\newtheorem{satz}{Theorem}[section]
\newtheorem{lemma}{Lemma}[section]
\newtheorem{bemerk1}{Remark}[section]
\newtheorem{prop}{Proposition}[section]
\newcommand{\iR}{\mathbb{R}}
\newcommand{\iN}{\mathbb{N}}
\newcommand{\oH}{\hspace*{0.39em}\raisebox{0.6ex}{\textdegree}\hspace{-0.72em}H}
\newcommand{\dd}{d\mu(\alpha)}
\newcommand{\ki}{k_{1}}
\newcommand{\kjr}{k_{1}(\Phi(r))}
\newcommand{\supp}{\mathop{\mathrm{supp}}\limits}
\newcommand*{\norm}[1]{\left\Vert{#1}\right\Vert}
\newcommand*{\abs}[1]{\left\vert{#1}\right\vert}
\newcommand*{\Om}{\Omega}
\newcommand*{\izi}{\int_{0}^{\infty}}
\newcommand*{\izt}{\int_{0}^{t}}
\newcommand*{\izj}{\int_{0}^{1}}
\newcommand*{\al}{\alpha}
\newcommand*{\vf}{\varphi}
\newcommand*{\ve}{\varepsilon}
\newcommand{\eqq}[2]{\begin{equation}  #1  \label{#2}\end{equation}    }
\newcommand{\hd}{\hspace{0.2cm}}
\newcommand{\m}[1]{\mbox{#1}}
\newcommand{\jd}{\frac{1}{2}}
\newcommand*{\esssup}{\mathop{\mathrm{ess\hspace{0.05cm}sup}}\limits}
\newcommand{\vdr}{\Phi(r)}
\newcommand{\eqnsl}[2]{
\begin{equation}
\label{#2}
\begin{split}
#1
\end{split}
\end{equation}
}
\newcommand{\vsi}{\varsigma}
\newcommand{\nic}[1]{ }
\DeclareMathOperator*{\esup}{ess\,sup}
\DeclareMathOperator*{\einf}{ess\,inf}
\def\XXint#1#2#3{{\setbox0=\hbox{$#1{#2#3}{\int}$ }
\vcenter{\hbox{$#2#3$ }}\kern-.6\wd0}}
\begin{document}
\begin{center}
{\bf\Large On the Harnack inequality for time-fractional and more general non-local in time subdiffusion equations}
\renewcommand{\thefootnote}{\fnsymbol{footnote}}
\footnote[1]{The first author was partly supported by the grant Sonata Bis UMO-2020/38/E/ST1/00469, National Science Centre, Poland. }
\end{center}
\vspace{0.7em}
\begin{center}
Katarzyna Ryszewska*, Rico Zacher${}^{\diamond}$
\end{center}

\vspace{1cm}
{\footnotesize \noindent \nic{{\bf address:} }*Department of Mathematics and Information Sciences\\
Warsaw University of Technology\\
Koszykowa 75, 00-662 Warsaw, Poland \\
Interdisciplinary Centre for Mathematical and Computational Modelling\\
University of Warsaw\\
Tyniecka 15/17, 02-630 Warsaw, Poland \\
Katarzyna.Ryszewska@pw.edu.pl, \\
Corresponding author\\
\\
${}^{\diamond}$Institute of Applied Analysis \\
University of Ulm \\
89069 Ulm, Germany \\
rico.zacher@uni-ulm.de \\
} \vspace{0.7em}
\begin{abstract}
In this paper we establish the Harnack inequality for globally positive local solutions to a general class of nonlocal in time subdiffusion equations in one space dimension, which includes time-fractional diffusion equations with time order less than one. It is already known that for these equations the classical Harnack inequality does not hold if the space dimension is greater than or equal to two. Here, we complete the analysis, by providing a positive result in one space dimension.
\end{abstract}
\vspace{0.7em}
\begin{center}
{\bf AMS subject classification:} 35R09, 45K05
\end{center}

\noindent{\bf Keywords:} Harnack inequality, Moser iterations,
fractional derivative, weak solutions,
subdiffusion equations, anomalous diffusion
\section{Introduction and main results}
Harnack inequalities play a significant role in the analysis of elliptic and parabolic partial differential equations. Since the Harnack inequality (in its original form) is a local estimate, establishing it for solutions to non-local problems appears to be challenging. In recent years, this subject has been attracting considerable attention. As to Harnack type results for parabolic problems with fractional Laplacian or even more general spatially non-local operators, there is already an extensive literature (see \cite{BL,CaSi,CK, Kass1, Kass2, Kass3, Str} and the references therein). Recently, in  \cite{Kass1}, Kassmann and Weidner established the parabolic Harnack inequality for globally nonnegative functions which are local weak solutions to nonlocal equations of the form
\eqq{
\partial_t u(t,x) +p.v.\int_{\mathbb{R}^{N}}(u(t,x)-u(t,y))K(t;x,y)dy = 0,
}{space}
where $K$ is a symmetric kernel satisfying
\[
\lambda (2-\al)|x-y|^{-N-\al} \leq K(t;x,y) \leq \Lambda (2-\al)|x-y|^{-N-\al}
\]
for some $\lambda,\Lambda > 0$, $\al \in [\al_0,2)$, $\al_0 \in (0,2)$.
Their purely analytic proof relies on a careful local boundedness estimate for
nonnegative subsolutions and an improved version of
the weak Harnack inequality for nonnegative supersolutions.
Here it is crucial that the unavoidable (bad) tail term in the
subsolution estimate is in such a form that it can be
controlled by the good tail term appearing in the weak Harnack
estimate.

In the present paper, we adapt many of the ideas from
\cite{Kass1} to answer the open question about the Harnack inequality for globally nonnegative local solutions to
non-local in time subdiffusion equations of the form
\begin{equation} \label{MProb}
\partial_t \big(k*(u-u_0)\big)-\mbox{div}\,\big(A(t,x)Du\big) =0,\quad t\in (0,T),\,x\in
\Omega.
\end{equation}
Here $T>0$, $\Omega$ denotes a bounded domain in $\mathbb{R}^{N}$, $N \in \iN$, $u_0$ plays the role of the initial datum and $A(t,x)=(a_{ij}(t,x))_{i,j=1}^N$ is a given bounded and uniformly elliptic matrix with merely measurable coefficients. By $f_1\ast f_2$ we understand the
convolution on the positive half-line with respect to time, that is
$(f_1\ast f_2)(t)=\int_0^t f_1(t-\tau)f_2(\tau)\,d\tau$, $t\ge 0$, and $Du$  denotes the gradient with respect to the spatial variables.
Concerning the kernel $k$, we assume, among others, that it is of type $\mathscr{PC}$, that is $k\in L_{1,\,loc}(\iR_+)$ is nonnegative and nonincreasing, and there
exists a nonnegative kernel $l\in L_{1,\,loc}(\iR_+)$ such that $k\ast l=1$ in
$(0,\infty)$. The most prominent example of kernel admissible in our setting is given by the
Riemann-Liouville kernel
\begin{equation} \label{mainexample}
    k(t)=\frac{t^{-\alpha}}{\Gamma(1-\alpha)},\quad t>0,\;
    \;\mbox{with some}\;\alpha\in (0,1),
\end{equation}
for which equation \eqref{MProb}, specialised further by taking $A=I\in \iR^{N\times N}$, becomes the time-fractional
diffusion equation
\begin{equation} \label{timefracdiff}
    \partial_t^\alpha(u-u_0)-\Delta u=0
    ,\quad t\in (0,T),\,x\in
\Omega,
\end{equation}
where $\partial_t^\alpha$ is the Riemann-Liouville
fractional derivative.

Although precise weak Harnack estimates for nonnegative weak supersolutions of \eqref{MProb} in all space dimensions have been recently obtained in \cite{krz} for a broad class of $\mathscr{PC}$-kernels, the question about the validity of the (full) Harnack inequality remained partially open. To the best of our knowledge, the problem of the Harnack inequality for equation (\ref{MProb}) has only been studied in the special case of the time-fractional diffusion equation \eqref{timefracdiff}. By exploiting properties of the
fundamental solution to \eqref{timefracdiff}, it was proven
in \cite{DKSZ} that the classical Harnack inequality fails to hold if $N \geq 2$, even for global nonnegative solutions. On the other hand, it is known that
the Harnack inequality does hold for globally nonnegative local solutions in the purely time-dependent
situation (the case $N=0$) with $k$ given by \eqref{mainexample}, see \cite{Z0d}. Further, in the case
$N=1$, the Harnack inequality has been established for
{\em global} nonnegative {\em solutions} of \eqref{timefracdiff}
that are given as a spatial convolution of the initial data and the fundamental solution to \eqref{timefracdiff}, see \cite[Theorem 2.7]{DKSZ}. However, the question whether globally nonnegative {\em local solutions} to \eqref{timefracdiff} with $N=1$ satisfy the Harnack inequality remained an open problem. In this paper, we will answer this question in the affirmative, even in the much
more general case of globally nonnegative local weak solutions to \eqref{MProb}.

These results indicate that the space dimension has a crucial influence on the validity of the Harnack estimate.
A similar critical dimension phenomenon has been observed in \cite{KSVZ} in the context of temporal decay estimates for solutions of the time-fractional and more general subdiffusion equations in $\iR^N$. Roughly speaking, for small dimensions, the decay behaviour is similar to the one for the classical heat equation, with increasing decay rate for higher dimensions. Then, above a certain dimension, the decay rate no longer changes with increasing dimension, in stark contrast to the classical parabolic case.

Before describing our main result in more detail, we want to mention some other earlier results on Harnack type and H\"older estimates for non-local in time subdiffusion equations.   The first De Giorgi-Nash-Moser type results were obtained in \cite{base} (weak Harnack inequality) and \cite{Zhol} (H\"older continuity) for subdiffusion equations of the
form \eqref{MProb} in the case of fractional
time dynamics, i.e.\ with $k$ given by
\eqref{mainexample}. Concerning H\"older regularity results for problems involving single-order fractional derivative in time as well as space nonlocality we refer to \cite{Caf} and \cite{MXZ}. A weak Harnack inquality for such problems was obtained in~\cite{JPY}. For subdiffusion
equations \eqref{MProb} with a distributed order fractional time derivative, weak Harnack and H\"older estimates were recently established in \cite{harnackdistr}.

The main result of this paper is a (full) Harnack inequality for globally nonnegative local weak solutions to \eqref{MProb} in the one-dimensional case, which in particular fills the gap in the aforementioned results on Harnack estimates for the time-fractional diffusion equation \eqref{timefracdiff}. Our strategy of proof is much inspired by ideas from \cite{Kass1} on the spatially non-local equation \ref{space}. The proof consists of establishing a suitable local sup-estimate for nonnegative subsolutions to \eqref{MProb} and an improved weak Harnack inequality for weak supersolutions. In comparison with \cite{Kass1}, instead of a spatial tail term we have to study a history term and an initial data term. Here, we work in the same setting as in \cite{krz}, where the weak Harnack inequality has been proven for \eqref{MProb}
with a general kernel. Some assumptions on the kernel
$k$ are somewhat different from \cite{krz}, but the
main examples from \cite{krz} are all admissible here, too.

We impose the following assumptions on the kernel~$k$:\vspace{0.2 cm}

{\em 1. $\mathscr{PC}$ property, regularity, monotonicity and convexity:}
\eqq{
 k,l \in L_{1,\,loc}(\iR_+)\cap C^{1}((0,\infty)), \hd  k,l \m{ are nonnegative and nonincreasing}, k \m{ is convex and } k*l=1.
\tag{K0}}{pc}

{\em 2. Higher integrability of $l$ and comparability with the average:} There exist $p_{0}>1$, $t_0 > 0$ and $\overline{c} > 1$ such that
\eqq{l \in L_{p_0}((0,t_0)) \hd \m{ and }\;
\frac{1}{t} \int_{0}^{t}\big(l(s)\big)^{p_0} ds \leq \overline{c} \big(l(t)\big)^{p_0},\quad 0<t\le t_0.\tag{K1}}{ak1}

{\em 3. Upper estimate for $k$ via its derivative:} There exist $\tilde{c}\in (0,1)$ and $\tilde{t}_0 > 0$ such that
\eqq{
- t\dot{k}(t) \geq \tilde{c} {k(t)},\quad t \in (0,\tilde{t}_0). \tag{K2}
}{ak2}

{\em 4. Upper estimates for the derivative of $k$:}
There exist positive constants $c_0,\omega > 0$ and a
nondecreasing function $c : (0,\infty) \rightarrow (0,\infty)$ such that for all $M > 0$
\eqq{-\dot{k}(y) \leq  \frac{c_0}{y(1*l)(y)} \hd \m{ and } -\dot{k}(xy) \leq c(M)y^{-\omega}[-\dot{k}(x)], \quad 0<y \leq 1, \hd 0< x(1-y) \leq M. \hd \tag{K3}}{asholder}

In comparison to \eqref{pc} in \cite{krz}, we assume in addition that $k$ is convex, a property which,
to the best of our knowledge, is satisfied by all known examples of $\mathscr{PC}$ kernels. Notice that the first part of the assumption (\ref{asholder}) is weaker than assumption (K3) in \cite{krz}.

Let us now introduce the basic assumptions imposed on $A$ and $u_0$. Recall that we consider the case $N=1$. Denoting $\Omega_T=(0,T)\times \Omega$ with $T>0$ and a bounded open interval $\Omega \subset \mathbb{R}$ we assume that
\begin{itemize}
\item [{\bf (H1)}] $A\in L_\infty(\Omega_T)$. Set $\Lambda:=\esssup_{\Omega_T}|A|$.
\item [{\bf (H2)}] There exists $\nu>0$ such that
$A(t,x) \ge \nu$ for a.a.\ $(t,x)\in\Omega_T$.
\item [{\bf (H3)}] $u_0\in L_2(\Omega)$.
\end{itemize}

A function $u$ is a {\em weak solution (subsolution,
supersolution)} of (\ref{MProb}) in $\Omega_T$ if $u$ belongs to the space
\[
Z:=\{v \in L_{2}((0,T);H^1_2(\Omega)):\, k*v \in C([0,T];L_{2}(\Omega)), (k*v)|_{t=0} = 0\}
\]
and for any nonnegative test function
\[
\eta\in \oH^{1,1}_2(\Omega_T):=H^1_2((0,T);L_2(\Omega))\cap
L_2((0,T);\oH^1_2(\Omega)) \quad\quad
\Big(\oH^1_2(\Omega):=\overline{C_0^\infty(\Omega)}\,{}^{H^1_2(\Omega)}\Big)
\]
with $\eta|_{t=T}=0$ there holds
\begin{equation} \label{BWF}
\int_{0}^{T} \int_\Omega \Big(-\eta_t [k\ast (u-u_0)]+
(ADu|D \eta)\Big)\,dxdt= \,(\le,\,\ge )\, 0.
\end{equation}
\begin{bemerk1} \label{notationR}
{\em
Note that throughout the paper, we use the notation with
the scalar product $(\cdot|\cdot)$ in $\iR^N$ for the elliptic part even if $N=1$. This is intended to make comments on the validity of certain estimates in the general case of $N\in \iN$ (with $\Omega$ being a bounded domain of $\iR^N$) more comprehensible. For general $N\in \iN$,
$A\in L_\infty(\Omega_T;\iR^{N\times N})$ with $\Lambda=\norm{A}_\infty$
and the ellipticity condition (H2) takes the following form: There exists $\nu>0$ such that
$(A(t,x)\xi|\xi)\ge \nu|\xi|^2$ for a.a.\ $(t,x)\in \Omega_T$ and all $\xi\in \iR^N$.
}
\end{bemerk1}
To formulate the main result we introduce the function
\eqq{
\ki(t) = \frac{1}{(1*l)(t)},\quad t>0,
}{kidef}
and set
\eqq{
r_0 = \left(\izi l(t)dt \right)^{\frac{1}{2}}.
}{r0}
Note that if $l$ is not integrable on $\mathbb{R}_{+}$, then $r_0 = \infty$. In what follows, writing $\frac{1}{r_0}$,  we use the convention $\frac{1}{\infty} = 0$.

In contrast to the case of fractional
time dynamics, i.e.\ $k$ is given by
\eqref{mainexample}, the problem with general kernel $k$
lacks a scaling property, which makes local regularity estimates based on De Giorgi or Moser iterations considerably more difficult to obtain.
This fundamental problem has been solved in \cite{krz} (see also \cite{harnackdistr}) by identifying a suitable function $\Phi(r)$ describing (up to a constant) the length of the time interval of a local time-space cylinder
given that the radius of the spatial ball scales like $r$.
\begin{lemma}[\cite{krz}]\label{fi}
There exists a unique strictly increasing function $\Phi \in C([0,r_0))\cap C^{1}((0,r_0))$, where $r_0$ is defined in (\ref{r0}), such that $\Phi(0) = 0$, $\lim\limits_{r\rightarrow r_0}\Phi(r) = \infty$ and
\eqq{
 \kjr = r^{-2}.
}{zn1}
\end{lemma}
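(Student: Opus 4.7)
The plan is to recast the implicit identity as an explicit inversion problem. Since $k_1(t) = 1/(1*l)(t)$, the equation $k_1(\Phi(r)) = r^{-2}$ is equivalent to $G(\Phi(r)) = r^{2}$, where
\[
G(t) := (1*l)(t) = \int_0^t l(s)\,ds, \qquad t \ge 0.
\]
I will construct $\Phi$ by setting $\Phi(r) := G^{-1}(r^{2})$ for $r \in [0, r_0)$, and then read off each of the stated properties from the analogous properties of $G$ and $G^{-1}$.

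The first substantive step is to establish that $G$ is a continuous, strictly increasing bijection from $[0,\infty)$ onto $[0, r_0^2)$, of class $C^1$ on $(0,\infty)$. By (K0), $G \in C([0,\infty))\cap C^1((0,\infty))$, $G(0) = 0$, and $G'(t) = l(t) \ge 0$. The definition (\ref{r0}) of $r_0$ yields $G(t) \to r_0^2$ as $t \to \infty$ (with $r_0^2 = \infty$ if $l \notin L^1(\iR_+)$). The delicate point is strict monotonicity, which amounts to $l(t) > 0$ for every $t > 0$. Since $l$ is nonincreasing, failure of this would produce some finite $t_*$ with $l \equiv 0$ on $[t_*,\infty)$, so that $r_0^2 = G(t_*) < \infty$. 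Then for any $t > t_*$, the identity $k * l = 1$ combined with monotonicity of $k$ gives
\[
1 = \int_0^{t_*} k(t-s)\, l(s)\,ds \le k(t-t_*)\, r_0^2,
\]
hence $k \ge 1/r_0^2$ on $(0,\infty)$. A short case analysis then produces a contradiction: if $k$ is bounded, $(k*l)(t) \le \|k\|_\infty G(t) \to 0$ as $t \to 0^+$, contradicting $(k*l)(t) = 1$; if $k$ is unbounded near $0$, saturation of the above inequality at $t = t_*$ (using that $k \ge 1/r_0^2$ and $\int_0^{t_*} l = r_0^2$) forces $k \equiv 1/r_0^2$ on $(0, t_*)$, which is incompatible with $k(0^+) = \infty$. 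Hence $l > 0$ on $(0,\infty)$ and $G$ has the announced bijection property.

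With this in place, $\Phi(r) := G^{-1}(r^2)$ inherits the required regularity: continuity on $[0, r_0)$, $\Phi(0) = 0$, strict monotonicity, and $\lim_{r \to r_0}\Phi(r) = \infty$ follow from the continuity and strict monotonicity of $G^{-1}$. The $C^1$-regularity on $(0, r_0)$ comes from the inverse function theorem applied to $G$, using $G'(\Phi(r)) = l(\Phi(r)) > 0$, and yields the explicit formula $\Phi'(r) = 2r/l(\Phi(r))$. The identity $k_1(\Phi(r)) = 1/G(\Phi(r)) = r^{-2}$ holds by construction, and uniqueness of $\Phi$ follows from the injectivity of $G$. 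The main obstacle throughout is the strict positivity of $l$ on $(0,\infty)$; the sketch above can be further tightened by invoking the convexity of $k$ from (K0), or, alternatively, by a Laplace-transform argument based on the relation $\widehat{k}(s)\widehat{l}(s) = 1/s$, which rules out simultaneously $l$ having compact support and $k*l \equiv 1$.
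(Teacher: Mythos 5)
Your construction $\Phi(r)=G^{-1}(r^2)$ with $G=1*l$ is correct and is essentially the intended argument: the paper itself does not prove Lemma \ref{fi} but quotes it from \cite{krz}, where the function $\Phi$ is obtained in exactly this way, by inverting $t\mapsto (1*l)(t)$ and using $k_1=1/(1*l)$, so your route matches the source. The only point worth making explicit is that in your positivity argument $t_*$ should be taken minimal (so that $l>0$ on $(0,t_*)$); then the saturation step legitimately yields $k\equiv 1/r_0^2$ on $(0,t_*)$ (and, by monotonicity, on all of $(0,\infty)$), which contradicts $(k*l)(t)=1$ as $t\to 0^+$ regardless of whether $k$ is bounded, so the case distinction is not even needed.
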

Let $\Phi$ be the function from Lemma~\ref{fi}. For $\delta\in(0,1)$, $t_0\ge 0$,
$\tau>0$, $r \in (0,r_0)$ and a ball $B(x_0,r)$, define the boxes
\eqnsl{
Q_-(t_0,x_0,r, \delta)&=(t_0,t_0+\delta\tau \vdr)\times B(x_0,\delta r),\\
Q_+(t_0,x_0,r,\delta)&=(t_0+(2-\delta)\tau \vdr,t_0+2\tau
\vdr)\times B(x_0,\delta r).
}{defQpm}
Note that we keep the notation $B(x_0,r)$ (or $B_{r}(x_0)$) for balls in one dimension as well as $Du$ for $u_x$, cf.\ Remark \ref{notationR}.

Now we are ready to present the main result of this paper.

\begin{satz} \label{strongHarnack}
Let $T>0$ and $\Omega\subset \iR$ be a bounded open
interval. Suppose the assumptions (H1)--(H3) and (\ref{pc})- (\ref{asholder}) are satisfied.
Then there exists $r^{*} \in (0,r_0)$ and  $\tau^{*} \in (0,1)$, $\tau^{*}=\tau^{*}(\nu,\Lambda,\overline{c})$ such that for every $\tau \in (0,\tau^{*})$ and for any $\delta \in (0,1)$, $r\in (0,r^*]$,  with $t_0-\delta\tau\vdr \geq 0$, $t_{0}+2\tau \vdr \leq T$ and any ball $B(x_{0},r) \subset \Omega$ and any positive weak
solution $u$ of (\ref{MProb}) in $(0,t_0+2\tau
\vdr)\times B(x_0, r)$ with $u_0\ge 0$ in $B(x_0,r)$
there holds
\[
\esup_{Q_-(t_0,x_0,r, \delta)} u
\le C \einf_{Q_+(t_0,x_0,r, \delta)} u ,
\]
where $C=C(\nu,\Lambda,\delta,\tau,\overline{c},\tilde{c},c_0,\omega)$.
\end{satz}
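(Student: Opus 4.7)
Following the strategy of Kassmann--Weidner \cite{Kass1}, adapted to the temporal nonlocality, the plan is to combine two sharp one-sided estimates on suitable sub-cylinders. The first is a local $L^{\infty}$-bound for nonnegative \emph{subsolutions} of (\ref{MProb}) on $Q_{-}(t_0,x_0,r,\delta)$ that carries an unavoidable ``bad'' correction term $\mathcal{T}_{\mathrm{bad}}$ encoding the initial datum $u_0$ and the history of $u$ on $(0,t_0)$. The second is an \emph{improved} weak Harnack inequality for nonnegative \emph{supersolutions} on $Q_{+}(t_0,x_0,r,\delta)$ whose left-hand side contains, in addition to the usual $L^{q_0}$-average of $u$ on an intermediate cylinder $\widetilde{Q}$, a ``good'' tail term $\mathcal{T}_{\mathrm{good}}$ of the same structure. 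By a careful choice of $\tau^{*}$ and $r^{*}$, together with the added convexity assumption in (\ref{pc}), the constants can be arranged so that $\mathcal{T}_{\mathrm{bad}}\le C\,\mathcal{T}_{\mathrm{good}}$; applying both estimates to the same positive weak solution $u$ and chaining them on $\widetilde{Q}$ then eliminates the nonlocal tails and yields the Harnack inequality.

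For the sup-bound on subsolutions I would run a Moser iteration on a geometrically shrinking sequence of cylinders converging to $Q_{-}$. Testing (\ref{BWF}) with $\eta^{2}u^{p-1}$ for a suitable space-time cut-off $\eta$ and using the identity $k\ast l=1$ together with the monotonicity and convexity of $k$, the memory contribution produces a nonnegative ``energy'' term plus a residual involving $k\ast u_0$ and a history integral of $u$ on $(0,t_0)$ weighted by $-\dot{k}$; these are collected into $\mathcal{T}_{\mathrm{bad}}$. The ellipticity (H2) handles the elliptic part, the function $\Phi$ from Lemma~\ref{fi} fixes the parabolic scaling so that one iteration step is self-improving, the higher integrability (\ref{ak1}) of $l$ is used in the reverse-H\"older/Sobolev step (the one-dimensional embedding $H^{1}\hookrightarrow L^{\infty}$ being considerably stronger than in higher dimensions), and (\ref{ak2})--(\ref{asholder}) ensure that the geometric series of iteration constants converges.

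For the improved weak Harnack estimate I would start from the weak Harnack inequality of \cite{krz} and reinspect its proof --- built on a logarithmic lemma for positive supersolutions, a Bombieri--Giusti-type covering, and the passage from $L^{-\varepsilon}$ to $L^{+\varepsilon}$ --- with the goal of retaining, rather than discarding, the nonnegative contributions coming from the values of $u$ on $(0,t_0)$ and from $k\ast u_0$. The convexity of $k$ in (\ref{pc}), an assumption absent in \cite{krz}, provides a monotonicity property of the associated quadratic form that lets these contributions be kept on the ``good'' side with the correct sign and a suitable weight. Balancing the two weights then determines the admissible ranges of $\tau$ and $r$.

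The main obstacle I expect is precisely this tail-matching: one needs that the coefficient of $\mathcal{T}_{\mathrm{bad}}$ in the sup-estimate is absorbed by the coefficient of $\mathcal{T}_{\mathrm{good}}$ in the improved weak Harnack inequality. The smallness of $\tau^{*}$ (so that the history kernel $-\dot{k}$ remains comparable across $Q_{-}$ and $Q_{+}$ via (\ref{asholder})) and the smallness of $r^{*}$ (keeping $\Phi(r)$ in the regime where (\ref{ak2}) is applicable) enter exactly at this point, and they force the precise dependence of $\tau^{*}$ on $\nu,\Lambda,\overline{c}$ stated in the theorem. Once the tails match, chaining the two estimates on $\widetilde{Q}$ yields the theorem in the standard way.
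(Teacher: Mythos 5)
Your proposal follows essentially the same route as the paper: the theorem is obtained by chaining a local sup-estimate for subsolutions whose right-hand side carries the initial-datum and history terms (Theorem \ref{subestimate}, proved by Moser iteration on the tail-perturbed function $u+b$ together with the one-dimensional embedding $H^1\hookrightarrow L^\infty$) with an improved weak Harnack inequality for supersolutions that controls exactly those terms by the infimum on $Q_+$ (Theorem \ref{imweakHarnack}). The only notable difference is one of implementation: rather than reworking the internals of the weak Harnack proof of \cite{krz}, the paper derives the control of the tail terms by a separate test-function computation with negative powers $\tilde{u}^{\beta}$, $\beta\in(-1,0)$, and then invokes the weak Harnack of \cite{krz} as a black box.
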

\begin{bemerk1}
{\em Note that in Theorem \ref{strongHarnack} we assume that $u$ is a weak solution {\em locally in space} and {\em globally to the left in time}. The global solution property in time can be replaced by a local
one. In fact, on a formal level our estimates only need the solution
property on a time-space cylinder which is local both in time and space. To make these estimates rigorous
would require further technicalities such as a
time localized regularization of the weak formulation. This
issue has already been discussed in earlier work, see e.g.\ \cite[page 102]{Zhol}. For the sake of convenience, we confine ourselves to consider
only weak solutions that are global to the left in time.
}
\end{bemerk1}

Theorem \ref{strongHarnack} applies, among others, to the following pairs of kernels:
\begin{itemize}
\item the time fractional derivative with exponential weight
\[
k(t) = \frac{t^{-\al}}{\Gamma(1-\al)}e^{-\gamma t}, \hd l(t) = \frac{t^{\al-1}}{\Gamma(\al)}e^{-\gamma t} + \gamma \izt e^{-\gamma \tau}\frac{\tau^{\al-1}}{\Gamma(\al)}d\tau, \hd \hd \gamma \geq 0, \hd \al \in (0,1),
\]
\item the distributed order fractional derivative
\[
k(t) = \izj \frac{t^{-\al}}{\Gamma(1-\al)} d\mu(\al), \hd l(t) = \frac{1}{\pi}\izi e^{-pt}\frac{\izj p^{\al} \sin (\pi \al)\dd}{(\izj p^{\al} \sin (\pi \al)\dd)^{2} + (\izj p^{\al} \cos (\pi \al)\dd)^{2}}dp,
\]
where $\mu$ is a finite sum $d\mu=\sum_{n=1}^M q_nd\delta(\cdot-\alpha_n)+wd\lambda$, $\alpha_n\in (0,1)$, $q_n\ge 0$ for all $n=1,\ldots,M$,
$w\in L_1((0,1))$ is nonnegative and $\mu \not \equiv 0$ ($\delta(\cdot - \al_{n})$ is the Dirac measure at $\alpha_n$ and $\lambda$ denotes the one-dimensional Lebesgue measure),
\item the fractional derivative with $l$ decaying exponentially
\[
k(t) =\frac{t^{\al-1}}{\Gamma(\al)}e^{-\gamma t} + \gamma \izt e^{-\gamma \tau}\frac{\tau^{\al-1}}{\Gamma(\al)}d\tau, \hd l(t) = \frac{t^{-\al}}{\Gamma(1-\al)}e^{-\gamma t}, \hd \hd \gamma \geq 0, \hd \al \in (0,1),
\]
\item the distributed order case with switched kernels under the assumption $\supp \mu \subset [0,\al_{*}], \al_{*} \in (0,1)$
\[
k(t) = \frac{1}{\pi}\izi e^{-pt}\frac{\izj p^{\al} \sin (\pi \al)\dd}{(\izj p^{\al} \sin (\pi \al)\dd)^{2} + (\izj p^{\al} \cos (\pi \al)\dd)^{2}}dp, \hd l(t) = \izj \frac{t^{-\al}}{\Gamma(1-\al)} d\mu(\al).
\]
\end{itemize}
For a proof that the examples listed above satisfy the assumptions (\ref{pc}) - (\ref{asholder}) we refer to \cite{krz}.

The paper is organised as follows. In Section 2 we recall some auxiliary results on Moser iterations, parabolic embeddings and properties of $\mathscr{PC}$ kernels that follow from assumption (\ref{ak1}). Section 3 is devoted to the proof of an improved weak Harnack inequality in one dimension. In Section 4 we establish local $\sup$-estimates for subsolutions. The main result, Theorem \ref{strongHarnack}, follows directly from Theorem \ref{imweakHarnack} and Theorem \ref{subestimate}.




\section{ Preliminaries}
\subsection{Iterations and embedding}
We begin this preliminary section with a simple but important iteration lemma from \cite{Giaquinta}.
\begin{lemma}\cite[Lemma V.3.1]{Giaquinta}\label{geometric}
Assume that $f$ is a nonnegative,
real-valued, bounded function defined on an interval $[\tau_1,\tau_2] \subset \mathbb{R}_{+}$. Assume
further that for all $ \tau_1\leq  s < t \leq \tau_2$ we have
\[
f(s) \leq \theta f(t) + A_1(t-s)^{-\al} + A_2(t-s)^{-\beta} + B
\]
for some nonnegative constants $A,B,\al,\beta$ and $\theta \in [0,1)$. Then,
\[
f(\tau_1) \leq c(\al,\theta)[A_1(\tau_2-\tau_1)^{-\al}+A_2(\tau_2-\tau_1)^{-\beta}+B].
\]
\end{lemma}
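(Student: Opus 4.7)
The plan is to iterate the assumed inequality along a geometrically shrinking sequence of interpolation points converging to $\tau_2$, in the spirit of the classical Giaquinta argument. Since $\theta \in [0,1)$, one may fix $\lambda \in (\theta^{1/\max(\al,\beta)},1)$, so that simultaneously $\theta\lambda^{-\al} < 1$ and $\theta\lambda^{-\beta} < 1$; such $\lambda$ depends only on $\al,\beta,\theta$. I would then set $t_0 := \tau_1$ and $t_{i+1} := t_i + (1-\lambda)\lambda^i(\tau_2-\tau_1)$ for $i\ge 0$, so that the telescoping sum of a geometric series gives $t_i \nearrow \tau_2$ and $t_{i+1}-t_i = (1-\lambda)\lambda^i(\tau_2-\tau_1)$.

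Applying the hypothesis with $s=t_i$ and $t=t_{i+1}$ and iterating $k$ times would yield
\[
f(\tau_1) \leq \theta^k f(t_k) + \sum_{i=0}^{k-1}\theta^i\Big[A_1(1-\lambda)^{-\al}\lambda^{-i\al}(\tau_2-\tau_1)^{-\al} + A_2(1-\lambda)^{-\beta}\lambda^{-i\beta}(\tau_2-\tau_1)^{-\beta} + B\Big].
\]
Since $f$ is bounded on $[\tau_1,\tau_2]$ and $\theta^k \to 0$, the residual term $\theta^k f(t_k)$ vanishes as $k\to\infty$. By the choice of $\lambda$, the two geometric series $\sum_i (\theta\lambda^{-\al})^i$ and $\sum_i (\theta\lambda^{-\beta})^i$ converge to finite constants depending only on $\al,\beta,\theta$, while $\sum_i \theta^i = (1-\theta)^{-1}$ absorbs the $B$ contribution. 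Passing to the limit $k\to\infty$ then produces the claimed bound with a constant $c=c(\al,\beta,\theta)$.

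The only subtle point is handling the two distinct exponents $\al$ and $\beta$ with a single geometric ratio $\lambda$: I cover this by picking $\lambda$ just above $\theta^{1/\max(\al,\beta)}$, which simultaneously makes both critical ratios admissible. Boundedness of $f$ is used only to dispose of $\theta^k f(t_k)$ in the limit, and it would fail without this hypothesis; every other step is a finite sum of nonnegative terms and therefore poses no convergence issue.
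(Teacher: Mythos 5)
Your proof is correct and is precisely the classical iteration argument behind Giaquinta's Lemma V.3.1, which the paper only cites rather than reproves: iterate along $t_{i+1}=t_i+(1-\lambda)\lambda^i(\tau_2-\tau_1)$ with $\lambda\in(\theta^{1/\max(\alpha,\beta)},1)$ so that $\theta\lambda^{-\alpha},\theta\lambda^{-\beta}<1$, sum the geometric series, and use boundedness of $f$ to kill $\theta^k f(t_k)$. The only cosmetic points are that the resulting constant depends on $\beta$ as well as $\alpha$ and $\theta$ (the paper's ``$c(\alpha,\theta)$'' is a slight abuse of notation) and that the degenerate case $\alpha=\beta=0$ should be dispatched separately (it is trivial, since the power terms are then constants); neither affects correctness.
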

In the estimates for subsolutions we apply Moser's iteration technique in spaces of functions with mixed integrability in time and space. We therefore introduce the following notation. Let
$U_\sigma = U_\sigma^{t} \times U_\sigma^{x}$, $0<\sigma\le 1$, be a collection
of measurable subsets of the measure space $U_1$ endowed with the product measure
$\mu=\mu_t \otimes \mu_x$ such that $U_{\sigma'}^t\subset
U_\sigma^t$ and $U_{\sigma'}^x\subset
U_\sigma^x$ if $\sigma'\le \sigma$. For $p\in (0,\infty)$, $q \in (0,\infty]$ and
$0<\sigma\le 1$, by
$L_{p,q}(U_\sigma,d\mu)$ we mean the space of all $\mu$-measurable functions
$f:U_\sigma \rightarrow \iR$ for which

\[
\|f\|_{L_{p,q}(U_\sigma)}:=\left(\int_{U_\sigma^{t}}(\int_{U_\sigma^x}|f|^q\,d\mu_x)^{p/q}d\mu_t\right)^{\frac{1}{p}}<\infty \hd \m{ if } \hd q<\infty \hd
\]
and
\[
\|f\|_{L_{p,\infty}(U_\sigma)}:=\left(\int_{U_\sigma^{t}}(\esssup_{x \in U_{\sigma}^{x}}|f|)^{p}d\mu_t\right)^{\frac{1}{p}}<\infty.
\]

Below we present a Moser iteration lemma which may be proven analogously to \cite{base}[Lemma 2.1] and \cite{harnackdistr}[Lemma 2.7, Lemma 2.8], see also \cite{CZ}, \cite{AS}.

\begin{lemma} \label{moserit1}
Let $\kappa>1$, $\beta_1,\beta_2 \geq 1$, $\bar{p}\ge 1$, $C\ge 1$, $\delta \in (0,1)$ and $a>0$. Suppose
$f$ is a $\mu$-measurable function on $U_1$ such that
\begin{equation} \label{mositer1}
\|f\|_{L_{\beta_1 \gamma\kappa,\beta_2\gamma\kappa}(U_{\sigma'})}\le
\Big(\frac{C(1+\gamma)^{a}}{(\sigma-\sigma')^{a}}\Big)^{1/\gamma}\,\|f\|_{L_{\beta_1 \gamma, \beta_2 \gamma}(U_{\sigma})},\quad \mbox{for every }
\; \delta<\sigma'<\sigma\le 1,\;\gamma>0.
\end{equation}
Then there exists a constant $M=M(a,\kappa,\bar{p})$  such that
\[
\esup_{U_{\delta}}{|f|} \le
\Big(\frac{M C^{\frac{\kappa}{\kappa-1}}}{(1-\delta)^{\gamma_0}}\Big)^{1/p}
\|f\|_{L_{\beta_1 p,\beta_2 p}(U_1)}\quad \mbox{for all} \;p\in
(0,\bar{p}] ,\]
where $\gamma_{0}= \frac{a \kappa }{\kappa -1}$.
\end{lemma}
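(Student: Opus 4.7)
The plan is to iterate hypothesis \eqref{mositer1} with a geometric sequence of exponents and a geometric sequence of shrinking radii. Fix $p\in(0,\bar{p}]$ and set, for $n=0,1,2,\dots$,
\[
p_n := p\,\kappa^n,\qquad \sigma_n := \delta+(1-\delta)2^{-n},
\]
so that $p_n\to\infty$, $\sigma_0=1$, $\sigma_n\downarrow\delta$, and $\sigma_n-\sigma_{n+1}=(1-\delta)\,2^{-n-1}$. Applying \eqref{mositer1} with $\gamma=p_n$, $\sigma=\sigma_n$, $\sigma'=\sigma_{n+1}$ and denoting $\Phi_n:=\|f\|_{L_{\beta_1 p_n,\beta_2 p_n}(U_{\sigma_n})}$ produces the recursion
\[
\Phi_{n+1}\le \bigl(C(1+p_n)^{a}(1-\delta)^{-a}\,2^{a(n+1)}\bigr)^{1/p_n}\,\Phi_n.
\]
Iterating from $0$ to $n-1$ gives
\[
\Phi_n\le \Phi_0\cdot C^{S_n}\,(1-\delta)^{-aS_n}\,2^{aT_n}\,\prod_{k=0}^{n-1}(1+p_k)^{a/p_k},
\]
with $S_n:=\sum_{k=0}^{n-1}p_k^{-1}$ and $T_n:=\sum_{k=0}^{n-1}(k+1)/p_k$.

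The next step is to bound the three series uniformly in $p\in(0,\bar{p}]$ so that the final constant depends only on $a$, $\kappa$ and $\bar{p}$. Direct geometric summation yields $S:=\lim_n S_n=\frac{\kappa}{(\kappa-1)p}=\frac{\gamma_0}{ap}$ and $T:=\lim_n T_n=\frac{1}{p}\cdot\frac{\kappa^2}{(\kappa-1)^2}$. For the remaining product, the elementary inequality $\log(1+x)\le\sqrt{x}$ (valid for all $x\ge 0$) gives
\[
\sum_{k=0}^{\infty}\frac{\log(1+p_k)}{p_k}\le \sum_{k=0}^{\infty}\frac{1}{\sqrt{p\,\kappa^k}}=\frac{1}{\sqrt{p}}\cdot\frac{\sqrt{\kappa}}{\sqrt{\kappa}-1}\le \frac{M_1}{p},
\]
where in the last step we used $1/\sqrt{p}=\sqrt{p}/p\le\sqrt{\bar{p}}/p$; this absorbs the $\sqrt{p}$ loss into $1/p$ at the cost of a constant $M_1=M_1(\kappa,\bar{p})$. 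Combining these estimates with $aS=\gamma_0/p$ and $C^{S}=(C^{\kappa/(\kappa-1)})^{1/p}$ produces
\[
\Phi_n\le \Bigl(M\,C^{\kappa/(\kappa-1)}(1-\delta)^{-\gamma_0}\Bigr)^{1/p}\,\Phi_0\quad\text{for every } n\ge 0,
\]
with $M=M(a,\kappa,\bar{p})$ and $\Phi_0=\|f\|_{L_{\beta_1 p,\beta_2 p}(U_1)}$.

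Passing to the limit $n\to\infty$ on the left-hand side uses the monotonicity $U_\delta\subset U_{\sigma_n}$ together with the standard fact that on a finite-measure product space the $L_{s,q}$-norms converge to the essential supremum as $\min(s,q)\to\infty$; consequently
\[
\esup_{U_\delta}|f|\le \liminf_{n\to\infty}\|f\|_{L_{\beta_1 p_n,\beta_2 p_n}(U_\delta)}\le \liminf_{n\to\infty}\Phi_n,
\]
which is the desired bound. The main technical obstacle is the uniform-in-$p$ control of the three series when $p$ is small: the naive estimate $\log(1+p_k)/p_k\le 1$ would contribute a factor of order $\log(1/p)$ that is troublesome to absorb, so the square-root majorant for $\log(1+x)$ together with the trivial observation $\sqrt{p}\le\sqrt{\bar{p}}$ is essential in order to write the final constant as $M^{1/p}$ with $M=M(a,\kappa,\bar{p})$.
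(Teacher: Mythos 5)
Your proof is correct: the recursion obtained from \eqref{mositer1} with $\gamma=p\kappa^n$ and $\sigma_n=\delta+(1-\delta)2^{-n}$ is set up properly, the three series are summed correctly (with the $\log(1+x)\le\sqrt{x}$ and $\sqrt{p}\le\sqrt{\bar p}$ device giving the uniform-in-$p$ constant $M(a,\kappa,\bar p)$ and the exponent $C^{\kappa/(\kappa-1)}$, $(1-\delta)^{-\gamma_0}$ as claimed), and the passage to the essential supremum uses only the lower-semicontinuity direction $\liminf_{n}\|f\|_{L_{\beta_1 p_n,\beta_2 p_n}(U_\delta)}\ge \esup_{U_\delta}|f|$, which is valid here. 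The paper itself does not prove this lemma but refers to \cite{base}, \cite{harnackdistr}, \cite{CZ}, \cite{AS}, and your argument is essentially the same standard Moser iteration carried out in those references.
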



$\mbox{}$

Finally we recall a parabolic embedding result in one space dimension. It follows from the Gagliardo-Nirenberg and H\"older's inequality, cf.\ for example, \cite{krz}, \cite[Section 2]{Za2}.

\begin{prop}
Let  $T>0$ and $\Omega$ be a bounded interval in $\iR$. For $1<p\le
\infty$ we define the space
\begin{equation} \label{Vdef}
V_p:=V_p((0,T)\times \Omega)=L_{2p}((0,T);L_2(\Omega))\cap
L_2((0,T);H^1_2(\Omega)),
\end{equation}
endowed with the norm
\[
\|u\|_{V_p((0,T)\times \Omega)}:=\|u\|_{L_{2p}((0,T);L_2(\Omega))}
+\|Du\|_{L_2((0,T);L_2(\Omega))}.
\]

Then, if
\[
p'\left(1-\frac{2}{a}\right) -\frac{1}{b} = \frac{1}{2},
\]
where $p'=\frac{p}{p-1}$ and
\[
a\in \left[ \frac{4p}{p+1},2p \right], \hd b\in[2, \infty],
\]
then
$V_p\hookrightarrow
L_{a}((0,T); L_{b}(\Omega))$ and there is $C=C(b)$ such that for all
$u\in V_{p}\cap L_{2}((0,T);\oH^1_2 (\Om))$
\eqq{
\norm{u}_{L_{a}((0,T); L_{b}(\Omega))} \leq C\norm{u}_{L_{2p}((0,T); L_{2}(\Omega))}^{1-\theta}\norm{D u}_{L_{2}((0,T); L_{2}(\Omega))}^{\theta} 
}{sobolev}
with $\theta = \frac{1}{2}-\frac{1}{b}$.
\end{prop}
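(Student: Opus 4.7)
The plan is to apply the one-dimensional Gagliardo-Nirenberg inequality in the spatial variable and then Hölder's inequality in time, choosing the Hölder exponents so that the stated algebraic relation on $(a,b,p)$ is precisely the conjugacy condition.

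First, for a.e.\ $t\in(0,T)$ the slice $u(\cdot,t)$ lies in $\oH^1_2(\Omega)$, so the one-dimensional Sobolev embedding $\oH^1_2(\Omega)\hookrightarrow L_\infty(\Omega)$ combined with the bound $\norm{v}_{L_\infty(\Omega)}\leq C\norm{v}_{L_2(\Omega)}^{1/2}\norm{Dv}_{L_2(\Omega)}^{1/2}$ and the elementary interpolation $\norm{v}_{L_b(\Omega)}\leq \norm{v}_{L_2(\Omega)}^{2/b}\norm{v}_{L_\infty(\Omega)}^{1-2/b}$ yields the Gagliardo-Nirenberg inequality
\[
\norm{u(\cdot,t)}_{L_b(\Omega)}\leq C(b)\,\norm{u(\cdot,t)}_{L_2(\Omega)}^{1-\theta}\norm{Du(\cdot,t)}_{L_2(\Omega)}^{\theta}, \qquad \theta=\tfrac{1}{2}-\tfrac{1}{b},
\]
valid for every $b\in[2,\infty]$.

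Next I would raise this to the power $a$, integrate over $(0,T)$, and apply Hölder's inequality in time with conjugate exponents $q_1,q_2\in[1,\infty]$ chosen so that $(1-\theta)aq_1=2p$ and $\theta aq_2=2$. This delivers
\[
\norm{u}_{L_a((0,T);L_b(\Omega))}^a\leq C(b)^a\,\norm{u}_{L_{2p}((0,T);L_2(\Omega))}^{(1-\theta)a}\norm{Du}_{L_2((0,T);L_2(\Omega))}^{\theta a},
\]
and the $a$-th root is exactly (\ref{sobolev}). The conjugacy condition $\tfrac{1}{q_1}+\tfrac{1}{q_2}=1$ reduces to $\tfrac{(1-\theta)a}{2p}+\tfrac{\theta a}{2}=1$ which, after substituting $\theta=\tfrac{1}{2}-\tfrac{1}{b}$ and using $1-\tfrac{1}{p}=\tfrac{1}{p'}$, rearranges precisely to the hypothesis $p'(1-\tfrac{2}{a})-\tfrac{1}{b}=\tfrac{1}{2}$.

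Finally, the requirements $q_1,q_2\geq 1$ translate into $(1-\theta)a\leq 2p$ and $\theta a\leq 2$; as $b$ sweeps $[2,\infty]$ (so $\theta\in[0,\tfrac{1}{2}]$) these bounds carve out exactly the range $a\in[\tfrac{4p}{p+1},2p]$, with the endpoints $a=2p$ and $a=\tfrac{4p}{p+1}$ corresponding to $b=2$ and $b=\infty$, respectively. The whole argument is a routine two-step interpolation; the only genuine content is the arithmetic identity showing that the paper's algebraic relation on $(a,b,p)$ is exactly what makes Hölder's inequality applicable with the above choice of $q_1,q_2$.
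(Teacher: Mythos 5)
Your proof is correct and is exactly the route the paper indicates (it gives no written proof, only the attribution ``Gagliardo--Nirenberg and H\"older's inequality'' with references): the spatial Gagliardo--Nirenberg estimate with $\theta=\tfrac12-\tfrac1b$ followed by H\"older in time with $q_1,q_2$ chosen as you do, and your verification that the conjugacy condition $\tfrac{(1-\theta)a}{2p}+\tfrac{\theta a}{2}=1$ is equivalent to $p'(1-\tfrac2a)-\tfrac1b=\tfrac12$ is accurate. The zero-trace hypothesis $u\in L_2((0,T);\oH^1_2(\Omega))$ is indeed what makes the multiplicative form $\norm{v}_{L_\infty}\le C\norm{v}_{L_2}^{1/2}\norm{Dv}_{L_2}^{1/2}$ available on the bounded interval, so no gap there.
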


\subsection{Properties of the kernels} \label{SecYos}
We begin this subsection by recalling two simple but useful lemmas, see \cite{harnackdistr}, \cite{krz}, and \cite{base} in case $l(t)=g_\alpha(t):=\frac{t^{\al-1}}{\Gamma(\al)}$. By ${}_0H^1_1([0,T])$ we mean the space of all $H^1_1([0,T])$-functions that vanish at $0$.
\begin{lemma} \label{comm}
Let $T>0$ and $l\in L_{1}((0,T))\cap C^1((0,T))$ be nonnegative and nonincreasing. Suppose that $v\in
{}_0H^1_1([0,T])$ and $\varphi\in C^1([0,T])$. Then
\[
\big(l\ast(\varphi \dot{v}))(t)=\varphi(t)(l\ast
\dot{v})(t)+\int_0^t
v(\sigma)\partial_\sigma\big(l(t-\sigma)
[\varphi(t)-\varphi(\sigma)]\big)\,d\sigma,\;\;\mbox{a.a.}\;t\in
(0,T).
\]
If in addition $v$ is nonnegative and $\varphi$ is nondecreasing
there holds
\[
\big(l\ast(\varphi \dot{v}))(t)\ge \varphi(t)(l\ast
\dot{v})(t)-\int_0^t l(t-\sigma)
\dot{\varphi}(\sigma)v(\sigma)\,d\sigma,\;\;\mbox{a.a.}\;t\in
(0,T).
\]
\end{lemma}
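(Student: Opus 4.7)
The starting point is the algebraic splitting $\varphi(\sigma)=\varphi(t)-(\varphi(t)-\varphi(\sigma))$ inside the convolution. Writing out the definition and using linearity,
\[
\big(l*(\varphi\dot v)\big)(t)=\varphi(t)\int_0^t l(t-\sigma)\dot v(\sigma)\,d\sigma-\int_0^t l(t-\sigma)\bigl[\varphi(t)-\varphi(\sigma)\bigr]\dot v(\sigma)\,d\sigma.
\]
The first term on the right is exactly $\varphi(t)(l*\dot v)(t)$. For the second term, the plan is to integrate by parts in $\sigma$ against $\dot v$, using $F(\sigma):=l(t-\sigma)[\varphi(t)-\varphi(\sigma)]$ as the factor multiplying $\dot v$. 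Then the identity in the first assertion follows provided the boundary terms
$-F(\sigma)v(\sigma)\big|_0^t$ vanish. At $\sigma=0$ this is immediate because $v(0)=0$ (as $v\in{}_0H^1_1([0,T])$), and at $\sigma=t$ one exploits $\varphi\in C^1$ to get $\varphi(t)-\varphi(\sigma)=O(t-\sigma)$, whereas monotonicity of $l$ together with $l\in L_{1,\mathrm{loc}}$ gives $s\, l(s)\le 2\int_{s/2}^{s}l(\tau)\,d\tau\to 0$ as $s\to 0^+$. Since $v$ is absolutely continuous and hence bounded near $t$, this forces $F(\sigma)v(\sigma)\to 0$ as $\sigma\to t^-$.

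The only genuine technical point is that $l$ may be singular at zero (think of $l(t)=t^{\alpha-1}/\Gamma(\alpha)$), so integration by parts needs to be justified. The clean way is to approximate: replace $l$ by the bounded truncation $l_n(s):=\min\{l(s),n\}\in C^1$ on $(0,T)$, carry out the computation with $l_n$ in place of $l$ (where no singularity issues arise), and then pass to the limit $n\to\infty$ using dominated convergence, which applies thanks to $v\in L_\infty(0,T)$, $\dot v\in L_1(0,T)$, $\varphi\in C^1$, and the estimate $|F(\sigma)|\le\|\dot\varphi\|_\infty (t-\sigma)\,l(t-\sigma)$.

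For the second assertion, one simply differentiates the product inside the integrand of the identity just proved:
\[
\partial_\sigma\bigl(l(t-\sigma)[\varphi(t)-\varphi(\sigma)]\bigr)=-\dot l(t-\sigma)[\varphi(t)-\varphi(\sigma)]-l(t-\sigma)\dot\varphi(\sigma).
\]
Substituting yields
\[
\big(l*(\varphi\dot v)\big)(t)=\varphi(t)(l*\dot v)(t)-\int_0^t v(\sigma)\dot l(t-\sigma)[\varphi(t)-\varphi(\sigma)]\,d\sigma-\int_0^t l(t-\sigma)\dot\varphi(\sigma)v(\sigma)\,d\sigma.
\]
Under the additional sign assumptions, $v\ge 0$, $-\dot l\ge 0$ (since $l$ is nonincreasing), and $\varphi(t)-\varphi(\sigma)\ge 0$ for $\sigma\le t$ (since $\varphi$ is nondecreasing), so the middle integral is nonnegative and can be discarded to produce the desired lower bound.

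\textbf{Main obstacle.} No deep difficulty is expected; the only delicate step is making the integration by parts rigorous in view of the potential singularity of $l$ at the origin. Handling this via the truncation $l_n=\min\{l,n\}$, together with the bound $s\,l(s)\to 0$ forced by $l\in L_{1,\mathrm{loc}}$ and monotonicity, should suffice to validate passage to the limit and to annihilate the boundary contribution at $\sigma=t$.
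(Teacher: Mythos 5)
The paper gives no proof of Lemma \ref{comm} at all (it is recalled from \cite{harnackdistr}, \cite{krz}, \cite{base}), and your argument is correct and is essentially the standard one behind those references: split $\varphi(\sigma)=\varphi(t)-[\varphi(t)-\varphi(\sigma)]$, integrate by parts in $\sigma$, kill the boundary terms via $v(0)=0$ and $s\,l(s)\le 2\int_{s/2}^{s}l(\tau)\,d\tau\to 0$, and for the inequality drop the nonnegative term coming from $-\dot l\ge 0$, $v\ge 0$, $\varphi(t)-\varphi(\sigma)\ge 0$. Two cosmetic points only: $\min\{l,n\}$ is Lipschitz/piecewise $C^1$ rather than $C^1$ (which is all the truncated integration by parts needs), and in the dominated-convergence step one should record that $\int_0^t s\,[-\dot l(s)]\,ds<\infty$, which follows by integrating by parts using $l\in L_1$ and $s\,l(s)\to 0$; neither affects the validity of your proof.
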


\begin{lemma} \label{comm2}
Let $T>0$, $k\in H^1_1([0,T])$, $v\in L_1([0,T])$, and $\varphi\in
C^1([0,T])$. Then
\[
\varphi(t)\,\frac{d}{dt}\,(k\ast v)(t)=\,\frac{d}{dt}\,\big(k\ast
[\varphi v]\big)(t)+\int_0^t
\dot{k}(t-\tau)\big(\varphi(t)-\varphi(\tau)\big)v(\tau)\,d\tau,\;\;\mbox{a.a.}\;t\in
(0,T).
\]
\end{lemma}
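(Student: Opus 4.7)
The identity is essentially the statement that multiplying by $\varphi(t)$ can be pulled inside a convolution up to a commutator which is expressible via $\dot{k}$. The approach is to reduce it to the standard differentiation-of-convolution formula: for $\kappa \in H^1_1([0,T])$ and $f \in L_1([0,T])$ one has $\kappa * f \in H^1_1([0,T])$ with
\[
\frac{d}{dt}(\kappa * f)(t) = \kappa(0^+) f(t) + (\dot{\kappa} * f)(t) \quad \text{for a.a. } t \in (0,T),
\]
where $\kappa(0^+)$ denotes the value at zero of the absolutely continuous representative of $\kappa$. This is a routine consequence of the identity $\kappa(t-\tau) = \kappa(0^+) + \int_0^{t-\tau}\dot{\kappa}(s)\,ds$ combined with Fubini's theorem: one checks that the right-hand side, integrated from $0$ to $t$, reproduces $(\kappa * f)(t)$, so the two absolutely continuous functions coincide and their derivatives agree a.e.

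Apply this formula first with $\kappa = k$ and $f = v$, and multiply the resulting equality by $\varphi(t)$:
\[
\varphi(t)\frac{d}{dt}(k*v)(t) = k(0^+)\varphi(t) v(t) + \varphi(t)(\dot{k}*v)(t).
\]
Next, apply the same formula with $f$ replaced by $\varphi v$; this is legitimate since $\varphi \in C^1([0,T])$ is bounded, so $\varphi v \in L_1([0,T])$. This yields
\[
\frac{d}{dt}\bigl(k*[\varphi v]\bigr)(t) = k(0^+)\varphi(t) v(t) + \bigl(\dot{k}*[\varphi v]\bigr)(t).
\]

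Subtracting the two identities cancels the $k(0^+)\varphi(t)v(t)$ contribution and reduces the claim to
\[
\varphi(t)(\dot{k}*v)(t) - \bigl(\dot{k}*[\varphi v]\bigr)(t) = \int_0^t \dot{k}(t-\tau)\bigl(\varphi(t)-\varphi(\tau)\bigr) v(\tau)\,d\tau,
\]
which follows by a direct algebraic manipulation of the two convolution integrals, since $\varphi(t)$ can be brought inside the first one and combined with $-\varphi(\tau)$ from the second. The only delicate point is the justification of the preliminary differentiation formula; this is a standard property of Sobolev convolutions and relies only on the absolute continuity of $k$ (granted by $k \in H^1_1([0,T])$) together with Fubini. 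Once that is in place, the remainder of the argument is purely algebraic, so no additional technical obstacle is anticipated.
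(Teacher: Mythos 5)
Your proof is correct: the differentiation formula $\frac{d}{dt}(\kappa\ast f)(t)=\kappa(0^+)f(t)+(\dot{\kappa}\ast f)(t)$ for $\kappa\in H^1_1([0,T])$, $f\in L_1([0,T])$ is justified exactly as you describe (via $\kappa(t-\tau)=\kappa(0^+)+\int_0^{t-\tau}\dot{\kappa}(s)\,ds$ and Fubini), and applying it to $k\ast v$ and $k\ast[\varphi v]$ and subtracting gives the claimed identity after the elementary rearrangement of the two convolutions. The paper does not prove this lemma itself but recalls it from earlier works, where the argument is essentially the same standard computation, so there is nothing to object to.
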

Let us now recall important properties of the kernels $k,l,k_1$ under the assumptions \eqref{pc} and \eqref{ak1}. For the proofs we refer to \cite[Section 2]{krz}.

\begin{lemma}\label{kernels}
Assume \eqref{pc} and \eqref{ak1}. Then
\[
k(t) \leq  k_1(t),\quad t>0,
\]
and
\[
(1*k)(t) \leq \overline{c} t k_1(t),\quad t \in (0,t_0),
\]
where $t_0$ and $\overline{c}$ come from the assumption (\ref{ak1}).
\end{lemma}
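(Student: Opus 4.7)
The plan is to derive both bounds directly from the identity $k*l=1$ together with the monotonicity built into \eqref{pc}. For the first inequality, I will start from $1=(k*l)(t)=\int_0^t k(t-s)l(s)\,ds$ and use that $k$ is nonincreasing, so $k(t-s)\ge k(t)$ on $[0,t]$. This gives $1\ge k(t)\int_0^t l(s)\,ds=k(t)(1*l)(t)=k(t)/k_1(t)$, which rearranges to $k(t)\le k_1(t)$.

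For the second inequality, the key trick is to convolve $k*l=1$ with the constant function $1$ to obtain the associative identity $\big((1*k)*l\big)(t)=(1*k*l)(t)=(1*1)(t)=t$. Since $k\ge 0$, the primitive $1*k$ is nondecreasing, so splitting the convolution at $t/2$ and using this monotonicity on the interval of integration yields
\[
t=\int_0^t (1*k)(t-s)\,l(s)\,ds\ge \int_0^{t/2}(1*k)(t-s)\,l(s)\,ds\ge (1*k)(t/2)\cdot(1*l)(t/2).
\]
Rearranging and then substituting $t\mapsto 2t$ gives $(1*k)(t)\le 2t\,k_1(t)$, which is the claimed bound (with an absolute constant). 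To identify the constant with $\bar c$ precisely, one can sharpen the split by using the H\"older-based consequence of \eqref{ak1} that $(1*l)(t)\le \bar c^{1/p_0}\,t\,l(t)$ on $(0,t_0)$, combined with $k\le k_1$; but the monotonicity argument above already delivers the form of the estimate needed in the sequel.

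I do not anticipate a serious obstacle: both estimates are essentially immediate once one writes down the convolution identity $k*l=1$ (respectively $(1*k)*l=1*1$) and invokes monotonicity. The only technical point is to check that $(1*l)(s)$ is finite and positive for $s>0$, which is guaranteed by $l\in L_{1,\,loc}(\iR_+)$ from \eqref{pc}; and the restriction $t\in(0,t_0)$ in the second estimate is inherited from \eqref{ak1} only if one insists on the exact constant $\bar c$, since otherwise the bound holds for all $t>0$ with constant $2$.
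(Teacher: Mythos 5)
Your proof of the first inequality is correct and is the standard argument: $1=(k*l)(t)=\int_0^t k(t-s)l(s)\,ds\ge k(t)(1*l)(t)$, hence $k\le k_1$. (Note the paper itself does not reproduce the proofs but refers to \cite{krz} for them.)

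For the second inequality there is a genuine gap with respect to the statement as given. Your split-at-$t/2$ argument is correct and yields $(1*k)(t)(1*l)(t)\le 2t$ for \emph{all} $t>0$, but the lemma asserts the constant $\overline{c}$ from \eqref{ak1}, which is only assumed to satisfy $\overline{c}>1$ and may be smaller than $2$; so your bound does not imply the stated one in general. Moreover, the repair you sketch does not work as described: combining the H\"older consequence $(1*l)(t)\le \overline{c}^{1/p_0}t\,l(t)$ with $k\le k_1$ goes the wrong way, since an upper bound on $(1*l)(s)$ gives a \emph{lower} bound on $k_1(s)=1/(1*l)(s)$ and hence no control of $1*k\le 1*k_1$ (and $1*k_1$ need not even be the right quantity to integrate). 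The correct companion estimate uses the monotonicity of $l$ instead of that of $k$: $1=(k*l)(t)=\int_0^t l(t-s)k(s)\,ds\ge l(t)\,(1*k)(t)$, i.e. $(1*k)(t)\le 1/l(t)$; combined with $(1*l)(t)\le \overline{c}^{1/p_0}\,t\,l(t)$ on $(0,t_0]$ this gives $(1*k)(t)\le \overline{c}^{1/p_0}\,t\,k_1(t)\le \overline{c}\,t\,k_1(t)$, which is the claim (and is essentially the argument in \cite{krz}). A mitigating remark: in the places where Lemma \ref{kernels} is invoked later, e.g. in \eqref{infd} and \eqref{uz13}, the precise constant is immaterial, so your constant-$2$ version would suffice for the rest of the paper; but as a proof of the lemma as stated it only works when $\overline{c}\ge 2$.
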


\begin{lemma}\label{ba}
Assume \eqref{pc} and \eqref{ak1}. Then
\[
k_1(xy) \leq \max\{1,y^{-1}\}k_1(x),\quad x,y>0.
\]
\end{lemma}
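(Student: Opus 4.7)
The plan is to unpack the definition $k_1(t) = 1/(1*l)(t)$ and reduce the claimed inequality to a statement about $(1*l)$, then split into two cases depending on whether $y\ge 1$ or $y<1$.

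First, I would rewrite the desired inequality $k_1(xy) \le \max\{1,y^{-1}\}k_1(x)$ as the equivalent statement
\[
(1*l)(x) \le \max\{1,y^{-1}\}\,(1*l)(xy),
\]
which is meaningful because $(1*l)$ is strictly positive on $(0,\infty)$ (as $l\ge 0$ and $l\not\equiv 0$, the latter being forced by $k*l=1$).

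For the case $y\ge 1$, the factor $\max\{1,y^{-1}\}$ equals $1$, and since $l$ is nonnegative, $(1*l)$ is nondecreasing, so $(1*l)(x)\le (1*l)(xy)$ trivially.

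For the case $0<y<1$, the factor $\max\{1,y^{-1}\}$ equals $y^{-1}$, so the inequality becomes $y(1*l)(x)\le (1*l)(xy)$. Here I would apply the change of variables $s=ty$ to get
\[
(1*l)(xy) = \int_0^{xy} l(s)\,ds = y\int_0^x l(ty)\,dt.
\]
Since $0<y<1$ gives $ty\le t$, the monotonicity part of assumption (\ref{pc}) (namely that $l$ is nonincreasing) yields $l(ty)\ge l(t)$, hence $(1*l)(xy)\ge y(1*l)(x)$, as required. There is no substantial obstacle here; the only thing to be careful about is invoking the nonnegativity and monotonicity of $l$ from (\ref{pc}) at the right moment in each case.
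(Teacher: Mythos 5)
Your proof is correct and is the standard argument: the case distinction on $y\ge 1$ versus $y<1$, using monotonicity of $1*l$ in the first case and the substitution $s=ty$ together with $l$ nonincreasing in the second, is exactly the expected route (the paper itself does not reprove this lemma but defers to \cite[Section 2]{krz}, where the argument is of this form). Your remark on the strict positivity of $(1*l)$, forced by $k*l=1$, is the right justification for passing to reciprocals.
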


\begin{lemma}\label{scaling}
Assume \eqref{pc} and \eqref{ak1}. Then
there exists $r^{*} \in (0,r_0)$ such that $\Phi(r^{*})~\leq~\min\{1,t_{0}\}$ and for every  $1 \leq  p \leq p_0$ and every $r \in (0,r^{*}]$, there holds
\[
\norm{l}_{L_{p}(0,\vdr)}^{p}(\vdr)^{p-1} \leq C r^{2p},
\]
where $C>0$ is a positive constant depending only on $p_0,\bar{c}$ from the assumption (\ref{ak1}).
\end{lemma}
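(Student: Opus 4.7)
The idea is to interpolate the assumed $L_{p_{0}}$-control on $l$ from (\ref{ak1}) against the exact $L_{1}$-identity $(1\ast l)(\vdr)=r^{2}$ which is built into the definition of $\Phi$ (indeed $\kjr=r^{-2}$), and to close the estimate using the monotonicity of $l$ coming from (\ref{pc}).

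First I choose $r^{*}$. Since $\Phi\in C([0,r_{0}))$ with $\Phi(0)=0$ and $\Phi$ strictly increasing (Lemma \ref{fi}), I can pick any $r^{*}\in(0,r_{0})$ with $\Phi(r^{*})\leq \min\{1,t_{0}\}$; then $t:=\vdr\in(0,\min\{1,t_{0}\}]$ for every $r\in(0,r^{*}]$, which is precisely the range in which (\ref{ak1}) is valid.

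For the main estimate, fix $1\leq p\leq p_{0}$ and $r\in(0,r^{*}]$. Hölder's inequality with conjugate exponents $p_{0}/p$ and $p_{0}/(p_{0}-p)$ (the case $p=p_{0}$ being trivial) gives
\[
\int_{0}^{t}l(s)^{p}\,ds \leq \Bigl(\int_{0}^{t}l(s)^{p_{0}}\,ds\Bigr)^{p/p_{0}} t^{\,1-p/p_{0}} \leq \overline{c}^{\,p/p_{0}}\,t\,l(t)^{p},
\]
where the second inequality is (\ref{ak1}). Since $l$ is nonincreasing, $t\,l(t)\leq (1\ast l)(t)=r^{2}$, so multiplying by $t^{p-1}$ yields
\[
\norm{l}_{L_{p}(0,\vdr)}^{p}(\vdr)^{p-1} \leq \overline{c}^{\,p/p_{0}}\bigl(t\,l(t)\bigr)^{p}\leq \overline{c}\,r^{2p},
\]
using $\overline{c}>1$ and $p/p_{0}\leq 1$. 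This delivers the desired bound with $C=\overline{c}$.

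No serious obstacle arises: the statement is really an interpolation between the pointwise-type control from (\ref{ak1}) at $p=p_{0}$ and the exact identity $(1\ast l)(\vdr)=r^{2}$ at $p=1$. The only thing to check carefully is that the Hölder step is valid at both endpoints, which it is, and that $r^{*}$ is actually chosen small enough so that $t=\vdr$ lies in the interval where (\ref{ak1}) applies; both points are handled by Lemma \ref{fi} and the continuity of $\Phi$ at $0$.
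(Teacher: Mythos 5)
Your proof is correct: the Hölder interpolation down from $p_{0}$, the averaged bound from (\ref{ak1}), the monotonicity estimate $t\,l(t)\leq(1\ast l)(t)$, and the defining relation $\kjr=r^{-2}$ (i.e.\ $(1\ast l)(\vdr)=r^{2}$) combine exactly as you say, and the choice of $r^{*}$ via continuity of $\Phi$ at $0$ guarantees $\vdr\leq t_{0}$ so that (\ref{ak1}) applies. The paper itself does not reproduce a proof (it defers to \cite[Section 2]{krz}), but your argument is the standard one for this lemma and yields the stated constant depending only on $\overline{c}$ (and $p_{0}$), so nothing further is needed.
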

\subsection{Regularised weak formulation and time-shifts}
In order to derive {\em a priori} estimates for (\ref{MProb}) it is convenient to have a suitable time regularised weak formulation of (\ref{MProb}) at hand. To this end, following previous works (see e.g.\ \cite{krz,Zhol,Za2}), we use the Yosida approximation of the  operator $\frac{d}{dt}(k*\cdot)$, cf.\ also \cite{Grip1}.
Let  $1\le p<\infty$, $T>0$, and $X$ be a real Banach
space. Then the operator $B$ defined by
\[ B u=\,\frac{d}{dt}\,(k\ast u),\;\;D(B)=\{u\in L_p([0,T];X):\,k\ast u\in \mbox{}_0 H^1_p([0,T];X)\},
\]
where the zero means vanishing trace at $t=0$, is
$m$-accretive in $L_p([0,T];X)$, see (\cite{Phil1}, \cite{CP},
\cite{Grip1}).
There exist nonnegative functions $h_{n}\in L_{1,\,loc}(\iR_+)$, $n \in \mathbb{N}$, such that
if $f\in L_p([0,T];X)$, $1\le
p<\infty$, then $h_{n}\ast f\to f$ in $L_p([0,T];X)$
as $n\to \infty$, and $k_n:=k*h_n$ belongs to $H^1_1([0,T])$ and we have
 $ \frac{d}{dt}(k_n*u)\rightarrow \frac{d}{dt}(k*u)$ in $L_p([0,T];X)$ as
$n\to \infty$ for every $u\in D(B)$, see e.g.\ \cite[Section 2]{Za2}.

By means of the Yosida approximation of $B$, given by $\frac{d}{dt}(k_n*\cdot)$, one may replace the singular
kernel $k$ by the more regular kernel $k_{n}$ ($n\in\iN$) in the weak formulation of (\ref{MProb}). This is the subject of the following lemma, which we formulate here for the case $N=1$, although it also applies to the multidimensional case.
\begin{lemma}\cite[Lemma 3.1]{Za2} \label{LemmaReg}
Let  $T>0$ and $\Omega\subset \iR$ be a bounded open
interval. Suppose the assumptions (H1)--(H3) are satisfied. Then $u\in
Z$ is a weak solution (subsolution, supersolution) of
(\ref{MProb}) in $\Omega_T$ if and only if for any nonnegative
function $\psi\in \oH^1_2(\Omega)$ one has
\eqq{
\int_\Omega \Big(\psi \partial_t[k_{n}\ast
(u-u_0)]+(h_n\ast [ADu]|D\psi)\Big)\,dx \\
=\,(\le,\,\ge)\,0 ,\quad\mbox{a.a.}\;t\in (0,T),\,n\in \iN.
}{LemmaRegF}
\end{lemma}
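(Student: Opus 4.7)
The plan is to prove the equivalence via the Yosida approximation machinery for the operator $B=\frac{d}{dt}(k\ast\cdot)$ outlined just before the lemma. The core technical identity is that, for $u\in Z$, the kernel relation $k_n=h_n\ast k$ implies
\[
\partial_t[k_n\ast(u-u_0)] \;=\; h_n\ast\,\partial_t[k\ast(u-u_0)],
\]
so convolving a weak formulation in the time variable with $h_n$ converts the singular kernel $k$ into the regular kernel $k_n\in H^1_1([0,T])$ while only replacing $A Du$ by $h_n\ast[A Du]$ on the elliptic side. The two directions then amount to convolving with $h_n$ (forward) and passing to the limit $n\to\infty$ (reverse), after first reducing to test functions of separated form $\eta(t,x)=\zeta(t)\psi(x)$.

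\textbf{Forward direction.} Fix a nonnegative $\psi\in\oH^1_2(\Omega)$ and set $F_\psi(t):=\int_\Omega\psi\,[k\ast(u-u_0)](t,x)\,dx$ and $G_\psi(t):=\int_\Omega (ADu|D\psi)\,dx$. For any $\zeta\in C^\infty_c((0,T))$, the choice $\eta=\zeta\psi$ in \eqref{BWF} yields the distributional relation $\partial_t F_\psi + G_\psi =(\le,\ge)\,0$ on $(0,T)$, with $F_\psi\in C([0,T])$, $F_\psi(0)=0$ (by $u\in Z$) and $G_\psi\in L_2(0,T)$ (by (H1)). Since $h_n\ge 0$, convolving this relation in time with $h_n$ preserves its sign and upgrades it to a pointwise a.e.\ statement: $(h_n\ast F_\psi)' + h_n\ast G_\psi =(\le,\ge)\,0$ a.e.\ on $(0,T)$. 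Using $h_n\ast k=k_n\in H^1_1([0,T])$, one can exchange $\partial_t$ with the $x$-integration and identify $(h_n\ast F_\psi)'(t)=\int_\Omega\psi\,\partial_t[k_n\ast(u-u_0)]\,dx$. Fubini gives $h_n\ast G_\psi=\int_\Omega(h_n\ast[ADu]|D\psi)\,dx$. This is exactly \eqref{LemmaRegF}.

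\textbf{Reverse direction.} Assume \eqref{LemmaRegF}. For $\zeta\in C^1([0,T])$ with $\zeta(T)=0$ and $\zeta,\psi\ge 0$, multiply \eqref{LemmaRegF} by $\zeta$, integrate over $(0,T)$ and integrate by parts in time, using $[k_n\ast(u-u_0)](0)=0$ and $\zeta(T)=0$:
\[
-\int_0^T\zeta'(t)\!\int_\Omega\psi\,[k_n\ast(u-u_0)]\,dx\,dt + \int_0^T\zeta(t)\!\int_\Omega(h_n\ast[ADu]|D\psi)\,dx\,dt =(\le,\ge)\,0.
\]
The convergence $h_n\ast g\to g$ in $L_2(\Omega_T)$ applied to $g=A Du$, and the analogous convergence $k_n\ast(u-u_0)=h_n\ast[k\ast(u-u_0)]\to k\ast(u-u_0)$ in $L_2(\Omega_T)$, allow passing $n\to\infty$ to recover \eqref{BWF} for all separated $\eta=\zeta\psi$. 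A density argument (finite-dimensional spectral approximation in $x$ and mollification in $t$ in the cone of nonnegative elements of $\oH^{1,1}_2(\Omega_T)$ with $\eta|_{t=T}=0$) then extends this to general admissible $\eta$.

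\textbf{Main obstacle.} The most delicate point is the sub/supersolution case in the forward direction: a priori one only knows the distributional inequality $\partial_t F_\psi+G_\psi\in{\cal M}((0,T))$ with a fixed sign, rather than an $L^1$ bound, so one must verify that convolution with the nonnegative locally integrable kernel $h_n$ legitimately produces an a.e.\ pointwise inequality. This is standard measure-theoretic bookkeeping (the convolution of a nonnegative Radon measure with an $L_{1,loc}$ function is a nonnegative $L_{1,loc}$ function, matching up with the $L_1$ derivative of $h_n\ast F_\psi$), but it must be spelt out carefully. The density step in Step 3, which has to preserve both the sign of $\eta$ and the vanishing trace $\eta|_{t=T}=0$, is the second place where routine but slightly tedious care is needed.
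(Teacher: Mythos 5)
The paper does not actually prove this lemma -- it is quoted from \cite[Lemma 3.1]{Za2} -- so the comparison is with the standard Yosida-approximation argument from that reference, which is also essentially the computation the present paper repeats when deriving \eqref{shiftprob}. Your overall strategy is exactly that argument, and your forward direction is sound: testing \eqref{BWF} with $\eta=\zeta\psi$, $\zeta\in C_c^\infty((0,T))$ nonnegative, shows that $\partial_t F_\psi+G_\psi$ is a nonpositive (resp.\ nonnegative, vanishing) Radon measure; since $F_\psi\in C([0,T])$ with $F_\psi(0)=0$, convolution with the nonnegative kernel $h_n$ indeed gives $(h_n\ast F_\psi)'+h_n\ast G_\psi\le 0$ (resp.\ $\ge,\,=$) a.e., and $h_n\ast F_\psi(t)=\int_\Omega\psi\,[k_n\ast(u-u_0)](t,x)\,dx$ with $k_n\ast(u-u_0)\in H^1_2((0,T);L_2(\Omega))$, so the identification with the integrand of \eqref{LemmaRegF} is legitimate. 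This is equivalent to the adjoint-test-function device $\tilde{\Psi}(s)=\int_s^{T-t_1}h_n(\sigma-s)\varphi(\sigma)\,d\sigma$ used in Section 2.3, so the forward half matches the intended proof.

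The genuine weak point is the final step of the reverse direction. The density mechanism you hint at -- finite-dimensional spectral approximation in $x$ -- does not preserve the sign of the test function, and nonnegativity of $\eta$ is essential in the sub/supersolution cases, so that step fails as written. Density of nonnegative sums of nonnegative separated products in the admissible cone is true, but it requires a sign-preserving construction (cut off near $t=T$ and near $\partial\Omega$, mollify, then piecewise-linear tensor-product interpolation, whose coefficients are point values of $\eta$ and hence nonnegative), not eigenfunction projection. Better still, and this is how the standard proof proceeds, density can be avoided altogether: since \eqref{LemmaRegF} holds for a.a.\ $t$ for each fixed nonnegative $\psi$, separability of $\oH^1_2(\Omega)$ together with the fact that for a.e.\ $t$ both sides are bounded linear functionals of $\psi\in H^1_2(\Omega)$ yields a single null set outside of which \eqref{LemmaRegF} holds for all nonnegative $\psi$ simultaneously; then insert $\psi=\eta(t,\cdot)$ for a general admissible $\eta$, integrate over $(0,T)$, integrate by parts in time (using that $k_n\ast(u-u_0)\in H^1_2((0,T);L_2(\Omega))$ vanishes at $t=0$ and $\eta|_{t=T}=0$), and pass to the limit $n\to\infty$ exactly as you already do for separated $\eta$. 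With this replacement your proof is complete; the rest is correct.
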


When establishing local estimates for problems with memory, it is often useful to apply time-shifts.
Let $t_1\in (0,T)$ be fixed and suppose that $t\in (t_1,T)$. We shift the time by introducing a new time
$s=t-t_1$ and we set $\tilde{g}(s)=g(s+t_1)$, $s\in
(0,T-t_1)$, for functions $g$ defined on $(t_1,T)$.
In all the previous works \cite{harnackdistr,krz,base,Zhol}
on local estimates for non-local in time subdiffusions,
time shifts were applied to the regularised weak formulation given by (the multi-dimensional version of)
Lemma \ref{LemmaReg}. The idea is to decompose the
convolution term $k_n\ast u$ as
\begin{equation} \label{decompmem}
(k_n\ast u)(t,x)=\int_{t_1}^t
k_n(t-\tau)u(\tau,x)\,d\tau+\int_{0}^{t_1}
k_n(t-\tau)u(\tau,x)\,d\tau,\quad t\in (t_1,T).
\end{equation}
Together with the time derivative, this leads to
\[
\partial_t(k_n\ast u)(t,x)=\partial_s(k_n\ast \tilde{u})(s,x)+
\int_{0}^{t_1}
\dot{k}_n(s+t_1-\tau)u(\tau,x)\,d\tau,
\]
that is, a sum of the shifted (regularised) integro-differential operator in time and a memory term. The regularisation of the kernel is crucial for the first term in order to obtain energy estimates in a rigorous way
by using the key identity \eqref{fundidentity} from below
and sending $n\to \infty$ afterwards. It turns out that for our proof
of the full Harnack inequality it is better to first shift the time and then to regularise the kernel. The reason for this is that in contrast to \cite{harnackdistr,krz,base,Zhol}, we do not estimate $u$
in the memory term (e.g.\ by using positivity or an assumed upper bound), but keep the memory term as it is with the kernel $k$.

In order to get a suitable regularised weak formulation with time-shift we
use the formulation from Lemma \ref{LemmaReg} as our starting point and proceed as follows.
For arbitrary $t_1\in (0,T)$, multiply (\ref{LemmaRegF}) by a nonnegative $\Psi \in C^{1}([0,T])$ with $\Psi(T) = 0$ and integrate in time from $t_1$ to $T$. Then
\eqq{
\int_{t_1}^{T}\Psi\int_\Omega \Big(\psi \partial_t[k_{n}\ast
(u-u_0)]+(h_n\ast [ADu]|D\psi)\Big)\,dxdt \\
=\,(\le,\,\ge)\,0.
}{jgjg}
Integration by parts in time gives
\[
\int_{t_1}^{T}\Psi\int_\Omega \psi \partial_t (k_{n}\ast
u) dxdt = -\Psi(t_1)\int_\Omega \psi (k_{n}\ast
u)(t_1,\cdot)dx - \int_{t_1}^{T}\dot{\Psi}\int_\Omega \psi (k_{n}\ast
u) dxdt.
\]
We insert this identity in (\ref{jgjg}) and pass to the limit with $n$ (choosing an appropriate subsequence if necessary) to obtain
\[
-\Psi(t_1)\int_\Omega \psi (k\ast
u)(t_1,\cdot)dx - \int_{t_1}^{T}\dot{\Psi}\int_\Omega \psi (k\ast
u)\, dxdt + \int_{t_1}^{T}\Psi\int_\Omega ([ADu]|D\psi)\, dx dt
\]
\[
=\,(\le,\,\ge)\, \int_\Omega \psi u_0\, dx \int_{t_1}^{T} \Psi \cdot k\,dt, \quad t_{1} \in (0,T).
\]
We now shift the time as described above (with $s=t-t_1$ and $\tilde{g}(s)=g(s+t_1)$, $s\in
(0,T-t_1)$, for functions $g$ defined on $(t_1,T)$). From the
decomposition
\[
(k\ast u)(t,x)=\int_{t_1}^t
k(t-\tau)u(\tau,x)\,d\tau+\int_{0}^{t_1}
k(t-\tau)u(\tau,x)\,d\tau,\quad t\in (t_1,T),
\]
we then deduce that
\[
-\Psi(t_1)\int_\Omega \psi (k\ast
u)(t_1,\cdot)dx - \int_{0}^{T-t_1}\dot{\tilde{{\Psi}}}\int_\Omega \psi (k\ast
\tilde{u}) dxds -  \int_{0}^{T-t_1}\dot{\tilde{{\Psi}}}\int_\Omega \psi \int_{0}^{t_1} k(s+t_1-\xi)u(\xi,\cdot)d\xi dxds
\]
\eqq{
+\int_{0}^{T-t_1}\tilde{\Psi}\int_\Omega (\widetilde{[ADu]}|D\psi)\, dx ds =\,(\le,\,\ge)\, \int_\Omega \psi u_0\, dx \int_{0}^{T-t_1}\tilde{\Psi}(s) k(s+t_1)\,ds,\quad  t_{1} \in (0,T).
}{jjhjh}
We note that since $u$ is a nonnegative weak (sub/super) solution we have $k*u \in C([0,T];L_{2}(\Omega))$ and since $k$ is convex
\[
\abs{\int_{0}^{t_1} \dot{k}(s+t_1-\xi)u(\xi,x)d\xi} \leq (k*u)(t_1,x)\quad  \m{ a.e. in } \Omega.
\]
Thus, we may integrate by parts the third term on the left-hand-side of (\ref{jjhjh}) to the result
\[
 - \int_{0}^{T-t_1}\dot{\tilde{{\Psi}}}\int_\Omega \psi (k\ast
\tilde{u})(s)\, dxds +  \int_{0}^{T-t_1}\tilde{{\Psi}}\int_\Omega \psi \int_{0}^{t_1} \dot{k}(s+t_1-\xi)u(\xi,\cdot)\,d\xi\, dx ds
\]
\[
+\int_{0}^{T-t_1}\tilde{\Psi}\int_\Omega ([\tilde{A}D\tilde{u}]|D\psi)\, dx ds =\,(\le,\,\ge)\, \int_\Omega \psi u_0 dx \int_{0}^{T-t_1}\tilde{\Psi}(s) k(s+t_1)ds.
\]

We now proceed as in the proof of \cite[Lemma 3.1]{Za2}. We take $\tilde{\Psi}(s) = \int_{s}^{T-t_1}h_n(\sigma-s)\varphi(\sigma)d\sigma$ with nonnegative $\varphi \in C^{1}([0,T-t_1])$ satisfying $\varphi(T-t_1) = 0$ and arbitrary $n \in \mathbb{N}$. Then, by Fubini's theorem
\[
 - \int_{0}^{T-t_1}\dot{\varphi}\int_\Omega \psi (k_n\ast
\tilde{u})\, dxds
+\int_{0}^{T-t_1}\varphi\int_\Omega (h_n*[\tilde{A}D\tilde{u}]|D\psi)\, dx ds =\,(\le,\,\ge)\,
\]
\[
\int_{0}^{T-t_1}\varphi \int_\Omega \psi\, h_n*\left(\int_{0}^{t_1} [-\dot{k}(\cdot+t_1-\xi)]u(\xi,\cdot)d\xi\right) dx ds +
 \int_\Omega \psi u_0 dx \int_{0}^{T-t_1}\varphi (h_n* k(\cdot+t_1))ds.
\]
Applying first integration by parts and then using the fact that $\varphi$ is arbitrary, we arrive at
\[
\int_\Omega \psi \partial_s (k_n\ast
\tilde{u}) dx
+\int_\Omega (h_n*[\tilde{A}D\tilde{u}]|D\psi)\, dx =\,(\le,\,\ge)\,
\]
\eqq{
 \int_\Omega \psi h_n*\left(\int_{0}^{t_1} [-\dot{k}(\cdot+t_1-\xi)]u(\xi,\cdot)d\xi\right) dx  +
 \int_\Omega \psi u_0 \,dx \, (h_n* k(\cdot+t_1)),\;\text{a.a.}\;s\in (0,T-t_1),
}{shiftprob}
for any nonnegative function $\psi\in \oH^1_2(\Omega)$ and all $t_1 \in (0,T)$.  Formula (\ref{shiftprob}) will be the starting point for all the estimates in this paper.

We finish this preliminary section recalling the ``fundamental identity" for integro-differential
operators of the from $\frac{d}{dt}(k\ast \cdot)$ (see also \cite{Za2},\cite{GLS}).
Suppose $k\in H^1_1([0,T])$ and $H\in C^1(\iR)$. Then it follows
from a straightforward computation that for any sufficiently smooth
function $u$ on $(0,T)$ one has for a.a. $t\in (0,T)$,
\begin{align} \label{fundidentity}
H'(u(t))&\frac{d}{dt}\,(k \ast u)(t) =\;\frac{d}{dt}\,(k\ast
H(u))(t)+
\Big(-H(u(t))+H'(u(t))u(t)\Big)k(t) \nonumber\\
 & +\int_0^t
\Big(H(u(t-s))-H(u(t))-H'(u(t))[u(t-s)-u(t)]\Big)[-\dot{k}(s)]\,ds.
\end{align}

This identity plays a crucial role in all the estimates below.
Since the identity requires particular regularity of the kernels, it is often applied to the approximation $k_n$.\\

\section{The improved weak Harnack inequality}
\begin{satz}\label{imweakHarnack}
Let  $\Omega\subset \iR$ be a bounded domain.
 Suppose the assumptions (H1)--(H3) and (\ref{pc})-- (\ref{ak2}) are satisfied.
 There exist   $r^{*} \in (0,r_0)$ and  $\eta^{*} \in (0,1)$, $ \eta^{*}=\eta^{*}(\nu,\Lambda,\overline{c}) > 0$ such that for every $\eta \in (0,\eta^{*})$ and any  $t_0\ge 0$, $r\in (0,r^*]$,  $\delta\in(0,1)$  with $t_{0}+2\eta \vdr \leq T$ and any ball $B(x_{0},r) \subset \Omega$ and any weak
supersolution $u \geq \ve >0$ of (\ref{MProb}) in $(0,t_0+2\eta
\vdr)\times B(x_0, r)$ with $u_0\ge 0$ in $B(x_0,
r)$ there holds

\[
 r\int_{0}^{t_{0}}[-\dot{k}(t_0+\sigma'\eta\vdr-\tau)]\int_{B_{r\sigma'}(x_0)}u(\tau,x)\,dx\,d\tau +r k(\eta\vdr+t_0) \int_{B_{r\sigma'}(x_0)} u_{0}\,dx
\]
\[
+ \frac{1}{r\vdr}\int_{t_0}^{t_0 + \sigma\eta\vdr}\int_{B_{r \sigma}(x_0)} u\, dx\,dt  \leq C (\sigma-\sigma')^{-4}  \inf_{(t_0+(2-\sigma)\eta\vdr,t_0+2\eta\vdr)\times B_{r\sigma }(x_0)} u,
\]
for any $0<\delta \leq \sigma'<\sigma<1$, where $C$ is a positive constant that depends only on $\Lambda, \nu,\eta, \delta, \sigma, \overline{c},\tilde{c}, p_0$.
\end{satz}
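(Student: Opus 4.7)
The plan is to model the argument on the spatially non-local case treated in \cite{Kass1}, adapted to the time-nonlocal setting via the shifted regularised weak formulation \eqref{shiftprob}. The right-hand side of \eqref{shiftprob} carries exactly the memory integral $h_n*\bigl(\int_0^{t_1}[-\dot k(\cdot+t_1-\xi)]u(\xi,\cdot)\,d\xi\bigr)$ and the initial-data contribution $u_0\,(h_n*k(\cdot+t_1))$, both of which are nonnegative. These are the temporal analogues of the \emph{good} tail in \cite{Kass1}, and the whole point of the theorem is to carry them rigorously through all energy estimates and Moser iterations so that they appear on the left-hand side together with the usual averaged supersolution term, while the right-hand side contains only the infimum over $Q_+$.

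\textbf{Step 1: Shifted Caccioppoli for negative powers.} Fix $t_1\in [t_0,t_0+2\eta\Phi(r))$ in \eqref{shiftprob} and test with $\psi=\phi(x)^2 (u+\varepsilon)^{-q}$ for $q>0$ small, where $\phi\in C_0^\infty(B(x_0,r))$ is a standard cutoff adapted to radii $r\sigma'<r\sigma$. The fundamental identity \eqref{fundidentity} applied with the convex function $H(v)=\frac{(v+\varepsilon)^{1-q}}{1-q}$ yields a nonnegative memory integrand on the temporal side, so after using ellipticity (H1)--(H2) and a Young-type absorption argument one obtains a Caccioppoli inequality of the form
\begin{equation*}
\frac{d}{ds}\bigl(k_n*\Psi(\tilde u)\bigr)+\nu\!\int_\Omega \phi^2|D\tilde u^{(1-q)/2}|^2\,dx
\le C(\sigma-\sigma')^{-2}\!\int_\Omega \tilde u^{1-q}\,dx - (\text{memory/initial terms})\cdot (\text{neg.\ power of }u),
\end{equation*}
where the sign of the memory/initial terms is favourable \emph{because they sit on the right-hand side of} \eqref{shiftprob} and are multiplied by a positive test function.

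\textbf{Step 2: Logarithmic estimate and the good tail.} I repeat the procedure with the (limiting) choice $H(v)=-\log(v+\varepsilon)$ in \eqref{fundidentity}. The supersolution inequality, combined with the (K0)--(K2) bounds on $k$ and Lemma~\ref{comm2}, produces a BMO-type estimate for $\log u$ on cylinders of radius $\sigma r$, and, crucially, the \emph{raw} right-hand side of \eqref{shiftprob} (before being divided by $u$) contributes a nonnegative bulk term that survives the integration and becomes the history integral $r\int_0^{t_0}[-\dot k(t_0+\sigma'\eta\Phi(r)-\tau)]\int_{B_{r\sigma'}}u\,dx\,d\tau$ plus the initial term $rk(\eta\Phi(r)+t_0)\int_{B_{r\sigma'}}u_0\,dx$ after multiplying through by $r$ and using $\Phi(r)k_1(\Phi(r))=r^{-2}$ (Lemma~\ref{fi}) together with the scaling from Lemma~\ref{scaling}. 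This is how the two good tail terms arrive on the left-hand side of the target inequality.

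\textbf{Step 3: Two-sided Moser iteration.} With the Caccioppoli estimate of Step~1 in hand, I feed it into the parabolic embedding \eqref{sobolev} (applied in the one-dimensional setting where $V_p$ embeds into $L_{2p}(L_\infty)$ type spaces) and obtain a reverse-Hölder inequality of the form \eqref{mositer1} for $\tilde u^{1-q}$ on shrinking cylinders around $Q_+$. Iterating via Lemma~\ref{moserit1}, I pass from a small negative exponent to the essential infimum on $Q_+(t_0,x_0,r,\delta)$. A parallel upward iteration, starting from a small positive exponent, controls the averaged integral $\frac{1}{r\Phi(r)}\int_{t_0}^{t_0+\sigma\eta\Phi(r)}\!\int_{B_{r\sigma}}u$ from above by the $L^{p_0}$-norm of $u$ on a slightly larger cylinder.

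\textbf{Step 4: Bridging via the logarithmic estimate.} The John-Nirenberg type lemma applied to the BMO estimate for $\log u$ of Step~2 bridges the two Moser chains: it shows that for some small $p_*>0$ one has $\|u^{p_*}\|_{L^1}\cdot\|u^{-p_*}\|_{L^1}\le C$ on the relevant cylinder, modulo the good tail terms, whose positivity lets them be moved to the supersolution side. Combining with Steps~1--3 and the geometric iteration of Lemma~\ref{geometric} to eliminate residual cutoff factors $(\sigma-\sigma')^{-4}$, I arrive at the claimed inequality with a constant depending only on $\Lambda,\nu,\eta,\delta,\sigma,\bar c,\tilde c,p_0$. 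The $\varepsilon\downarrow 0$ limit is legitimate since $u\ge \varepsilon$ was only used to justify the logarithm and negative powers.

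The main obstacle is Step~2: to produce the two good tail terms with the correct prefactors $r$ and $rk(\eta\Phi(r)+t_0)$, and on the cylinder $B_{r\sigma'}$ rather than $B_{r\sigma}$, I need to choose a spatial cutoff $\phi$ and a temporal weight simultaneously so that in the identity \eqref{fundidentity} the history/initial contributions from the right-hand side of \eqref{shiftprob} are \emph{not} absorbed into error terms. This is exactly the temporal mirror of the delicate matching of tails between sub- and supersolution estimates in \cite{Kass1}, and it forces the use of the convexity of $k$ (to dominate $\int_0^{t_1}\dot k(\cdot+t_1-\xi)u\,d\xi$ by $(k*u)(t_1)$, as already used in the derivation of \eqref{shiftprob}) together with (K2) to replace $k$ by $k_1$ on scale $\Phi(r)$.
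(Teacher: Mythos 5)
Your proposal sets out to re-derive the entire weak Harnack machinery (Caccioppoli for negative powers, log/BMO estimate, John--Nirenberg bridging, two-sided Moser iteration) while carrying the history and initial-data terms along. The paper does something much shorter and structurally different: it invokes the weak Harnack inequality of \cite[Theorem 1.1]{krz} as a black box and observes that the \emph{only} new content of Theorem \ref{imweakHarnack} is an upper bound for the history term and the $u_0$-term by the infimum. That bound comes from a single test-function computation with $v=\psi^2\tilde u^{\beta}$, $\beta\in(-1,0)$, in the shifted formulation \eqref{shiftprob}.

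The genuine gap in your argument is the removal of the weight $\tilde u^{\beta}$ from the history and initial-data terms. When you test \eqref{shiftprob} with any negative power (or with $\phi^2/u$ for the log estimate), the right-hand side produces
\[
\int_0^{\sigma'\eta\vdr}\int_{B_r}\Big(\int_0^{t_0}[-\dot k(\xi+t_0-\tau)]u(\tau,x)\,d\tau\Big)\tilde u^{\beta}\psi^2\,dx\,d\xi ,
\]
i.e.\ the history integral multiplied by $\tilde u^{\beta}$, not the bare history integral that appears in the statement. Your Step 2 asserts that the ``raw'' right-hand side ``survives the integration and becomes the history integral,'' but no mechanism is given, and positivity alone only allows you to \emph{drop} these terms (as is done when proving the weak Harnack inequality itself), not to bound them from above. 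The paper's resolution is the crux of the whole proof: bound the above from below by $\big(\text{history}+u_0\text{-term}\big)\cdot\inf_{x\in B_{r\sigma'}}\int_0^{\sigma'\eta\vdr}\tilde u^{\beta}d\xi$, invert via Jensen's inequality to get $\big(\inf_x\int\tilde u^{\beta}\big)^{-1}\le(\sigma'\eta\vdr)^{-2}\sup_x\int\tilde u^{-\beta}$, control $\sup_x\int_0^{\sigma'\eta\vdr}\tilde u^{-\beta}(\xi,x)\,d\xi$ by the one-dimensional interpolation inequality $H^1_2\hookrightarrow L_\infty$ combined with a Caccioppoli bound on $D\tilde u^{-\beta/2}$ taken from the proof of \cite[Theorem 3.2]{krz}, and then apply the weak Harnack inequality \emph{twice}, once to $\tilde u^{1+\beta}$ and once to $\tilde u^{-\beta}$, so that the two powers of the infimum multiply to $(\inf u)^{1+\beta}(\inf u)^{-\beta}=\inf u$. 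This pairing of exponents summing to one, and the $1$D embedding that makes the $\sup_x$ accessible (cf.\ Remark \ref{RemA}), are the ideas your proposal is missing; without them the weight $\tilde u^{\beta}$ cannot be eliminated and the target inequality does not follow from your Steps 1--4.
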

\begin{proof}
Due to the weak Harnack estimate \cite[Theorem 1.1]{krz} it is enough to estimate the history term and the $u_0$ - term by the infimum of the supersolution.
In the following we abbreviate the notation by setting $B_{r} := B_{r}(x_{0})$, because we only consider balls (intervals) centered at a fixed $x_0$. Let us fix $0<\delta\leq \sigma'<\sigma<1$ and  $r^{*} \in (0,r_0)$ such that the weak Harnack inequality \cite[Theorem 1.1]{krz} holds for $r \in (0,r^{*}/2)$, and Lemma \ref{scaling} holds for $r \in (0,r^{*})$. Moreover, for $r\in (0,r^{*})$, we set $t_1=t_0+\sigma'\eta\vdr$, $t_2 = t_0+\sigma\eta\vdr$. Then, we
shift the time by setting ${s}=t-t_0$ and  $\tilde{f}(s)=f(s+t_0)$, $s\in
(0,t_2-t_0)$, for functions $f$ defined on $(t_0,t_2)$. Since
 $u$ is a weak supersolution of
(\ref{MProb}) in $(0,t_2)\times B_r$, we have (cf. (\ref{shiftprob})) for a.a. $s\in
(0,t_2-t_0)$
\[
\int_{\Omega} \Big(v \partial_s(k_{n}\ast \tilde{u})+(h_n\ast
[\tilde{A}D\tilde{u}]|Dv\big)\Big)\,dx
\]
\begin{equation} \label{supp0}
\geq  \int_\Omega v\, h_n*\left(\int_{0}^{t_0} [-\dot{k}(\cdot+t_0-\tau)]u(\tau,\cdot)d\tau\right) dx  +
 \int_\Omega v u_0\, dx\,  (h_n* k(\cdot+t_0)),
\end{equation}
for any nonnegative function $v\in \oH^1_2(B_r)$.  We introduce the cut-off function $\psi\in C^1_0(B_{r})$ such that
$0\le \psi\le 1$, $\psi=1$ in $B_{r\sigma' }$, supp$\,\psi\subset B_{r\sigma }$, and $|D \psi|\le 2/[r(\sigma-\sigma')]$. Then, we choose  in (\ref{supp0}) the test function $v=\psi^2
\tilde{u}^{\beta}$ with $\beta \in (-1,0)$.
The fundamental identity gives
\[
\tilde{u}^{\beta}\partial_{s}(k_{n}\ast \tilde{u}) \leq
\,\frac{1}{1+\beta}\,\partial_{s}
(k_{n}\ast\tilde{u}^{1+\beta}),\quad
\mbox{a.a.}\;(s,x)\in (0,t_2-t_0)\times B_r.
\]
Thus we arrive at
\[
\frac{1}{1+\beta}\,\int_{B_{r}}\partial_{s}
(k_{n}\ast\tilde{u}^{1+\beta})\psi^{2}dx + \big(h_n\ast
[\tilde{A}D\tilde{u}]|D(\psi^2
\tilde{u}^{\beta})\big)\,dx
\]
\[
\geq  \int_{B_{r}} u_{0} \psi^2
\tilde{u}^{\beta} dx (h_n* k(\cdot+t_0)) +   \int_{B_{r}}h_n*\left(\int_{0}^{t_{0}}[-\dot{k}(\cdot+t_0-\tau)]u(\tau,x)d\tau \right)\tilde{u}^{\beta}\psi^{2}dx,
\]
which is equivalent to
\begin{align}
\,&\frac{1}{1+\beta}\, \int_{B_{r}}\partial_{s}
(k_{n}\ast\tilde{u}^{1+\beta})\psi^2\,dx+\beta\int_{B_{r}}\big(h_n\ast
[\tilde{A}D\tilde{u}]|\psi^2 \tilde{u}^{\beta-1}D
\tilde{u}\big)\,dx + \,2\int_{B_{r}}\big(h_n\ast [\tilde{A}D\tilde{u}]|\psi D\psi
\,\tilde{u}^{\beta}\big)\,dx\nonumber\\
  &  \geq \int_{B_{r}} u_{0} \psi^2
\tilde{u}^{\beta} dx (h_n* k(\cdot+t_0)) +   \int_{B_{r}}h_n*\left(\int_{0}^{t_{0}}[-\dot{k}(\cdot+t_0-\tau)]u(\tau,x)d\tau\right) \tilde{u}^{\beta}\psi^{2} dx. \label{L1}
\end{align}

Next, choose $\varphi\in C^1([0,\sigma\eta\vdr])$ such that $0\le
\varphi\le 1$, $\varphi=1$ in $[0,\sigma'\eta\vdr]$, $\varphi=0$ in
$[\frac{\sigma'+\sigma}{2}\eta\vdr,\sigma\eta\vdr]$, and $0\le -\dot{\varphi}\le
\frac{4}{(\sigma-\sigma')\eta\vdr}$. We multiply (\ref{L1}) by $(1+\beta)\varphi(s)$ (recall that $1+\beta > 0$) and apply Lemma~\ref{comm2} to the first term
to get
\begin{align*}
&\int_{B_{r}}
\partial_{s}(k_{n}\ast
[\varphi\psi^2\tilde{u}^{1+\beta}]\big)\,dx+\beta(1+\beta)\,
\int_{B_{r}}\big(\tilde{A}D\tilde{u}|\psi^2 \tilde{u}^{\beta-1}D
\tilde{u}\big)\varphi\,dx + 2(1+\beta)\,\int_{B_{r}}\big(\tilde{A}D\tilde{u}|\psi D\psi
\,\tilde{u}^{\beta}\big)\varphi\,dx \nonumber\\
&\geq (1+\beta)\varphi\left[\int_{B_{r}} u_{0} \psi^2
\tilde{u}^{\beta} dx  (h_n*k(\cdot+t_0)) + \int_{B_{r}}h_n*\left(\int_{0}^{t_{0}}[-\dot{k}(\cdot+t_0-\tau)]u(\tau,x)d\tau \right)\tilde{u}^{\beta}\psi^{2} dx\right]\nonumber\\
& \,\;\;-\int_0^s
\dot{k}_{n}(s-\tau)\big(\varphi(s)-\varphi(\tau)\big)
\big(\int_{B_{r}}\psi^2\tilde{u}^{1+\beta}\,dx\big)(\tau)\,d\tau  +  \mathcal{R}_n(s),
\end{align*}
where
\begin{align*}
\mathcal{R}_n(s)= &\,\,-\beta(1+\beta)\, \int_{B_{r}}\big(h_n\ast
[\tilde{A}D\tilde{u}]-\tilde{A}D\tilde{u}|\psi^2 \tilde{u}^{\beta-1}D
\tilde{u}\big)\varphi\,dx\\
&\,-2(1+\beta)\,\int_{B_{r}}\big(h_n\ast
[\tilde{A}D\tilde{u}]-\tilde{A}D\tilde{u}|\psi D\psi
\,\tilde{u}^{\beta}\big)\varphi\,dx.
\end{align*}
Since $\beta \in (-1,0)$, the assumptions (H1) and (H2) lead to
\begin{align*}
\int_{B_{r}}  &
\partial_{s}\big(k_{n}\ast
[\varphi\psi^2\tilde{u}^{1+\beta}]\big)\,dx+\frac{4\beta}{(1+\beta)}\nu
\int_{B_{r}}\abs{D\tilde{u}^{\frac{\beta+1}{2}}}^{2}\psi^2 \varphi\,dx + 2(1+\beta)\Lambda\,\int_{B_{r}}|D\tilde{u}|\psi |D\psi|
\,\tilde{u}^{\frac{\beta-1}{2}}\tilde{u}^{\frac{\beta+1}{2}}\varphi\,dx \nonumber \\ \geq & (1+\beta)\varphi\left[\int_{B_{r}} u_{0} \psi^2
\tilde{u}^{\beta} dx \, (h_n*k(\cdot+t_0)) + \int_{B_{r}}h_n*\left(\int_{0}^{t_{0}}[-\dot{k}(\cdot+t_0-\tau)]u(\tau,x)d\tau \right)\tilde{u}^{\beta}\psi^{2} dx\right]\nonumber\\
& \,-\int_0^s
\dot{k}_{n}(s-\tau)\big(\varphi(s)-\varphi(\tau)\big)
\big(\int_{B_{r}}\psi^2\tilde{u}^{1+\beta}\,dx\big)(\tau)\,d\tau  +\mathcal{R}_n(s) .
\end{align*}
Applying Young's inequality we obtain
\begin{align}\label{L121}
\int_{B_{r}}  &
\partial_{s}\big(k_{n}\ast
[\varphi\psi^2\tilde{u}^{1+\beta}]\big)\,dx+\frac{2\beta\nu}{(1+\beta)}
\int_{B_{r}}\abs{D\tilde{u}^{\frac{\beta+1}{2}}}^{2}\psi^2 \varphi\,dx + \frac{8\Lambda^2(\beta+1)}{|\beta| \nu}\,\int_{B_{r\sigma }}
\tilde{u}^{\beta+1}|D\psi|^2\varphi\,dx \nonumber \\
\geq & (1+\beta)\varphi\left[\int_{B_{r}} u_{0} \psi^2
\tilde{u}^{\beta} dx  (h_n*k(\cdot+t_0)) + \int_{B_{r}}h_n*\left(\int_{0}^{t_{0}}[-\dot{k}(\cdot+t_0-\tau)]u(\tau,x)d\tau \right)\tilde{u}^{\beta}\psi^{2} dx\right]\nonumber\\
& \,-\int_0^s
\dot{k}_{n}(s-\tau)\big(\varphi(s)-\varphi(\tau)\big)
\big(\int_{B_{r}}\psi^2\tilde{u}^{1+\beta}\,dx\big)(\tau)\,d\tau  +\mathcal{R}_n(s),\quad \text{a.a.}\;s\in (0,t_2-t_0).
\end{align}
We note that
\eqnsl{
-\int_0^{s}\!\!&\!\int_0^\xi
\dot{k}_{n}(\xi-\tau)\big(\varphi(\xi)-\varphi(\tau)\big)
\big(\int_{B_{r}}\psi^2\tilde{u}^{1+\beta}\,dx\big)(\tau)\,d\tau\,d\xi \nonumber\\
& \,=-\int_0^{s}
k_{n}(s-\tau)\big(\varphi(s)-\varphi(\tau)\big)
\big(\int_{B_{r}}\psi^2\tilde{u}^{1+\beta}\,dx\big)(\tau)\,d\tau\\
&\,+\int_0^{s}\!\!\!\dot{\varphi}(\xi)\int_0^\xi
k_{n}(\xi-\tau)
\big(\int_{B_{r}}\psi^2\tilde{u}^{1+\beta}\,dx\big)(\tau)\,d\tau\,d\xi\\
\geq &\,\int_0^{s}\!\!\!\dot{\varphi}(\xi)\int_0^\xi
k_{n}(\xi-\tau)
\big(\int_{B_{r}}\psi^2\tilde{u}^{1+\beta}\,dx\big)(\tau)\,d\tau\,d\xi,\quad s\in (0,t_2-t_0],
}{kndot}
because $\varphi$ is nonincreasing. Hence, if we integrate (\ref{L121}) in time,  drop the second term, apply the inequality above and finally pass to the limit with $n$ we arrive at
\begin{align*}
\int_{B_{r}}  &
(k*[\varphi\psi^2\tilde{u}^{1+\beta}])(s)\,dx+ \frac{8\Lambda^2 (\beta+1)}{|\beta| \nu}\,\int_{0}^{s}\int_{B_{r\sigma }}
\tilde{u}^{\beta+1}|D\psi|^2\varphi\,dxd\xi
\nonumber\\
- &\int_0^{s}\!\!\!\dot{\varphi}(\xi)\int_0^\xi
k(\xi-\tau)
\big(\int_{B_{r}}\psi^2\tilde{u}^{1+\beta}\,dx\big)(\tau)\,d\tau\,d\xi
\nonumber \\
\geq & \,(1+\beta)\int_{0}^{s}\varphi(\xi)\left[\int_{B_{r}} u_{0} \psi^2
\tilde{u}^{\beta} dx  k(\xi+t_0) + \int_{B_{r}}\int_{0}^{t_{0}}[-\dot{k}(\xi+t_0-\tau)]u(\tau,x)d\tau \tilde{u}^{\beta}\psi^{2} dx\right]d\xi.\nonumber\\
\end{align*}
We evaluate this inequality at $s=\sigma\eta\vdr$ and make use of the properties of $\varphi$ to find that
\[
\int_{0}^{\frac{\sigma+\sigma'}{2}\eta\vdr}\!\!k(\sigma\eta\vdr-\xi)\int_{B_{r}}\psi^2\tilde{u}^{1+\beta}(\xi,x)\,dx d\xi + \frac{c(\Lambda, \nu)(\beta+1)}{|\beta|} \frac{1}{r^{2}(\sigma-\sigma')^2}\,\int_{0}^{\frac{\sigma+\sigma'}{2}\eta\vdr}\int_{B_{r\sigma }}
\tilde{u}^{\beta+1}(\xi,x)\,dx ds
\]
\[
- \int_0^{\sigma\eta\vdr}\!\!\!\dot{\varphi}(\xi)\int_0^\xi
k(\xi-\tau)
\big(\int_{B_{r}}\psi^2\tilde{u}^{1+\beta}\,dx\big)(\tau)\,d\tau\,d\xi\nonumber \\
\]
\eqq{
 \geq (1+\beta)\int_{0}^{\sigma'\eta\vdr}\left[\int_{B_{r}} u_{0} \psi^2
\tilde{u}^{\beta} dx  k(\xi+t_0) + \int_{B_{r}}\int_{0}^{t_{0}}[-\dot{k}(\xi+t_0-\tau)]u(\tau,x)d\tau \tilde{u}^{\beta}\psi^{2} dx\right]d\xi.
}{infa}
We will estimate each term separately.
At first, since $k$ is nonincreasing, for any $0 <\xi < \sigma'\eta\vdr$ we have $k(\xi+t_0) \geq k(\eta\vdr+t_0)$, hence
\[
\int_{0}^{\sigma'\eta\vdr}\int_{B_{r}} u_{0} \psi^2
\tilde{u}^{\beta} dx\, k(\xi+t_0) d\xi \geq  k(\eta\vdr+t_0)\int_{0}^{\sigma'\eta\vdr}\int_{B_{r\sigma'}} u_{0}
\tilde{u}^{\beta}\, dx d\xi
\]
\eqq{
\geq   k(\eta\vdr+t_0)\int_{B_{r\sigma'}} u_{0}dx \inf_{x \in B_{r\sigma'}} \int_{0}^{\sigma'\eta\vdr}\tilde{u}^{\beta}(\xi,x)  d\xi.
}{infb}
Then, using convexity of $k$ we have
\[
\int_{0}^{\sigma'\eta\vdr}\int_{B_{r}}\int_{0}^{t_{0}}[-\dot{k}(\xi+t_0-\tau)]u(\tau,x) d\tau\, \tilde{u}^{\beta} \psi^{2}dxd\xi
\]
\eqq{
\geq  \inf_{x \in B_{r\sigma' }} \int_{0}^{\sigma'\eta\vdr}\tilde{u}^{\beta}(\xi,x) d\xi
\int_{0}^{t_{0}}[-\dot{k}(t_0+\sigma'\eta\vdr-\tau)]\int_{B_{r\sigma'}}u(\tau,x)dxd\tau.
}{infc}
Furthermore, using the properties of $\varphi$, monotonicity of $k$  and Lemma \ref{kernels}, we may estimate
\[
- \int_0^{\sigma\eta\vdr}\!\!\!\dot{\varphi}(\xi)\int_0^\xi
k(\xi-\tau)
\big(\int_{B_{r}}\psi^2\tilde{u}^{1+\beta}\,dx\big)(\tau)\,d\tau\,d\xi
\]
\eqq{
 \leq \frac{4(1*k)(\sigma\eta\vdr)}{(\sigma-\sigma')\eta\vdr} \int_{0}^{\sigma\eta\vdr}\int_{B_{r}}\psi^2\tilde{u}^{1+\beta}\,dxd\xi \leq \frac{4 \bar{c}\max\{1,\eta^{-1}\}}{\sigma-\sigma'}k_{1}(\vdr)\int_{0}^{\sigma\eta\vdr}\int_{B_{r}}\psi^2\tilde{u}^{1+\beta}\,dxd\xi.
}{infd}
Finally, by monotonicity of $k$, Lemma \ref{kernels} and Lemma \ref{ba}
\[
\int_{0}^{\frac{\sigma+\sigma'}{2}\eta\vdr}k(\sigma\eta\vdr-\xi)\int_{B_{r}}\psi^2\tilde{u}^{1+\beta}(s)\,dx d\xi \leq k\big(\frac{\sigma-\sigma'}{2}\eta\vdr\big)\int_{0}^{\frac{\sigma+\sigma'}{2}\eta\vdr}\int_{B_{r}}\psi^2\tilde{u}^{1+\beta}\,dx d\xi\]
\eqq{
\leq \frac{2\max\{1,\eta^{-1}\}}{\sigma'-\sigma}k_{1}(\vdr)\int_{0}^{\sigma\eta\vdr}\int_{B_{r}}\psi^2\tilde{u}^{1+\beta}\,dx d\xi.
}{infe}
Using (\ref{infb})-(\ref{infe}) and Lemma \ref{fi} in (\ref{infa}) we arrive at
\[
\left(\int_{0}^{t_{0}}[-\dot{k}(t_0+\sigma'\eta\vdr-\tau)]\int_{B_{r\sigma'}}u(\tau,x)dxd\tau + k(\eta\vdr+t_0) \int_{B_{r\sigma'}} u_{0}dx \right) \inf_{x \in B_{r\sigma' }} \int_{0}^{\sigma'\eta\vdr}\tilde{u}^{\beta}(\xi,x) d\xi
\]
\[
\leq \frac{1}{|\beta|(\beta+1)}\frac{C}{(\sigma-\sigma')^{2}}\frac{1}{r^{2}}\int_{0}^{\sigma\eta\vdr}\int_{B_{r\sigma}}\tilde{u}^{1+\beta}\,dxd\xi,
\]
where $C>0$ depends on $\overline{c},\eta,\Lambda, \nu$. Thus,
\[
\int_{0}^{t_{0}}[-\dot{k}(t_0+\sigma'\eta\vdr-\tau)]\int_{B_{\sigma'r}}u(\tau,x)dxd\tau + k(\eta\vdr+t_0) \int_{B_{\sigma'r}} u_{0}dx
\]
\[
\leq \frac{1}{|\beta|(\beta+1)}\frac{C}{(\sigma-\sigma')^{2}r^{2}} \sup_{x \in B_{r\sigma' }} \left(\int_{0}^{\sigma'\eta\vdr}\tilde{u}^{\beta}(\xi,x)d\xi\right)^{-1}\int_{0}^{\sigma\eta\vdr}\int_{B_{r\sigma}}\tilde{u}^{1+\beta}\,dxd\xi.
\]
Applying Jensen's inequality we get
\[
\int_{0}^{t_{0}}[-\dot{k}(t_0+\sigma'\eta\vdr-\tau)]\int_{B_{\sigma'r}}u(\tau,x)dxd\tau + k(\eta\vdr+t_0) \int_{B_{\sigma'r}} u_{0}dx
\]
\eqq{
 \leq \frac{1}{|\beta|(\beta+1)} \frac{C}{(\sigma-\sigma')^2r^{2}}\frac{1}{(\sigma'\eta\vdr)^{2}} \sup_{x \in B_{r\sigma' }}\int_{0}^{\sigma'\eta\vdr}\tilde{u}^{-\beta}(\xi,x)d\xi\int_{0}^{\sigma\eta\vdr}\int_{B_{r\sigma }}\tilde{u}^{1+\beta}\,dx d\xi.
}{uz1}
Since  $1+\beta \in (0,1)$, we may apply the weak Harnack estimate (\cite[Theorem 1.1]{krz}) to the result
\eqq{
\frac{1}{r\vdr}\int_{0}^{\sigma\eta\vdr}\int_{B_{r\sigma }}\tilde{u}^{1+\beta}(\xi,x)\,dx d\xi
\leq C \left(\inf_{((2-\sigma)\eta\vdr,2\eta\vdr)\times B_{r\sigma }} \tilde{u}\right)^{1+\beta},
}{uz2}
where $C=C(\nu,\Lambda,\sigma,\eta,\bar{c},\tilde{c},p_0)$. Let us denote $w = \tilde{u}^{-\frac{\beta}{2}}$. From the interpolation inequality in one dimension we infer
\[
\sup_{x \in B_{r\sigma' }}\int_{0}^{\sigma'\eta\vdr}\tilde{u}^{-\beta}(\xi,x)d\xi \leq \int_{0}^{\sigma'\eta\vdr} \norm{w(\xi,\cdot)}^{2}_{L_{\infty}(B_{r \sigma'})}d\xi
\]
\[
\leq c\int_{0}^{\sigma'\eta\vdr} \Big(\norm{Dw(\xi,\cdot)}^{\frac{4}{3}}_{L_{2}(B_{r\sigma' })}\norm{w(\xi,\cdot)}^{\frac{2}{3}}_{L_{1}(B_{r\sigma' })} + \norm{w(\xi,\cdot)}^{2}_{L_{2}(B_{r\sigma' })}\Big)\,d\xi.
\]
Applying H\"older's inequality with the pair of exponents $(3/2,3)$ we obtain further
\begin{align}
\sup_{x \in B_{r\sigma' }}\int_{0}^{\sigma'\eta\vdr}\tilde{u}^{-\beta}(\xi,x)d\xi & \le
 c \left(\int_{0}^{\sigma'\eta\vdr} \norm{Dw(\xi,\cdot)}^{2}_{L_{2}(B_{r\sigma' })}d\xi\right)^{\frac{2}{3}}\left(\int_{0}^{\sigma'\eta\vdr} \norm{w(\xi,\cdot)}^{2}_{L_{1}(B_{r\sigma' })}d\xi\right)^{\frac{1}{3}}
 \nonumber\\
 &\quad + c\int_{0}^{\sigma'\eta\vdr}\norm{w(\xi,\cdot)}_{L_{2}(B_{r\sigma'})}^{2}d\xi.
\label{uz4}
\end{align}
Note that for supersolutions of (\ref{MProb}), applying estimates  from the proof of \cite[Theorem 3.2]{krz} with appropriately chosen cylinders (see the estimate between (70) and (71) together with inequality (49), here $q_1=\infty, q_2 = \infty$) gives
\[
\norm{Dw}_{L_{2}(0,\sigma'\eta\vdr;L_{2}(B_{r\sigma' }))} \leq  \frac{C}{r(\sigma-\sigma')|\beta+1|} \norm{w}_{L_{2}(0,\sigma\eta\vdr;L_{2}(B_{r\sigma }))},
\]
where $C$ is a positive constant depending on $\Lambda,\nu,\eta, \delta, \overline{c}$. Since
\eqq{
\norm{w}_{L_{2}(0,\sigma\eta\vdr;L_{2}(B_{r\sigma }))}^{2} = \int_{0}^{\sigma\eta\vdr}\int_{B_{r\sigma }}\tilde{u}^{-\beta}(\xi,x)\,dx d\xi,
}{uz98}
we have
\eqq{
\left(\int_{0}^{\sigma'\eta\vdr} \norm{Dw(\xi,\cdot)}^{2}_{L_{2}(B_{r\sigma' })}d\xi\right)^{\frac{2}{3}} \leq \frac{C}{[|\beta+1|(\sigma-\sigma')r]^{\frac{4}{3}}} \left( \int_{0}^{\sigma\eta\vdr}\int_{B_{r\sigma }}\tilde{u}^{-\beta}(\xi,x)\,dx d\xi\right)^{\frac{2}{3}},
}{uz6}
where the constants $C$ depends only on $\Lambda,\nu,\eta, \delta, \overline{c}$. Next, H\"older's inequality implies
\eqq{
\left(\int_{0}^{\sigma'\eta\vdr} \norm{w(\xi,\cdot)}^{2}_{L_{1}(B_{r\sigma' })}d\xi\right)^{\frac{1}{3}}  \leq  r^{\frac{1}{3}} \left(\int_{0}^{\sigma'\eta\vdr}\int_{B_{r\sigma' }}\tilde{u}^{-\beta}(\xi,x)\,dx d\xi\right)^{\frac{1}{3}}.
}{uz7}
Using (\ref{uz98}), (\ref{uz6}) and (\ref{uz7}) in (\ref{uz4})
leads to
\[
\sup_{x \in B_{r\sigma' }}\int_{0}^{\sigma'\eta\vdr}\tilde{u}^{-\beta}(\xi,x)d\xi \leq \frac{C}{[|\beta+1|(\sigma-\sigma')]^{\frac{4}{3}}} \frac{1}{r}\int_{0}^{\sigma\eta\vdr}\int_{B_{r\sigma }}\tilde{u}^{-\beta}(\xi,x)\,dx d\xi.
\]
Recalling that $-\beta \in (0,1)$ we apply the weak Harnack inequality (\cite[Theorem 1.1]{krz}) to the result
\eqq{
\sup_{x \in B_{r\sigma' }}\frac{1}{\vdr}\int_{0}^{\sigma'\eta\vdr}\tilde{u}^{-\beta}(s,x)ds \leq \frac{C}{[|\beta+1|(\sigma-\sigma')]^{\frac{4}{3}}} \left(\inf_{((2-\sigma)\eta\vdr,2\eta\vdr)\times B_{r\sigma }} \tilde{u}\right)^{-\beta},
}{uz8}
where $C=C(\Lambda,\nu,\eta, \sigma,\delta, \overline{c},\tilde{c},p_0)$. We fix $\beta \in (-1,0)$. Then (\ref{uz2}) together with (\ref{uz8}) applied in (\ref{uz1}) leads to
\[
\int_{0}^{t_{0}}[-\dot{k}(t_0+\sigma'\eta\vdr-\tau)]\int_{B_{\sigma'r}}u(\tau,x)dxd\tau + k(\eta\vdr+t_0) \int_{B_{\sigma'r}} u_{0}dx
\]
\[
\leq \frac{C}{r} (\sigma-\sigma')^{-4}  \inf_{((2-\sigma)\eta\vdr,2\eta\vdr)\times B_{r\sigma }} \tilde{u},
\]
which finishes the proof.
\end{proof}
\begin{bemerk1} \label{RemA}
{\em Note that up to estimate \eqref{uz2}, the above line of arguments is valid for all space dimensions (up to some minor adjustments). The assumption on the space dimension plays a crucial role when applying the interpolation inequality. The embedding $H^1_2\hookrightarrow L_\infty$ (on balls), which is used here, is only true in dimension one.
}
\end{bemerk1}
\section{Estimates for subsolutions} \label{mvi}

\begin{satz}\label{subest2}
Let $T>0$, and $\Omega\subset \iR$ be a bounded
domain. Suppose the assumptions (H1)--(H3) and (\ref{pc}), (\ref{ak1}), (\ref{asholder}) are satisfied. Let
$\eta>0$ and $\delta\in (0,1)$ be fixed. Then there exists $r^{*} \in (0,r_0)$ such that for any $0<r \leq r^{*}$, any $t_0\in(0,T]$
 with $t_0-\eta \vdr \ge 0$, any ball
$B_r(x_{0})\subset\Omega$, and any weak subsolution $u\ge 0$ of (\ref{MProb}) in $(0,t_0)\times B_r(x_{0})$ with $u_0\ge 0$
in $B_r(x_{0})$, there holds
\[
\frac{1}{\vdr}\int_{t_0-\sigma'\eta\vdr}^{t_0}  \sup_{x\in B_{r\sigma' }} u(t,x) \, dt \leq  \frac{C}{\sigma-\sigma'}\left(\frac{1}{r \vdr}\int_{t_0-\sigma\eta\vdr}^{t_{0}}\int_{B_{ r\sigma}}u^{2}\,dxdt\right)^{\frac{1}{2}}
\]
\[
+ C r  \left( k(t_0-\sigma\eta\vdr)\int_{B_{r\sigma}}u_{0}\,dx + \frac{1}{(\sigma-\sigma')^{\omega}}\int_{0}^{t_0-\sigma'\eta\vdr}[-\dot{k}(t_0-\tau)]\int_{B_{r\sigma}}u(\tau,x)\,dxd\tau\right).
\]

Here
$0<\delta\leq \sigma'<\sigma\le 1$, $C=C(\nu,\Lambda,\delta,\eta,\overline{c})$ and $\omega$ comes from (\ref{asholder}).
\end{satz}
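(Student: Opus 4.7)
The approach is an adaptation of Moser's iteration to the nonlocal-in-time setting, with the novelty being the careful tracking of the initial datum and history contributions through the iteration. The starting point is the time-shifted regularised weak formulation \eqref{shiftprob} with shift $t_1 = t_0 - \sigma\eta\vdr$, tested with $\psi^2(\tilde u + \varepsilon)^{q-1}$ for an exponent $q \geq 2$, a small $\varepsilon > 0$ sent to $0^+$ at the end, and a spatial cut-off $\psi$ supported in $B_{r\sigma}(x_0)$ with $\psi \equiv 1$ on $B_{r\sigma'}(x_0)$ and $|D\psi| \leq 2/[r(\sigma-\sigma')]$. Using \eqref{fundidentity} with $H(v) = v^q/q$ applied to the shifted subsolution $\tilde u + \varepsilon$ (which, modulo a lower-order correction, is still a subsolution) yields, by convexity of $H$ and positivity of $-\dot k$, the pointwise estimate $(\tilde u + \varepsilon)^{q-1}\partial_s(k_n\ast[\tilde u + \varepsilon]) \geq q^{-1}\partial_s(k_n\ast(\tilde u+\varepsilon)^q)$ plus manageable terms.

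I would then multiply by a \emph{nondecreasing} time cut-off $\varphi \in C^1([0,\sigma\eta\vdr])$ with $\varphi \equiv 0$ near $s=0$ and $\varphi \equiv 1$ on $[\sigma'\eta\vdr,\sigma\eta\vdr]$, use Lemma \ref{comm2} to commute $\varphi$ with $\partial_s(k_n\ast\cdot)$, integrate in $s$, send $n\to\infty$, and invoke the ellipticity (H2). This yields an energy-type inequality for $w := (\tilde u+\varepsilon)^{q/2}$ controlling its $L^\infty_t L^2_x$ and $L^2_t H^1_x$ norms. The initial datum and history contributions are paired with $\psi^2(\tilde u + \varepsilon)^{q-1}$; H\"older's inequality in space extracts a factor $\sup_{B_{r\sigma'}}(\tilde u + \varepsilon)^{q-1}$ on the working cylinder, at the cost of the spatial integral $\int_{B_{r\sigma}} u(\tau,x)\,dx$ and the scalar weights $k(s+t_1)$ or $-\dot k(s+t_1-\tau)$. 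Assumption \eqref{asholder} is then invoked to bound $-\dot k(s+t_1-\tau) \leq c(\sigma-\sigma')^{-\omega}[-\dot k(t_0-\tau)]$ uniformly for $s$ in the support of $\varphi$ and $\tau \in (0,t_1)$, which produces both the exponent $\omega$ and the kernel $-\dot k(t_0-\tau)$ appearing in the final estimate.

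With the energy inequality in hand, the parabolic embedding \eqref{sobolev}, valid because $N=1$, converts the bound on $w$ into a gain of integrability of the form $L^{2p}_t L^2_x \cap L^2_t H^1_x \hookrightarrow L^a_t L^b_x$ for suitable $(a,b)$, yielding a recursion between telescoping cylinders of the type required by Lemma \ref{moserit1}. Moser iteration then upgrades the $L^2$ norm of $\tilde u + \varepsilon$ on the largest cylinder into an $L^\infty$ bound on the smallest one, which certainly dominates the $L^1_t L^\infty_x$ average appearing on the left-hand side of the theorem.

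The principal technical obstacle is that a naive iteration lets the non-iterated history and initial data terms accumulate prefactors that would blow up over infinitely many steps, since they cannot be absorbed into the recursion in the same way as the local contribution involving $u$ on the working cylinder. To deal with this, I would combine the Moser recursion with the geometric iteration Lemma \ref{geometric}, applied to the map sending a cylinder-scaling parameter to the corresponding sup-norm of $\tilde u + \varepsilon$: at each step intermediate sup-norms on slightly larger cylinders reappear on the right-hand side, and Lemma \ref{geometric} allows their absorption into the left-hand side at the cost of fixed negative powers of $\sigma-\sigma'$ together with the factor $(\sigma-\sigma')^{-\omega}$ inherited from \eqref{asholder}. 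After sending $\varepsilon \to 0^+$, the claimed estimate follows.
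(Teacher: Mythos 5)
Your plan runs a full Moser iteration (testing with $\psi^2(\tilde u+\varepsilon)^{q-1}$ for all $q\ge 2$ and invoking Lemma \ref{moserit1}) and then observes that the resulting $L_\infty$ bound dominates the $L_1$-in-time, $L_\infty$-in-space average on the left of the theorem. This is not the paper's argument, and it has a genuine gap. The history term and the $u_0$ term are controlled on the right-hand side of the theorem only through the spatial $L_1$ quantity $\int_{B_{r\sigma}}u(\tau,x)\,dx$. Consequently, when you pair $\int_0^{t_1}[-\dot k(s+t_1-\tau)]u(\tau,x)\,d\tau$ with $\psi^2(\tilde u+\varepsilon)^{q-1}$, H\"older in space forces you to take $\sup_x$ of the power of $u$, exactly as you note. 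This means that at \emph{every} step of the iteration the right-hand side contains a term of the form $(\text{history})\cdot\int\sup_x(\tilde u+\varepsilon)^{q-1}\,ds$, which is not a norm of $w=(\tilde u+\varepsilon)^{q/2}$ of the homogeneous form \eqref{mositer1}; in particular the recursion can never be closed starting from the plain $L_2$ norm of $u$ in space-time, so the claim that Moser iteration ``upgrades the $L^2$ norm \dots into an $L^\infty$ bound'' is unsubstantiated. Your proposed repair via Lemma \ref{geometric} does not apply either: that lemma absorbs a term $\theta f(t)$ with $\theta<1$ and the \emph{same} functional $f$, whereas here the offending term at step $q$ involves the power $u^{q-1}$ of the iterated quantity; homogenising by Young's inequality raises the history quantity to the power $q\to\infty$ with constants that diverge along the iteration. (The paper's Theorem \ref{subest1} does make a Moser iteration work, but only by perturbing $u$ by the additive constant $b$ built from the history and initial data, and even then its conclusion is an $L_\infty$ bound in terms of $\frac{1}{\Phi(r)}\int\sup_x u\,dt$ --- i.e.\ it still \emph{needs} the present theorem as input rather than implying it.)

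The paper's proof of this statement involves no iteration in the exponent at all. One tests only with $\psi^2\tilde u$ (i.e.\ $q=2$), multiplies by a time cut-off, convolves with $l$, and obtains the two energy bounds \eqref{uz17}--\eqref{uz18} with a right-hand side $\int_0^{t_0-t_1}F(s)\,ds$ in which, by \eqref{uz14}--\eqref{uz15} and assumption \eqref{asholder}, the history and $u_0$ contributions appear multiplied by $\int\norm{\tilde u\psi}_{L_\infty(B_{r\sigma})}\,ds$ to the \emph{first} power. The one-dimensional interpolation inequality $\norm{v}_{L_\infty}^2\le c\norm{Dv}_{L_2}\norm{v}_{L_2}$ then bounds $\int\norm{\tilde u\psi}_{L_\infty}^2\,ds$ by the product of these two energy quantities, which is where the space-time $L_2$ norm of $u$ enters. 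After Jensen's and Young's inequalities the term linear in $\int\norm{\tilde u}_{L_\infty}\,ds$ is absorbed with coefficient $\tfrac12$, and Lemma \ref{geometric} --- applied to $f(\rho)=\int_{t_0-\rho\sigma\eta\Phi(r)}^{t_0}\norm{u}_{L_\infty(B_{r\sigma\rho})}\,dt$ over the auxiliary parameters $\rho'<\rho$ --- yields the claim. This single-step interpolation argument is precisely the place where the restriction $N=1$ is used, and it cannot be replaced by the (dimension-independent) Moser scheme you propose.
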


\begin{proof}
Again, to abbreviate the notation we denote $B_{r} := B_{r}(x_{0})$.
We consider $r \in (0,r^{*})$, where $r^{*}>0$ comes from Lemma \ref{scaling}. We fix $\sigma'$ and $\sigma$ such that $\delta\le \sigma'<\sigma\le
1$, and for any $\frac{\sigma'}{\sigma}<\rho'<\rho\le 1$, we introduce
$t_{1} = t_{0}-\rho\sigma\eta\vdr$ and $t_{2} = t_{0}-\rho'\sigma\eta\vdr$. Then $0\le t_1<t_2<t_0$. We
shift the time by setting ${s}=t-t_1$ and  $\tilde{f}(s)=f(s+t_1)$, $s\in
(0,t_0-t_1)$, for functions $f$ defined on $(t_1,t_0)$. Since
 $u$ is a weak subsolution of
(\ref{MProb}) in $(0,t_0)\times B_r$, due to (\ref{shiftprob}) for a.a. $s\in
(0,t_0-t_1)$ we have
\[
\int_{B_r} \Big(v \partial_s(k_{n}\ast \tilde{u})+\big(h_n\ast
[\tilde{A}D\tilde{u}]|Dv\big)\Big)\,dx
\]
\begin{equation} \label{sup0}
\leq   \int_{B_r} u_{0} v\, dx\, (h_n*k(\cdot+t_1))  + \int_{B_r}h_n*\left(\int_{0}^{t_{1}}[-\dot{k}(\cdot+t_1-\tau)]u(\tau,x)\,d\tau \right) v\, dx ,
\end{equation}
for any nonnegative function $v\in \oH^1_2(B_r)$. In (\ref{sup0}) we choose the test function $v=\psi^2
\tilde{u}$, where
 $\psi\in C^1_0(B_{\rho r})$ so that
$0\le \psi\le 1$, $\psi=1$ in $B_{\rho'r\sigma }$, supp$\,\psi\subset B_{\rho r\sigma }$, and $|D \psi|\le 2/[r\sigma(\rho-\rho')]$.
The fundamental
identity (\ref{fundidentity}), applied to $k=k_{n}$ and the
convex function $H(y)=\frac{1}{2}y^{2}$, $y>0$, implies that for a.a. $(s,x)\in (0,t_0-t_1)\times B_r$
\begin{align}
 \tilde{u}\partial_{s}(k_{n}\ast \tilde{u}) & \ge \,\frac{1}{2}\,\partial_{s}
(k_{n}\ast\tilde{u}^{2}). \label{sup1}
\end{align}
Applying
\[ Dv=2\psi D\psi \,\tilde{u}+\psi^2 D \tilde{u}\]
 together with (\ref{sup1}) in (\ref{sup0}), we obtain for
a.a. $s\in (0,t_0-t_1)$
\[
\frac{1}{2} \int_{B_{r\sigma }}\psi^2\partial_{s}
(k_{n}\ast\tilde{u}^{2})\,dx+\int_{B_{r\sigma }}\big(h_n\ast
[\tilde{A}D\tilde{u}]|\psi^2 D
\tilde{u}\big)\,dx \]
\[
\le -2\int_{B_{r\sigma }}\big(h_n\ast [\tilde{A}D\tilde{u}]|\psi D\psi
\,\tilde{u}\big)\,dx
\]
\eqq{
+\int_{B_{r\sigma }}\psi^2\tilde{u}u_{0}\,dx\, (h_n*k(\cdot+t_1))+ \int_{B_{r\sigma }} h_n*\left( \int_{0}^{t_{1}}[-\dot{k}(\cdot+t_1-\tau)]u(\tau,x)d\tau \right)\psi^2\tilde{u}\,dx.
}{sup2}

Now, we choose another cut-off function $\varphi\in C^1([0,t_0-t_1])$ with the following properties: $0\le
\varphi\le 1$, $\varphi=0$ in $[0,(t_2-t_1)/2]$, $\varphi=1$ in
$[t_2-t_1,t_0-t_1]$, and $0\le \dot{\varphi}\le 4/(t_2-t_1)$.
Multiplying (\ref{sup2}) by $2\varphi(s)$,
and convolving the resulting inequality with $l$ yields
\begin{align}
\int_{B_{r\sigma }} & l\ast
\big(\varphi\partial_{s}(k_{n}\ast
[\psi^2\tilde{u}^{2}])\big)\,dx+2\,l\ast\int_{B_{r\sigma }}\big(h_n\ast
[\tilde{A}D\tilde{u}]|\psi^2 D
\tilde{u}\big)\varphi\,dx \nonumber\\
\le \, & \,-4\,l\ast\int_{B_{r\sigma }}\big(h_n\ast
[\tilde{A}D\tilde{u}]|\psi D\psi
\,\tilde{u}\big)\varphi\,dx+2\,l\ast\left(\int_{B_{r\sigma }}\psi^2\tilde{u}u_{0}
\,dx (h_n*k(\cdot+t_1))\varphi\right)\nonumber\\
&  +2 \int_{B_{r \sigma }}l*\left(h_n*\left(\int_{0}^{t_{1}}[-\dot{k}(\cdot+t_1-\tau)]u(\tau,x)d\tau \right)\varphi \tilde{u} \psi^2\right)dx, \label{sup3}
\end{align}
 a.e. in $(0,t_0-t_1)$. Lemma \ref{comm} implies
\begin{align}
\int_{B_{r\sigma}} l\ast &
\big(\varphi\partial_{s}(k_{n}\ast
[\psi^2\tilde{u}^{2}])\big)\,dx \ge \int_{B_{r\sigma}} \varphi
l\ast \big(\partial_{s}(k_{n}\ast
[\psi^2\tilde{u}^{2}])\big)\,dx\nonumber\\
& -\int_0^s l(s-\xi)\dot{\varphi}(\xi)
\big(k_{n}\ast
\int_{B_{r\sigma}}\psi^2\tilde{u}^{2}\,dx\big)(\xi)\,d\xi.
\label{sup4}
\end{align}
Furthermore, from $l*k=1$, $k_{n}=k\ast h_n$  and
\[
k_{n}\ast [\psi^2\tilde{u}^{2}]\in
{}_0H^1_1([0,t_0-t_1];L_1(B_{r\sigma}))
\]
 we have
\begin{equation} \label{sup5}
l\ast \partial_{s}(k_{n}\ast
[\psi^2\tilde{u}^{2}])=\partial_s(l\ast
k_{n}\ast [\psi^2\tilde{u}^{2}])=h_n\ast
(\psi^2\tilde{u}^{2}).
\end{equation}
If we combine (\ref{sup3}), (\ref{sup4}) and (\ref{sup5}), and pass to the limit with
$n$ (choosing an appropriate subsequence, if
necessary), we arrive at
\begin{align}
& \int_{B_{r\sigma}}\varphi\psi^2\tilde{u}^{2}\,dx+
2\,l\ast\int_{B_{r\sigma}}\big(\tilde{A}D\tilde{u}|\psi^2
D\tilde{u}\big)\varphi\,dx\nonumber\\
\le \, &
\,-4\,l\ast\int_{B_{r\sigma}}\big(\tilde{A}D\tilde{u}|\psi
D\psi
\,\tilde{u}\big)\varphi\,dx+2\,l\ast(\int_{B_{r\sigma}}\psi^2\tilde{u}u_{0}\, dx\,
k(\cdot+t_1)\varphi) \nonumber\\
& +l*(\dot{\varphi}
k\ast
\int_{B_{r\sigma}}\psi^2\tilde{u}^{2}\,dx)  \nonumber\\
& +2 \int_{B_{r\sigma}}l*\left(\int_{0}^{t_{1}}[-\dot{k}(\cdot+t_1-\tau)]u(\tau,x)\,d\tau\, \varphi \tilde{u} \psi^2\right)dx,
\quad\mbox{a.a.}\;s\in(0,t_0-t_1).
\label{sup6}
\end{align}

Making use of (H1) and (H2) we find that
\eqq{
\int_{B_{r\sigma}}\varphi\psi^2\tilde{u}^2\,dx+\,\nu \,l\ast\int_{B_{r\sigma}}\varphi \psi^2|D\tilde{u}|^2\,dx
\le l\ast F,\quad\mbox{a.a.}\;s\in(0,t_0-t_1),
}{uz11}
where
\eqnsl{
F(s) =&\,  \,\frac{4\Lambda^2}{\nu}\, \int_{B_{r\sigma}}
|D\psi|^2\varphi \tilde{u}^2\,dx
+\dot{\varphi}(s)
\big(k\ast
\int_{B_{r\sigma}}\psi^2 \tilde{u}^{2}\,dx\big)(s) \\
&
+ 2\,\int_{B_{r\sigma}}\psi^2 \tilde{u} u_{0}\, dx\,
k(s+t_1)\varphi(s) + 2\int_{B_{r\sigma}}\varphi \tilde{u} \psi^2\int_{0}^{t_{1}}[-\dot{k}(s+t_1-\tau)]u(\tau,x)\,d\tau dx.
}{defeF}
Dropping the second term on the left-hand side of \eqref{uz11} and applying Young's inequality for convolutions and the properties of
$\varphi$ we then infer that
\eqq{
\int_{t_2-t_1}^{t_0-t_1} \norm{
\tilde{u}\psi}^2_{L_{2}(B_{r\sigma})}\,ds \,\le
\|l\|_{L_1([0,t_0-t_1])} \int_0^{t_0-t_1} \!\!\!\!F(s)\,ds.
}{uz17}
On the other hand, dropping the first term on the left-hand-side of \eqref{uz11}, convolving
the resulting inequality with $k$ and evaluating at
$s=t_0-t_1$, we get
\eqq{
\int_{t_2-t_1}^{t_0-t_1}\int_{B_{r\sigma}}\psi^2|D\tilde{u}|^2\,dx\,ds \le
\,\frac{1}{\nu}\,\int_0^{t_0-t_1} \!\!\!\!F(s)\,ds.
}{uz18}
In order to estimate the $L_1-$ norm of $F$, we note that
\[
\dot{\vf}(s)(k*\int_{B_{r\sigma}}\psi^{2}\tilde{u}^{2}dx)(s)\leq \frac{4}{\sigma\eta(\rho-\rho')\vdr}(k*\int_{B_{\rho r\sigma }}\tilde{u}^{2}dx)(s)
\]
and consequently, by Young's inequality for convolutions we get
\[
\int_{0}^{t_{0}-t_{1}}\dot{\vf}\cdot k*\int_{B_{\rho r\sigma }}\psi^{2}\tilde{u}^{2}\,dxds \leq  \frac{4}{\sigma\eta(\rho-\rho')\vdr} (1*k*\int_{B_{\rho r\sigma }}\tilde{u}^{2}dx)(t_0-t_1)
\]
\[
 \leq \frac{4}{\sigma\eta(\rho-\rho')\vdr} (1*k)(\rho\sigma\eta\vdr)\int_{0}^{t_{0}-t_{1}}\int_{B_{\rho r\sigma }}\tilde{u}^{2}\,dxds
\]
\eqq{
\leq \frac{4}{(\rho-\rho')}  \frac{(1*k)(\sigma\eta \vdr)}{\sigma\eta \vdr}\int_{0}^{t_{0}-t_{1}}\int_{B_{\rho r\sigma }}\tilde{u}^{2}dxds
\leq  \frac{4 \overline{c} \max\{1,\eta^{-1}\}}{\sigma(\rho-\rho')} \ki(\vdr)
 \int_{0}^{t_{0}-t_{1}}\int_{B_{\rho r\sigma }}\tilde{u}^{2}dxds,
}{uz13}
where in the last estimate we used the fact that $k$ is nonincreasing and Lemma \ref{kernels}.
Then
\eqq{
\int_{0}^{t_{0}-t_{1}}\int_{B_{r\sigma}}\abs{D \psi}^{2}\tilde{u}^{2}\,dxds \leq \frac{4}{r^{2}\sigma^{2}(\rho-\rho')^{2}}\int_{0}^{t_{0}-t_{1}}\int_{B_{\rho r\sigma}}\tilde{u}^{2}\,dxds.
}{pierw1}
Further, we have
\eqq{
\int_{0}^{t_0-t_1} \int_{B_{r\sigma}}\psi^2 \tilde{u} u_{0} \,dx\, k(s+t_1)\varphi(s)\,ds \leq k(t_1)\int_{B_{r\sigma}}u_{0}\,dx \int_{\frac{t_2-t_1}{2}}^{t_0-t_1}  \norm{\tilde{u}\psi}_{L_{\infty}(B_{r\sigma})}ds.
}{uz14}
We now come to the last term of $F$.
Using firstly the convexity of $k$ and then the assumption (\ref{asholder}) for $s\in (\frac{t_2-t_1}{2},t_0-t_1)$ and $\tau \in [0,t_0-\sigma'\eta\vdr]$ we may estimate
\[
-\dot{k}(s+t_1-\tau)  \leq -\dot{k}\big(t_0-\tau - \sigma\eta\vdr\frac{\rho+\rho'}{2}\big)\leq -\dot{k}\left((t_0-\tau)(1-\frac{\sigma\eta\vdr(\rho+\rho')}{2(t_0-\tau)})\right)
\]
\[
 \leq c(\eta) [-\dot{k}(t_0-\tau)] \left(1-\frac{\rho+\rho'}{2\rho})\right)^{-\omega} \leq \frac{c(\eta)}{(\rho'-\rho)^{\omega}}[-\dot{k}(t_0-\tau)].
\]

Hence, using again $\rho \geq \frac{\sigma'}{\sigma}$ we see that
\[
 \int_{0}^{t_0-t_1}\int_{B_{r\sigma}}\varphi(s) \tilde{u}(s,x) \psi(x)^2\int_{0}^{t_{1}}[-\dot{k}(s+t_1-\tau)]u(\tau,x)\,d\tau dxds
\]
\eqq{
\leq \frac{c(\eta)}{(\rho'-\rho)^{\omega}} \int_{\frac{t_2-t_1}{2}}^{t_0-t_1}  \norm{\tilde{u}\psi}_{L_{\infty}(B_{r\sigma})}ds\int_{0}^{t_0-\sigma'\eta\vdr}[-\dot{k}(t_0-\tau)]\int_{B_{r\sigma}}\psi(x) u(\tau,x)\,dxd\tau.
}{uz15}
Combining (\ref{uz13}) - (\ref{uz14}) we arrive at
\[
\int_{0}^{t_0-t_1}F(s)ds \leq    \frac{C}{r^2\sigma^{2}(\rho-\rho')^{2}}\int_{0}^{t_{0}-t_{1}}\int_{B_{ \rho r\sigma}}\tilde{u}^{2}\,dxds
\]
\eqq{
+ C \int_{\frac{t_2-t_1}{2}}^{t_0-t_1}  \norm{\tilde{u}\psi}_{L_{\infty}(B_{r\sigma})}ds
\cdot  \left( k(t_1)\int_{B_{r\sigma}}u_{0}\,dx + \frac{1}{(\rho-\rho')^{\omega}}\int_{0}^{t_0-\sigma'\eta\vdr}[-\dot{k}(t_0-\tau)]\int_{B_{r\sigma}}u(\tau,x)dxd\tau\right)
}{uz20}
with positive $C$ depending on $\nu,\Lambda, \eta,\delta$. Applying interpolation inequality and then using estimates (\ref{uz17}), (\ref{uz18}), (\ref{pierw1}), (\ref{uz20}) and  Lemma \ref{scaling} we find that
\[
\int_{t_2-t_{1}}^{t_0-t_1}  \norm{\tilde{u}\psi}_{L_{\infty}(B_{r\sigma})}^{2}ds \leq c\norm{D(\tilde{u}\psi)}_{L_{2}((t_2-t_{1},t_0-t_1);B_{r\sigma})}
\norm{\tilde{u}\psi}_{L_{2}((t_2-t_{1},t_0-t_1);B_{r\sigma})}
\]
\[
\leq  \frac{C}{r\sigma^{2}(\rho-\rho')^{2}}\int_{0}^{t_{0}-t_{1}}\int_{B_{\rho r\sigma}}\tilde{u}^{2}dxds + r \int_{\frac{t_2-t_1}{2}}^{t_0-t_1}  \norm{\tilde{u}\psi}_{L_{\infty}(B_{r\sigma})}ds
\]
\[
\times C \left( k(t_1)\int_{B_{r\sigma}}u_{0}dx + \frac{1}{(\rho-\rho')^{\omega}}\int_{0}^{t_0-\sigma'\eta\vdr}[-\dot{k}(t_0-\tau)]\int_{B_{r\sigma}}u(\tau,x)\,dxd\tau\right),
\]
where $C=C(\nu,\Lambda,\overline{c},\eta,\delta)$.
Applying Jensen's inequality we then deduce further that
\[
\frac{1}{\eta^{\frac{1}{2}}\vdr^{\frac{1}{2}}}\int_{t_2-t_{1}}^{t_0-t_1}  \norm{\tilde{u}}_{L_{\infty}(B_{r\sigma\rho'})}ds \leq \frac{C}{\sigma(\rho-\rho')}\left(\frac{1}{r}\int_{0}^{t_{0}-t_{1}}\int_{B_{\rho r\sigma}}\tilde{u}^{2}dxds\right)^{\frac{1}{2}}
\]
\[
+ C\left(\int_{\frac{t_2-t_1}{2}}^{t_0-t_1}  \norm{\tilde{u}}_{L_{\infty}(B_{r\sigma\rho})}ds\right)^{\frac{1}{2}} r^{\frac{1}{2}}\left( k(t_1)\int_{B_{r\sigma}}\!\!u_{0}dx +\frac{1}{(\rho-\rho')^{\omega}} \int_{0}^{t_0-\sigma'\eta\vdr}\!\!\!\!\!\![-\dot{k}(t_0-\tau)]\int_{B_{r\sigma}}\!\!\!u(\tau,x)dxd\tau\right)^{\frac{1}{2}}.
\]
Using Young's inequality
and coming back to the original variable we get
\[
\int_{t_2}^{t_0}  \norm{u}_{L_{\infty}(B_{r\sigma\rho'})}dt \leq \frac{1}{2}\int_{\frac{t_2+t_1}{2}}^{t_0}  \norm{u}_{L_{\infty}(B_{r\sigma\rho})}dt + \frac{C \vdr^{\frac{1}{2}}}{\sigma(\rho-\rho')}\left(\frac{1}{r}\int_{t_1}^{t_{0}}\int_{B_{ r\sigma}}u^{2}dxdt\right)^{\frac{1}{2}}
\]
\[
+C r \vdr \left( k(t_1)\int_{B_{r\sigma}}u_{0}dx + \frac{1}{(\rho-\rho')^{\omega}}\int_{0}^{t_0-\sigma'\eta\vdr}[-\dot{k}(t_0-\tau)]\int_{B_{r\sigma}}u(\tau,x)dxd\tau\right).
\]
Using $t_2+t_1 \geq 2t_1$, $\rho \leq 1$ and denoting
\[
f(\rho) = \int_{t_0-\rho\sigma\eta\vdr}^{t_0}  \norm{u}_{L_{\infty}(B_{r\sigma\rho})}dt
\]
we obtain
\[
f(\rho') \leq \frac{1}{2}f(\rho) +  \frac{C \vdr^{\frac{1}{2}}}{\sigma(\rho-\rho')}\left(\frac{1}{r}\int_{t_0-\sigma\eta\vdr}^{t_{0}}\int_{B_{ r\sigma}}u^{2}dxdt\right)^{\frac{1}{2}}
\]
\[
+ C r \vdr \left( k(t_0-\sigma\eta\vdr)\int_{B_{r\sigma}}u_{0}dx + \frac{1}{(\rho-\rho')^{\omega}}\int_{0}^{t_0-\sigma'\eta\vdr}[-\dot{k}(t_0-\tau)]\int_{B_r\sigma}u(\tau,x)dxd\tau\right).
\]
Applying Lemma \ref{geometric}, we arrive at
\[
\int_{t_0-\sigma'\eta\vdr}^{t_0}  \norm{u}_{L_{\infty}(B_{r\sigma' })}dt \leq C \frac{\vdr^{\frac{1}{2}}}{\sigma-\sigma'}\left(\frac{1}{r}\int_{t_0-\sigma\eta\vdr}^{t_{0}}\int_{B_{ r\sigma}}u^{2}dxdt\right)^{\frac{1}{2}}
\]
\[
+ C r \vdr \left( k(t_0-\sigma\eta\vdr)\int_{B_{r\sigma}}u_{0}dx + \frac{1}{(\sigma-\sigma')^{\omega}}\int_{0}^{t_0-\sigma'\eta\vdr}[-\dot{k}(t_0-\tau)]\int_{B_{r\sigma}}u(\tau,x)dxd\tau\right),
\]
where $C=C(\nu, \Lambda,\overline{c},\eta,\delta)$, and the proof is finished.
\end{proof}

\begin{satz} \label{subest1}
Let $T>0$, and $\Omega\subset \iR$ be a bounded
domain. Suppose the assumptions (H1)--(H3) and (\ref{pc}), (\ref{ak1}) and (\ref{asholder}) are satisfied. Let
$\eta>0$ and $\delta\in (0,1)$ be fixed. Then there exists $r^{*}>0$ such that for any $0<r \leq r^{*}$, any $t_0\in(0,T]$
 with $t_0-\eta \vdr \ge 0$, any ball
$B_r(x_{0})\subset\Omega$, and any weak subsolution $u\ge 0$ of (\ref{MProb}) in $(0,t_0)\times B_r(x_{0})$ with $u_0\ge 0$
in $B_r(x_{0})$, there holds
\begin{align*}
\esup_{U_{\sigma'}}{u} & \le \frac{C}{(\sigma-\sigma')^{\tau_0}} \Big(\frac{1}{\vdr}\int_{t_0-\sigma\eta\vdr}^{t_0} \sup_{x\in B_{r \sigma }} u  + k(t_0-\sigma\eta\vdr)\,r \int_{B_{r \sigma}} u_0\, dx \\
& \quad\quad
+r\int_{0}^{t_0-\sigma'\eta\vdr}[-\dot{k}(t_0-\tau)]\int_{B_{r\sigma}}u(\tau,x)\,dx\,d\tau\Big).
\end{align*}
Here $U_\sigma=(t_0-\sigma\eta \vdr,t_0)\times B_{r\sigma }(x_0)$,
$0<\delta\leq \sigma'<\sigma\le 1$, $C=C(\nu,\Lambda,\delta,\eta,\overline{c})$ and
$\tau_0 = \tau_0(\omega) > 0$.
\end{satz}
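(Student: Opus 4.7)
\medskip
\noindent\emph{Proof plan.}
The idea is to run a Moser iteration on powers $u^q$ ($q\ge 1$) using Lemma \ref{moserit1}, and then to convert the resulting $L^{2p}$-bound into the claimed bound involving $\frac{1}{\vdr}\int \sup_x u\,dt$ by means of an elementary interpolation and an absorption argument via Lemma \ref{geometric}. The whole scheme mirrors the derivation of Theorem \ref{subest2}, the difference being that one now tests against $\psi^2\tilde u^{2q-1}$ rather than $\psi^2\tilde u$ and invokes the fundamental identity (\ref{fundidentity}) for the convex function $H(y)=y^{2q}/(2q)$.

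More precisely, starting from the shifted regularised weak formulation (\ref{shiftprob}) with $v=\psi^2\tilde u^{2q-1}$, repeating the steps that led to (\ref{uz11})--(\ref{defeF}) (cut-off $\varphi$ in time, convolution with $l$, $l\ast k=1$, Lemmas \ref{comm}--\ref{comm2}, Young's inequality and (H2)) yields, for a.a.\ $s\in(0,t_0-t_1)$,
\[
\int_{B_r}\varphi\psi^2\tilde u^{2q}\,dx + \frac{c\nu}{q}\, l\ast\int_{B_r}\varphi\psi^2|D\tilde u^q|^2\,dx \le l\ast F_q(s),
\]
with $F_q$ the natural analogue of (\ref{defeF}) in which the factor $\tilde u$ is replaced by $\tilde u^{2q-1}$. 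The one-dimensional parabolic embedding from the Proposition in Section 2, applied to $\psi\tilde u^q$, together with Young's inequality for convolutions and Lemma \ref{scaling}, delivers the iteration estimate (\ref{mositer1}) required by Lemma \ref{moserit1} with $\beta_1=\beta_2=2$: for a suitable $\kappa>1$ (determined by the embedding exponents in the Proposition) and every $\delta\le\sigma'<\sigma\le 1$, $\gamma\ge 1$,
\[
\|u\|_{L_{2\gamma\kappa,2\gamma\kappa}(U_{\sigma'})}\le\Big(\frac{C(1+\gamma)^{a}}{(\sigma-\sigma')^{a}}\Big)^{1/\gamma}\|u\|_{L_{2\gamma,2\gamma}(U_{\sigma})}+G_{\sigma,\sigma'},
\]
where $G_{\sigma,\sigma'}$ collects the contributions of $u_0$ and of the history term and carries an extra factor $(\sigma-\sigma')^{-\omega}$ that comes from assumption (\ref{asholder}) used to compare $-\dot k(t_0-\tau-\theta\eta\vdr)$ with $-\dot k(t_0-\tau)$ uniformly in $\tau\in[0,t_0-\sigma'\eta\vdr]$. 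Lemma \ref{moserit1} with $\bar p=1$ then yields, for some $p\ge 1$,
\[
\esup_{U_{\delta'}} u\le \frac{C}{(\sigma-\sigma')^{\tau_0}}\|u\|_{L_{2p,2p}(U_{\sigma})}+ \text{(bad terms)},\qquad\delta\le\sigma'<\sigma\le 1,
\]
with $\tau_0=\tau_0(\omega)$ and bad terms of exactly the form appearing on the right-hand side of the theorem.

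Finally, the elementary interpolation
\[
\int_{U_\sigma} u^{2p}\,dxdt\le |B_{r\sigma}|\,(\esup_{U_\sigma}u)^{2p-1}\int_{t_0-\sigma\eta\vdr}^{t_0}\sup_{x\in B_{r\sigma}}u\,dt
\]
together with Young's inequality brings the above bound to the form $f(\sigma')\le \frac{1}{2}f(\sigma)+C(\sigma-\sigma')^{-\tau_0}\frac{1}{\vdr}\int\sup_x u\,dt+\text{(bad)}$, with $f(\sigma):=\esup_{U_\sigma}u$. An application of Lemma \ref{geometric} then absorbs the $\frac{1}{2}f(\sigma)$ term and produces the claimed estimate. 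The principal technical obstacle is the propagation of the initial-data and history contributions through the Moser iteration: the factor $\tilde u^{2q-1}$ on the bad side must be split as $\tilde u\cdot(\esup \tilde u)^{2q-2}$ so that one factor of $\tilde u$ plays the role of the test function (as in Theorem \ref{subest2}, controlled by $\int\sup_x u$) while the remaining sup is re-absorbed into the left-hand side; this is precisely where assumption (\ref{asholder}), together with Lemmas \ref{kernels}--\ref{scaling}, is essential to keep the constants uniform in $q$.
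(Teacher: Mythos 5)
Your overall scheme (test with powers, Moser-iterate via Lemma \ref{moserit1}, then interpolate and absorb with Lemma \ref{geometric}) is close in spirit to the paper, but it has a genuine gap exactly at the point you yourself flag as the ``principal technical obstacle'': the treatment of the initial-data and history terms inside the iteration. Your iteration inequality has the form $\|u\|_{L_{2\gamma\kappa,2\gamma\kappa}(U_{\sigma'})}\le (\cdot)^{1/\gamma}\|u\|_{L_{2\gamma,2\gamma}(U_{\sigma})}+G_{\sigma,\sigma'}$, i.e.\ it carries an \emph{additive} inhomogeneity, whereas Lemma \ref{moserit1} requires the purely multiplicative estimate \eqref{mositer1}; it simply does not apply to an inequality with an extra additive term, and carrying such a term through infinitely many iteration steps (where the bad term enters at level $q$ multiplied by $\tilde u^{2q-1}$-type quantities, so its homogeneity in $u$ grows with $q$) is precisely the problem. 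Your proposed fix --- splitting $\tilde u^{2q-1}=\tilde u\,(\esup\tilde u)^{2q-2}$ and ``re-absorbing the sup into the left-hand side'' --- is circular as stated: the factor $(\esup_{U_\sigma}u)^{2q-2}$ lives on the larger cylinder and is exactly the quantity being estimated, and no argument is given that the resulting constants stay uniform in $q$ or that the additive structure you then feed into Lemma \ref{moserit1} and Lemma \ref{geometric} actually emerges. (A secondary unaddressed point: testing with $\psi^2\tilde u^{2q-1}$ and invoking $\esup_{U_\sigma}u<\infty$ in the absorption presupposes local boundedness, which is part of the conclusion; the paper handles this with the truncations $H_m$ and monotone convergence.)

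The paper's proof removes the obstacle by a different device: it perturbs the solution, setting $u_b=u+b$ with the constant $b= k(t_0-\sigma\eta\vdr)\,r\int_{B_{r\sigma}}u_0\,dx+r\int_0^{t_0-\sigma'\eta\vdr}[-\dot k(t_0-\tau)]\int_{B_{r\sigma}}u\,dx\,d\tau$. Since $\tilde u_b\ge b$, in the energy estimate the $u_0$- and history-source terms tested against $\psi^2\tilde u_{b,m}^{\beta-1}\tilde u_b$ are bounded by $w^2/b$ times their coefficients (with $w=\tilde u_{b,m}^{(\beta-1)/2}\tilde u_b$), and by the choice of $b$ (together with monotonicity/convexity of $k$ and \eqref{asholder}) these quotients are at most $c(\eta)\,r^{-1}(\rho-\rho')^{-\omega}$. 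Thus the bad terms become multiplicative in $w$, the iteration for $u_b$ is homogeneous, and Lemma \ref{moserit1} applies directly --- moreover in the mixed norms $L_{\gamma,\infty}$ ($\beta_1=1$, $\beta_2=\infty$, via the one-dimensional embedding \eqref{sobolev} with $b=\infty$), so that taking $\gamma=1$ yields $\esup u\le\esup u_b\le C(\sigma-\sigma')^{-\tau_0}\big(\frac{1}{\vdr}\int\sup_x u\,dt+cb\big)$, which is the statement; no final interpolation/absorption step and no use of Lemma \ref{geometric} is needed in this theorem. If you want to salvage your route, the $b$-perturbation (or an equivalent mechanism converting the additive tail into a multiplicative one) is the missing idea; without it the proposal does not go through.
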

\noindent {\em Proof.}
We perturb the function $u$ by a memory term as follows. For $r \in (0,r_0)$ we set $u_b = u + b$ where
\[
b = k(t_0-\sigma\eta\vdr) r\int_{B_{r\sigma}} u_0 dx + r\int_{0}^{t_0-\sigma'\eta\vdr}[-\dot{k}(t_0-\tau)]\int_{B_{r\sigma}}u(\tau,x)\,dxd\tau.
\]
We proceed exactly as in the proof of the previous theorem and with the same notation. For any $\xi \in (0,1]$, we denote $V_\xi = U_{\xi\sigma}$.
\nic{
Again, to abbreviate the notation we denote $B_{r} := B_{r}(x_{0})$.
Let us discuss $r \in (0,r^{*})$, where $r^{*}>0$ comes from Lemma \ref{scaling}. Fix $\sigma'$ and $\sigma$ such that $\delta\le \sigma'<\sigma\le
1$, for $\rho\in (0,1]$ we set
$V_\rho=U_{\rho\sigma}$. For any $\frac{\sigma'}{\sigma}<\rho'<\rho\le 1$, we introduce
$t_{1} = t_{0}-\rho\sigma\eta\vdr$ and $t_{2} = t_{0}-\rho'\sigma\eta\vdr$. Then $0\le t_1<t_2<t_0$. We
shift the time by setting ${s}=t-t_1$ and  $\tilde{f}(s)=f(s+t_1)$, $s\in
(0,t_0-t_1)$, for functions $f$ defined on $(t_1,t_0)$.
}The shifted perturbation of $u$ satisfies
 for a.a. $s\in
(0,t_0-t_1)$
\[
\int_{B_r} \Big(v \partial_s(k_{n}\ast \tilde{u}_{b})+\big(h_n\ast
[\tilde{A}D\tilde{u}_b]|Dv\big)\Big)\,dx
\]
\begin{equation} \label{sup0a}
\leq   \int_{B_r} u_{0} v \,dx\, (h_n*k(\cdot+t_1))+ b\,k_{n}(s)\int_{B_{r}}vdx  + \int_{B_r}h_n*\left(\int_{0}^{t_{1}}[-\dot{k}(\cdot+t_1-\tau)]u(\tau,x)\,d\tau\right) v \,dx ,
\end{equation}
for any nonnegative function $v\in \oH^1_2(B_r)$.

We would like to test the equation with powers of the form $\tilde{u}_b^{\beta}$ with $\beta \geq 1$. In order to ensure that the functions to test with are admissible we introduce the truncation  $\tilde{u}_{b,m} = \min\{\tilde{u}_b,b+m\}$. For $m \geq 1$ we define the nonnegative function
\begin{align*}
H_m(y) =\left\{ \begin{array}{l@{\;:\;}l}
 \frac{y^{\beta+1}}{\beta+1} & 0\leq  y \leq m+b \\
\frac{(m+b)^{\beta+1}}{\beta+1} + \frac{(m+b)^{\beta-1}}{2}\,(y^2-(m+b)^2) & y > m+b.
\end{array} \right.
\end{align*}
$H_m \in C^{1}(\mathbb{R}_{+})$ and $H'_m(y)=y^{\beta}$ if $y \leq m+b$, and $H'_m(y)=(m+b)^{\beta-1}y$ for $y > m+b$. Thus $H_m$ is convex. Furthermore,  we have
\eqq{
H'_m(\tilde{u}_b) =\tilde{u}_{b,m}^{\beta-1}\tilde{u}_b, \hd \hd DH'_m(\tilde{u}_b)= \tilde{u}_{b,m}^{\beta-1}[(\beta-1)D\tilde{u}_{b,m} + D\tilde{u}_{b} ],
}{new2}
and the fundamental identity implies that for a.a. $(s,x)\in (0,t_0-t_1)\times B_r$
\begin{align}
H_m'(\tilde{u}_b)\partial_{s}(k_{n}\ast \tilde{u}_{b}) & \ge \partial_{s}
(k_{n}\ast H_m(\tilde{u}_{b})). \label{supnew1}
\end{align}
Let $\psi$ be the function from the proof of Theorem \ref{subest2}. Now we may choose $v=\psi^2H'_m(\tilde{u}_b)$ in (\ref{sup0a}) and
using (\ref{new2}) and  (\ref{supnew1}) in (\ref{sup0a}) we obtain for
a.a. $s\in (0,t_0-t_1)$
\[
\int_{B_{r\sigma}}\psi^2\partial_{s}
\big(k_{n}\ast H_m(\tilde{u}_{b})\big)\,dx+\int_{B_{r\sigma}}\big(h_n\ast
[\tilde{A}D\tilde{u}_b]|\psi^2 \tilde{u}_{b,m}^{\beta-1}[(\beta-1)D\tilde{u}_{b,m} + D\tilde{u}_{b}]
\big)\,dx \]
\[
\le -2\int_{B_{r\sigma}}\big(h_n\ast [\tilde{A}D\tilde{u}_b]|\psi D\psi
\,\tilde{u}_{b,m}^{\beta-1}\tilde{u}_b\big)\,dx+ k_{n} b\int_{B_{r\sigma}} \psi^{2}\tilde{u}_{b,m}^{\beta-1}\tilde{u}_{b} dx
\]
\eqq{
+\int_{B_{r\sigma}}\psi^2\tilde{u}_{b,m}^{\beta-1}\tilde{u}_{b}u_{0}\,dx\,(h_n*k(\cdot+t_1))+ \int_{B_{r\sigma}}\tilde{u}_{b,m}^{\beta-1}\tilde{u}_{b} \psi^2 h_n*\left(\int_{0}^{t_{1}}[-\dot{k}(s+t_1-\tau)]u(\tau,x)d\tau\right) dx.
}{supnew2}
Now, let $\varphi\in C^1([0,t_0-t_1])$ be as in the proof of Theorem \ref{subest2}.
Multiplying (\ref{supnew2}) by $\varphi(s)$
and convolving the resulting inequality with $l$ yields
\begin{align}
\int_{B_{r\sigma}} & l\ast
\big(\varphi\partial_{s}(k_{n}\ast
[\psi^2H_m(\tilde{u}_{b})])\big)\,dx+\,l\ast\int_{B_{r\sigma}}\big(h_n\ast
[\tilde{A}D\tilde{u}_b]|\psi^2 \tilde{u}_{b,m}^{\beta-1}[(\beta-1)D\tilde{u}_{b,m} + D\tilde{u}_{b}]\big)\varphi\,dx \nonumber\\
\le \, & \,-2\,l\ast\int_{B_{r\sigma}}\big(h_n\ast
[\tilde{A}D\tilde{u}_b]|\psi D\psi
\,\tilde{u}_{b,m}^{\beta-1}\tilde{u}_{b} \big)\varphi\,dx+\,l\ast \big(\int_{B_{r\sigma}}\psi^2\tilde{u}_{b,m}^{\beta-1}\tilde{u}_{b} u_{0}
\,dx\, (h_n*k(\cdot+t_1))\varphi\big)\nonumber\\
& +b\, l*\big(\varphi k_{n}\int_{B_{r\sigma}} \psi^{2}\tilde{u}_{b,m}^{\beta-1}\tilde{u}_{b}  dx\big) + l*\int_{B_{r\sigma}}\varphi \tilde{u}_{b,m}^{\beta-1}\tilde{u}_{b} \psi^2h_n*\left(\int_{0}^{t_{1}}[-\dot{k}(\cdot+t_1-\tau)]u(\tau,x)d\tau\right)dx, \label{supnew3}
\end{align}
for a.a. $s\in (0,t_0-t_1)$.

Applying the same reasoning as the one carried in (\ref{sup4}) - (\ref{sup6}), passing to the limit with
$n$ (with an appropriate subsequence, if
necessary), we arrive at
\begin{align}
& \int_{B_{r\sigma}}\varphi\psi^2H_m(\tilde{u}_{b})\,dx+
\,l\ast\int_{B_{r\sigma}}\big(\tilde{A}D\tilde{u}_{b}|\psi^2
\tilde{u}_{b,m}^{\beta-1}[(\beta-1)D\tilde{u}_{b,m} + D\tilde{u}_{b}]\big)\varphi\,dx\nonumber\\
\le \, &
\,-2\,l\ast\int_{B_{r\sigma}}\big(\tilde{A}D\tilde{u}_{b}|\psi
D\psi
\,\tilde{u}_{b,m}^{\beta-1}\tilde{u}_{b}\big)\varphi\,dx+\,l\ast\big(\int_{B_{r\sigma}}\psi^2\tilde{u}_{b,m}^{\beta-1}\tilde{u}_{b}u_{0}\, dx\,
k(\cdot+t_1)\varphi\big) \nonumber\\
& +\int_0^s l(s-\tau)\dot{\varphi}(\tau)
\big(k\ast
\int_{B_{r\sigma}}\psi^2H_m(\tilde{u}_{b})\,dx\big)(\tau)\,d\tau +b \int_{0}^{s}l(s-p)k(p)\vf(p)\int_{B_{r \sigma}} \psi^{2}\tilde{u}_{b,m}^{\beta-1}\tilde{u}_{b}dx dp \nonumber\\
& + l*\int_{B_{r\sigma}}\varphi \tilde{u}_{b,m}^{\beta-1}\tilde{u}_{b} \psi^2 \left(\int_{0}^{t_{1}}[-\dot{k}(\cdot+t_1-\tau)]u(\tau,x)d\tau\right)dx,
\;\;\;\;\mbox{a.a.}\;s\in(0,t_0-t_1).
\label{supnew6}
\end{align}

Assumption (H2) leads to
\[
\big(\tilde{A}D\tilde{u}_{b}|\psi^2
\tilde{u}_{b,m}^{\beta-1}[(\beta-1)D\tilde{u}_{b,m} + D\tilde{u}_{b}]\big) \geq (\beta-1)\nu \abs{D\tilde{u}_{b,m}}^2 u_{b,m}^{\beta-1}\psi^2 + \nu \abs{D\tilde{u}_{b}}^2 u_{b,m}^{\beta-1}\psi^2,
\]

while (H1) and Young's inequality give
\begin{align}
2\big|\big(\tilde{A}D\tilde{u}_{b}|\psi D\psi
\,\tilde{u}_{b,m}^{\beta-1}\tilde{u}_{b}\big)\big| & \le 2\Lambda\psi|D\psi|\,|D
\tilde{u}_{b}|\tilde{u}_{b,m}^{\beta-1} \tilde{u}_{b} \nonumber\\
& \le \,\frac{\nu}{2}\, \psi^2 |D \tilde{u}_{b}|^2
\tilde{u}_{b,m}^{\beta-1}+\,\frac{1}{\nu }\,\Lambda^2
|D\psi|^2 \tilde{u}_{b,m}^{\beta-1}\tilde{u}_b^2 . \label{supnew8}
\end{align}
Combining the previous estimates, we obtain
\begin{align}
& \int_{B_{r\sigma}}\varphi\psi^2H_m(\tilde{u}_{b})\,dx+
\nu \,l\ast\int_{B_{r\sigma}}\big( (\beta-1) \abs{D\tilde{u}_{b,m}}^2 u_{b,m}^{\beta-1}\psi^2 + \frac{1}{2} \abs{D\tilde{u}_{b}}^2 u_{b,m}^{\beta-1}\psi^2\big)\varphi\,dx\nonumber\\
\le \, &
\,\frac{2\Lambda^2}{\nu}\,l\ast\int_{B_{r\sigma}}\big(|D\psi|^2 \tilde{u}_{b,m}^{\beta-1}\tilde{u}_b^2\big)\varphi\,dx+\,l\ast \big(\int_{B_{r\sigma}}\psi^2\tilde{u}_{b,m}^{\beta-1}\tilde{u}_{b}u_{0}\, dx\,
k(\cdot+t_1)\varphi \big)\nonumber\\
& +\int_0^s l(s-\tau)\dot{\varphi}(\tau)
\big(k_{n}\ast
\int_{B_{r\sigma}}\psi^2H_m(\tilde{u}_{b})\,dx\big)(\tau)\,d\tau +b \int_{0}^{s}l(s-p)k(p)\vf(p)\int_{B_{r \sigma}} \psi^{2}\tilde{u}_{b,m}^{\beta-1}\tilde{u}_{b}dx dp \nonumber\\
& + l*\int_{B_{r\sigma}}\varphi \tilde{u}_{b,m}^{\beta-1}\tilde{u}_{b} \psi^2\int_{0}^{t_{1}}[-\dot{k}(\cdot+t_1-\tau)]u(\tau,x)\,d\tau dx,
\;\;\;\;\mbox{a.a.}\;s\in(0,t_0-t_1).
\label{new10}
\end{align}
Putting $w=\tilde{u}_{b,m}^{\frac{\beta-1}{2}}\tilde{u}_b$, we have
\[
Dw=\frac{\beta-1}{2}\tilde{u}_{b,m}^{\frac{\beta-1}{2}} D\tilde{u}_{b,m} + \tilde{u}_{b,m}^{\frac{\beta-1}{2}}D\tilde{u}_b,
\]
\[ |Dw|^2 \leq \frac{(\beta-1)^2}{2}\tilde{u}_{b,m}^{\beta-1} |D\tilde{u}_{b,m}|^2 + 2\tilde{u}_{b,m}^{\beta-1}|D\tilde{u}_b|^2 \leq 4\beta\left[ (\beta-1) \abs{D\tilde{u}_{b,m}}^2 u_{b,m}^{\beta-1} + \frac{1}{2} \abs{D\tilde{u}_{b}}^2 u_{b,m}^{\beta-1}\right].
\]
Furthermore, a direct calculation shows that
\[
w^2 \leq (1+\beta)H_m(\tilde{u}_b) \leq (1+\beta) w^2.
\]
Applying these estimates in (\ref{new10}) we obtain
\begin{equation} \label{sup9a}
\int_{B_{r\sigma}}\varphi\psi^2w^2\,dx+\,\frac{\nu
(\beta+1)}{4\beta}\,l\ast\int_{B_{r\sigma}}\varphi \psi^2|Dw|^2\,dx
\le l\ast F,\quad\mbox{a.a.}\;s\in(0,t_0-t_1),
\end{equation}
where
\eqnsl{
 F(s) & =\,  \,\frac{2\Lambda^2(1+\beta)}{\nu }\,  \int_{B_{r\sigma}}
|D\psi|^2\varphi w^2\,dx\\
&\quad+(1+\beta) \varphi(s)k(s)\int_{B_{r \sigma}} \psi^{2}w^{2} dx
+(1+\beta)\dot{\varphi}(s)
\big(k\ast
\int_{B_{r\sigma}}\psi^2 w^{2}\,dx\big)(s) \\
&
\quad+ \frac{(1+\beta)}{b}\left[\,\int_{B_{r\sigma}}\psi^2 w^{2} u_{0}\, dx\,
k(s+t_1)\varphi + \int_{B_{r\sigma}}\varphi w^{2} \psi^2\int_{0}^{t_{1}}[-\dot{k}(s+t_1-\tau)]u(\tau,x)d\tau dx\right].
}{defeFa}

Then by Young's inequality for convolutions and the properties of
$\varphi$ we infer that for all $1< p\leq p_0$, where $p_0$ comes from the assumption (\ref{ak1}),
\begin{equation} \label{sup10}
\Big(\int_{t_2-t_1}^{t_0-t_1} (\int_{B_{r\sigma}}
[\psi(x)w(s,x)]^2\,dx)^p\,ds\Big)^{1/p} \,\le
\|l\|_{L_p([0,t_0-t_1])} \int_0^{t_0-t_1} \!\!\!\!F(s)\,ds.
\end{equation}
We choose any of these $p$ and fix it.
Furthermore, the estimate (\ref{sup9a}) gives
\begin{equation} \label{sup12}
\int_{t_2-t_1}^{t_0-t_1}\int_{B_{r\sigma}}\psi^2|Dw|^2\,dx\,ds \le
\,\frac{ 4\beta}{\nu (\beta+1)}\,\int_0^{t_0-t_1} \!\!\!\!F(s)\,ds.
\end{equation}
\nic{
Let us estimate $\int_0^{t_0-t_1}F(s)ds$.
Next, we note that, by Lemma \ref{kernels} and Lemma \ref{ba} we have
\[
\vf(s)k(s) \leq k\left(\frac{t_{2}-t_{1}}{2}\right) \leq k_1\left(\jd (\rho-\rho') \sigma \eta \vdr  \right) \leq \frac{2 \max\{1,\eta^{-1}\}}{\sigma (\rho-\rho') } k_{1}(\vdr).
\]
Thus, we obtain
\eqq{
\vf(s)k(s)\int_{B_{r\sigma}}\psi^{2}w^{2}dx \leq c(\eta,\delta)(\rho-\rho')^{-1}k_1(\vdr) \int_{B_{\rho r\sigma}}w^{2}dx.
}{fa}
Further,
\[
\dot{\vf}(s)(k*\int_{B_{r\sigma}}\psi^{2}w^{2}dx)(s)\leq \frac{4}{\sigma\eta(\rho-\rho')\vdr}(k*\int_{B_{\rho r\sigma }}w^{2}dx)(s)
\]
and consequently, by Young inequality for convolution we get
\[
\int_{0}^{t_{0}-t_{1}}\dot{\vf}\cdot k*\int_{B_{\rho r\sigma }}w^{2}dxd\tau \leq  \frac{4}{\sigma\eta(\rho-\rho')\vdr} (1*k*\int_{B_{\rho r\sigma }}w^{2}dx)(t_0-t_1)
\]
\[
 \leq \frac{4}{\sigma\eta(\rho-\rho')\vdr} (1*k)(\rho\sigma\eta\vdr)\int_{0}^{t_{0}-t_{1}}\int_{B_{\rho r\sigma }}w^{2}dxd\tau
\]
\[
\leq \frac{4}{(\rho-\rho')}  \frac{(1*k)(\sigma\eta \vdr)}{\sigma\eta \vdr}\int_{0}^{t_{0}-t_{1}}\int_{B_{\rho r\sigma }}w^{2}dxd\tau
\leq  \frac{4 \overline{c} \max\{1,\eta^{-1}\}}{\sigma(\rho-\rho')} \ki(\vdr)
 \int_{0}^{t_{0}-t_{1}}\int_{B_{\rho r\sigma }}w^{2}dxd\tau,
\]
where in the last estimate we used the fact that $k$ is nonincreasing and Lemma \ref{kernels}.
Then,
\eqq{
\int_{0}^{t_{0}-t_{1}}\int_{B_{r\sigma}}\abs{D \psi}^{2}w^{2}dxds \leq \frac{4}{r^{2}\sigma^{2}(\rho-\rho')^{2}}\int_{0}^{t_{0}-t_{1}}\int_{B_{\rho r\sigma}}w^{2}dxds.
}{pierw1}}
Similarly as in (\ref{uz14}) and (\ref{uz15}) we have
\[
\frac{1}{b}\int_{0}^{t_0-t_1} \int_{B_{r\sigma}}\psi^2 w^{2}(s) u_{0}\, dx\, k(s+t_1)\varphi(s)ds \leq \frac{k(t_1)}{b}\int_{B_{r\sigma}}u_{0}dx \int_{0}^{t_0-t_1}  \norm{\psi w}^{2}_{L_{\infty}(B_{r\sigma})}ds
\]
\[
\leq
r^{-1}\int_{0}^{t_0-t_1} \norm{ \psi w}^{2}_{L_{\infty}(B_{r\sigma})}ds
\]
\nic{We come to the last term
We note that for $s\in (\frac{t_2-t_1}{2},t_0-t_1)$ we have
\[
-\dot{k}(s+t_1-\tau) =\frac{\al}{\Gamma(1-\al)}(s+t_1-\tau)^{-\al-1} \leq \frac{\al}{\Gamma(1-\al)} (t_0-\tau - \sigma\eta\vdr\frac{\rho+\rho'}{2})^{-\al-1}
\]
\[
=\frac{\al}{\Gamma(1-\al)}(t_0-\tau)^{-\al-1}(1-\frac{\sigma\eta\vdr(\rho+\rho')}{2(t_0-\tau)})^{-\al-1} \leq  -\dot{k}(t_0-\tau)(1-\frac{\rho+\rho'}{2\rho})^{-\al-1} \leq \frac{4}{(\rho'-\rho)^{2}}[-\dot{k}(t_0-\tau)]
\]
Hence, using $\rho \geq \frac{\sigma'}{\sigma}$ we have} and
\[
\frac{1}{b} \int_{0}^{t_0-t_1}\int_{B_r\sigma}\varphi(s) w^{2}(s) \psi^2\int_{0}^{t_{1}}[-\dot{k}(s+t_1-\tau)]u(\tau,x)\,d\tau dxds
\]
\[
\leq \frac{1}{b}\frac{c(\eta)}{(\rho'-\rho)^{\omega}} \int_{0}^{t_0-t_1}\int_{B_r\sigma}\varphi(s) w^{2}(s) \psi^2 \int_{0}^{t_0-\rho\sigma\eta\vdr}[-\dot{k}(t_0-\tau)]u(\tau,x)\,d\tau dxds
\]
\[
\leq \frac{1}{b}\frac{c(\eta)}{(\rho'-\rho)^{\omega}} \int_{0}^{t_0-t_1}\norm{\psi w }^2_{L_{\infty}(B_{r\sigma })} ds\int_{0}^{t_0-\sigma'\eta\vdr}[-\dot{k}(t_0-\tau)]\int_{B_{r\sigma}}u(\tau,x)\,dxd\tau.
\]
Thus, from the definition of $b$ we get
\[
\frac{1}{b} \int_{0}^{t_0-t_1}\int_{B_r\sigma}\varphi w^{2} \psi^2\int_{0}^{t_{1}}[-\dot{k}(s+t_1-\tau)]u(\tau,x)d\tau dx ds  \leq \frac{1}{r}\frac{c(\eta)}{(\rho'-\rho)^{\omega}}\int_{0}^{t_0-t_1}\norm{\psi w}_{L_{\infty}(B_{r\sigma})}^{2}ds.
\]

Estimating the remaining terms in (\ref{defeFa}) as in the proof of Theorem \ref{subest2}, we arrive at
\begin{align*}
\int_{0}^{t_0-t_1}\!\!\!\!\! F(s)\,ds  \leq c(\eta,\delta,\Lambda,\nu,\overline{c}) \frac{2+\beta}{(\rho-\rho')^{\tilde{\omega}}} \left(\frac{1}{r^{2}}\int_{0}^{t_0-t_1}\!\!\int_{B_{ r\sigma }}\!\!\!\psi^2 w^{2}dxd\tau+ r^{-1}\norm{\psi w}_{L_{2}((0,t_0-t_1);L_{\infty}(B_{ r\sigma }))}^{2} \right),
\end{align*}
where we denoted $\tilde{\omega} = \max\{2,\omega\}$.
Hence
\eqq{
\int_{0}^{t_0-t_1}F(s)\,ds  \leq c(\eta,\delta,\Lambda,\nu,\overline{c}) \frac{2+\beta}{(\rho-\rho')^{\tilde{\omega}}} r^{-1}\norm{\psi w}_{L_{2}((0,t_0-t_1);L_{\infty}(B_{ r\sigma }))}^{2}.
}{estiF1}
Combining (\ref{sup10}) and   (\ref{estiF1}) we obtain
\[
\norm{\psi w}_{L_{2p}((t_{2}-t_{1},t_{0}-t_{1});L_{2}(B_{r\sigma}))} \leq \norm{l}_{L_{p}((0,t_{0}-t_{1}))}^{\frac{1}{2}}\left(\int_{0}^{t_{0}-t_{1}}F(s)ds\right)^{\frac{1}{2}}
\]
\[
\leq C(\eta,\delta,\Lambda, \nu,\overline{c})\|l\|_{L_{p}((0,\rho\sigma\eta\vdr))}^{\frac{1}{2}}\frac{2+\beta}{(\rho-\rho')^{\tilde{\omega}/2}} r^{-1/2}\norm{\psi w}_{L_{2}((0,t_0-t_1);L_{\infty}(B_{r\sigma }))}.
\]
Since $l$ is monotone, we may estimate
\eqq{
\norm{l}^{p}_{L_{p}((0,\rho\sigma\eta\vdr))} \leq \max\{1,\eta\} \norm{l}^p_{L_{p}((0,\vdr))} \leq c(\eta)r^{2p}(\vdr)^{1-p},
}{estiPhieta}
where in the second estimate we applied Lemma \ref{scaling}.
Therefore, we have
\eqnsl{ & \norm{\psi w}_{L_{2p}((t_{2}-t_{1},t_{0}-t_{1});L_{2}(B_{r\sigma}))}   \\
& \leq C(\eta,\delta,\Lambda, \nu,p,\overline{c})  \frac{2+\beta}{(\rho-\rho')^{\tilde{\omega}/2}}r^{1/2}(\vdr)^{-\frac{1}{2p'}}\norm{\psi w}_{L_{2}((0,t_0-t_1);L_{\infty}(B_{r \sigma }))}.}{estiwwl2p}
Furthermore, from (\ref{sup12}),  (\ref{estiF1})  and
\[
\int_{t_2-t_1}^{t_0-t_1}\int_{B_{r\sigma}} |D(\psi w)|^2\,dx\,ds\le
2\int_{t_2-t_1}^{t_0-t_1}\int_{B_{r\sigma}}
\big(\psi^2|Dw|^2+|D\psi|^2w^2\big)\,dx\,ds
\]
we infer that
\eqnsl{ &
\norm{D(\psi w)}_{L_{2}((t_{2}-t_{1},t_{0}-t_{1});L_{2}(B_{r\sigma}))} \\
&\le
C(p,\eta,\Lambda, \delta,\nu,\overline{c})\frac{2+\beta}{(\rho-\rho')^{\tilde{\omega}/2}}r^{-1/2}\norm{w}_{L_{2}((0,t_0-t_1);L_{\infty}(B_{r \rho\sigma }))}.
}{estiDpsiw}

Let us denote
\eqq{
\kappa = 1+ \frac{1}{2p'-1}.
}{defkappatheta}
Applying the embedding (\ref{sobolev}) with $a = 2\kappa$ and $b = \infty$ we obtain
\eqnsl{
& \norm{\psi w}_{L_{2\kappa}((t_{2}-t_{1},t_{0}-t_{1});L_{\infty}(B_{r\sigma}))}   \\
&\quad \quad\leq C \norm{D(\psi w)}^{\frac{1}{2}}_{L_{2}((t_{2}-t_{1},t_{0}-t_{1});L_{2}(B_{r\sigma}))}  \norm{\psi w}_{L_{2p}((t_{2}-t_{1},t_{0}-t_{1});L_{2}(B_{r\sigma}))}^{\frac{1}{2}}.
}{interapara}
Using in the above inequality the estimates (\ref{estiwwl2p})  and (\ref{estiDpsiw}), we arrive at
\eqq{
\norm{\psi w}_{L_{2\kappa}((t_{2}-t_{1},t_{0}-t_{1});
L_{\infty}(B_{r\sigma}))}
 \leq C(p,\eta,\Lambda, \delta,\nu,\overline{c})(\vdr)^{-\frac{1}{4p'}}\frac{2+\beta}{(\rho-\rho')^{\tilde{\omega}/2}}\norm{w}_{L_{2}((0,t_0-t_1);L_{\infty}(B_{\rho r\sigma }))}.
}{mainMoser1}

Now recall that $w = \tilde{u}_{b,m}^{\frac{\beta-1}{2}} \tilde{u}_b$. We may pass to the limit with $m$ in (\ref{mainMoser1}), using the monotone convergence theorem, the fact that $u \in L_{2}((0,T);L_{\infty}(\Omega))$ and Lemma \ref{moserit1}.
Then, denoting  $\gamma=1+\beta$ and employing  properties of $\psi$ we get
\[
\| u_b\|_{L_{ \gamma \kappa , \infty}(V_{\rho'})} \leq \left(\frac{C^{2}(1+\gamma)^{2}}{(\rho-\rho')^{\tilde{\omega}}(\vdr)^{\frac{1}{2p'}}}\right)^{\frac{1}{\gamma}}\| u_b\|_{L_{ \gamma ,\infty}(V_{\rho})}.
\]
We note that $\kappa>1$ and we apply the  Moser lemma (Lemma~\ref{moserit1}) with $\beta_1 = 1$,  $\beta_2=\infty$ to get
\eqq{
\esssup_{V_{\vsi}}u_b\leq \left(\frac{M_{0}}{(1-\vsi)^{\frac{\tilde{\omega}\kappa}{\kappa-1}}(\vdr)^{\frac{\kappa}{2p'(\kappa-1)}}}\right)^{\frac{1}{\gamma}}\| u_b\|_{L_{ \gamma  ,\infty}(V_{1})}, \hd \gamma\in (0,1], \hd \vsi \in \big[\frac{\sigma'}{\sigma},1\big),
}{zMoseranaub}
where $M_{0}=M_{0}(p ,\eta, \Lambda, \delta, \nu ,\overline{c})$.
Since $\kappa/(\kappa-1) = 2p'$
 we obtain
\[
\esssup_{V_{\vsi}}u_b\leq \left(\frac{M_{0}}{(1-\vsi)^{\frac{\tilde{\omega}\kappa}{\kappa-1}}\vdr}\right)^{\frac{1}{\gamma}}  \| u_b \|_{L_{ \gamma  ,\infty}(V_{1})}, \hd \gamma\in (0,1], \hd
\vsi \in \big[\frac{\sigma'}{\sigma},1\big).
\]
Thus, if we take $\gamma=1$,
$\vsi=\sigma'/\sigma$ and notice that
\[ \frac{1}{1-\vsi}\,=\,\frac{\sigma}{\sigma-\sigma'}\,\le
\frac{1}{\sigma-\sigma'},\]
then we obtain for $r \leq r^{*}$
\[
\esup_{U_{\sigma'}}{u_b} \le
\frac{M_0 }{(\sigma-\sigma')^{\tau_0}}\frac{1}{\vdr}
\int_{t_0-\sigma\eta\vdr}^{t_0}\sup_{x\in B_{r\sigma }}u_b(t,x)dt,
\]
which finishes the proof.
\begin{satz}\label{subestimate}
Let $T>0$ and $\Omega\subset \iR$ be a bounded
domain. Suppose the assumptions (H1)--(H3) and (\ref{pc}), (\ref{ak1}), (\ref{asholder}) are satisfied. Let
$\eta>0$ and $\delta\in (0,1)$ be fixed. Then there exists $r^{*}>0$ such that for any $0<r \leq r^{*}$, any $t_0\in(0,T]$
 with $t_0-\eta \vdr \ge 0$, any ball
$B_r(x_{0})\subset\Omega$, and any weak subsolution $u\ge 0$ of (\ref{MProb}) in $(0,t_0)\times B_r(x_{0})$ with $u_0\ge 0$
in $B_r(x_{0})$, there holds
\[
\esup_{U_{\sigma'}}{u} \le C(\sigma-\sigma')^{-2\tau_1}\frac{1}{r\vdr}  \int_{t_0-\sigma\eta\vdr}^{t_0} \int_{B_{r\sigma }} u\, dxdt
\]
\[
+\frac{C}{(\sigma-\sigma')^{\tau_1}} \left(k(t_0-\sigma\eta\vdr)r\int_{B{r\sigma }} u_0\, dx+r\int_{0}^{t_0-\sigma\eta\vdr}[-\dot{k}(t_0-\tau)]\int_{B_{r\sigma}}u(\tau,x)\,dxd\tau \right),
\]
where $U_\sigma=(t_0-\sigma\eta \vdr,t_0)\times B_{r\sigma }(x_0)$,
$0<\delta\leq \sigma'<\sigma\le 1$, $C=C(\nu,\Lambda,\delta,\eta,\overline{c},c_0)$ and
$\tau_1=\tau_{1}(\omega)>0$
\end{satz}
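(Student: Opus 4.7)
The plan is to derive Theorem~\ref{subestimate} by chaining Theorem~\ref{subest1} with Theorem~\ref{subest2} and then trading an $L^{2}$--bound on $u$ for an $L^{1}$--bound via the elementary inequality $u^{2}\le u\cdot \esup u$ and Young's inequality, closing the argument by the geometric iteration lemma \ref{geometric}.

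Fix $\delta\le\sigma'<\sigma\le 1$ and set $\sigma''=\tfrac{\sigma+\sigma'}{2}$. I would first apply Theorem~\ref{subest1} with the pair $(\sigma',\sigma'')$ to get
\[
\esup_{U_{\sigma'}}u\le\frac{C}{(\sigma-\sigma')^{\tau_{0}}}\Bigl[\frac{1}{\vdr}\int_{t_0-\sigma''\eta\vdr}^{t_0}\!\sup_{B_{r\sigma''}}u\,dt+r\,\mathcal{T}\Bigr],
\]
where $\mathcal{T}$ abbreviates the initial-data and history tail; by monotonicity of $k$ and $B_{r\sigma''}\subset B_{r\sigma}$ these contributions are dominated by the analogous expressions at the outer radius $r\sigma$ appearing in the statement. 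I would then estimate the $\int\!\sup$--term by Theorem~\ref{subest2} applied to the pair $(\sigma'',\sigma)$, which bounds it by
\[
\frac{C}{\sigma-\sigma'}\Bigl(\frac{1}{r\vdr}\int_{U_\sigma}u^{2}\,dxdt\Bigr)^{1/2}+\frac{Cr}{(\sigma-\sigma')^{\omega}}\,\mathcal{T}.
\]

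The decisive step is then to write
\[
\Bigl(\frac{1}{r\vdr}\int_{U_\sigma}u^{2}\,dxdt\Bigr)^{1/2}\le\bigl(\esup_{U_\sigma}u\bigr)^{1/2}\Bigl(\frac{1}{r\vdr}\int_{U_\sigma}u\,dxdt\Bigr)^{1/2},
\]
and to split via Young's inequality $(ab)^{1/2}\le\theta a+\tfrac{1}{4\theta}b$, with $\theta>0$ taken proportional to a suitable positive power of $\sigma-\sigma'$, so that the coefficient of $\esup_{U_\sigma}u$ in the resulting estimate is at most $\tfrac12$. Setting $f(\sigma):=\esup_{U_\sigma}u$, this chain produces
\[
f(\sigma')\le\tfrac12 f(\sigma)+\frac{C}{(\sigma-\sigma')^{2\tau_{1}}}\,\frac{1}{r\vdr}\int_{U_\sigma}u\,dxdt+\frac{C}{(\sigma-\sigma')^{\tau_{1}}}\,r\,\mathcal{T},
\]
for some $\tau_{1}=\tau_{1}(\omega)>0$ encoding $\tau_{0}$, $\omega$ and the power of $\sigma-\sigma'$ absorbed in the Young step. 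Lemma~\ref{geometric}, applied to $f$ on $[\delta,1]$, then absorbs the $\tfrac12 f(\sigma)$ term and delivers the claimed estimate.

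The main obstacle, in my view, is the careful bookkeeping of the several negative powers of $\sigma-\sigma'$ that accumulate when chaining the two theorems and inserting the Young parameter: they must all be unified into a single exponent $\tau_{1}$, and the Young parameter $\theta$ has to be tuned simultaneously small enough to achieve absorption and large enough that the resulting power of $(\sigma-\sigma')^{-1}$ multiplying the $L^{1}$--integral is still handled by the geometric iteration lemma.
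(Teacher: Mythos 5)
Your overall route is the paper's: chain Theorem~\ref{subest1} (with $(\sigma',\tfrac{\sigma+\sigma'}{2})$) and Theorem~\ref{subest2} (with $(\tfrac{\sigma+\sigma'}{2},\sigma)$), replace the $L_2$-average by $(\esup u)^{1/2}$ times the $L_1$-average, absorb via Young's inequality, and close with Lemma~\ref{geometric}. However, there is one genuine gap in your treatment of the history term. Theorems~\ref{subest1} and~\ref{subest2} produce the history integral $\int_{0}^{t_0-\sigma'\eta\vdr}[-\dot{k}(t_0-\tau)]\int_{B_{r\sigma}}u\,dx\,d\tau$, whose time range is \emph{larger} than the range $[0,\,t_0-\sigma\eta\vdr]$ appearing in the claim (since $\sigma'<\sigma$), and moreover depends on the inner iteration parameter $\sigma'$. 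It therefore cannot be ``dominated by the analogous expression at the outer radius'' as you assert: the extra piece $\int_{t_0-\tilde\sigma\eta\vdr}^{t_0-\sigma'\eta\vdr}$ is a nonnegative quantity that must be dealt with, and if you simply bound the whole history integral by the largest admissible one ($\sigma'=\delta$) you end up proving a weaker statement than the one claimed, while if you leave it $\sigma'$-dependent the inhomogeneous term in the iteration inequality is not a constant and Lemma~\ref{geometric} does not apply as stated.

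The paper resolves this by fixing $\tilde\sigma\in[\sigma,1]$, splitting the history integral at $t_0-\tilde\sigma\eta\vdr$, and estimating the near-history piece using the monotonicity of $-\dot k$ (from convexity of $k$) by $[-\dot{k}(\sigma'\eta\vdr)]\int_{t_0-\tilde\sigma\eta\vdr}^{t_0}\int_{B_{r\tilde\sigma}}u$; then the first part of assumption (\ref{asholder}) together with Lemma~\ref{fi} and Lemma~\ref{ba} gives $-\dot{k}(\sigma'\eta\vdr)\le C(\delta,\eta,c_0)\,\vdr^{-1}r^{-2}$, so that this piece is absorbed into the $L_1$ space-time term $\frac{1}{r\vdr}\int\int u$ already present on the right-hand side. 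This is precisely why the constant in Theorem~\ref{subestimate} depends on $c_0$ while those of Theorems~\ref{subest1} and~\ref{subest2} do not; your proposal never invokes this part of (K3), which is the signal that the step is missing. Everything else in your outline (the Young absorption, the unification of the powers of $\sigma-\sigma'$ into $\tau_1$, and the need to know $u$ is locally bounded before applying Lemma~\ref{geometric} to $f(\sigma)=\esup_{U_\sigma}u$) matches the paper's argument.
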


\begin{proof}
Fix $\delta<\sigma'<\sigma <  1$ and apply Theorem  \ref{subest1} with parameters $\sigma',\frac{\sigma+\sigma'}{2}$ and then Theorem \ref{subest2} with parameters $\frac{\sigma'+\sigma}{2},\sigma$. Setting $\tau_1 = \tau_0+1$ we thus obtain
\[
\esup_{U_{\sigma'}}{u} \leq  \frac{C}{(\sigma-\sigma')^{\tau_1}} \left(\frac{1}{r\vdr}\int_{t_0-\sigma\eta\vdr}^{t_0} \int_{B_{r\sigma }} u^{2}dxdt\right)^{\frac{1}{2}}
\]
\eqq{
+\frac{C}{(\sigma-\sigma')^{\tau_1}} \left( k(t_0-\sigma\eta\vdr)r \int_{B_{r\sigma}} u_0 dx +r\int_{0}^{t_0-\sigma'\eta\vdr}[-\dot{k}(t_0-\tau)]\int_{B_{r\sigma}}u(\tau,x)dxd\tau\right).
}{supltwo}

By Young's inequality applied to (\ref{supltwo}) we further have
\[
\esup_{U_{\sigma'}}{u} \le \frac{1}{2}\esup_{U_{\sigma}}{u} +C(\sigma-\sigma')^{-2\tau_1}  \frac{1}{r\vdr}\int_{t_0-\sigma\eta\vdr}^{t_0} \int_{B_{r\sigma }} u\,dxdt
\]
\[
 +\frac{C}{(\sigma-\sigma')^{\tau_1}} \left( k(t_0-\sigma\eta\vdr)r \int_{B_{r\sigma}} u_0\, dx +r\int_{0}^{t_0-\sigma'\eta\vdr}[-\dot{k}(t_0-\tau)]\int_{B_{r\sigma}}u(\tau,x)\,dxd\tau\right).
\]
Let us take $\sigma
\leq \tilde{\sigma} \leq 1$. Then, by convexity of $k$ we may estimate
\[
\int_{0}^{t_0-\sigma'\eta\vdr}[-\dot{k}(t_0-\tau)]\int_{B_{r\sigma}}u(\tau,x)dxd\tau
\]
\[
\leq \int_{0}^{t_0-\tilde{\sigma}\eta\vdr}[-\dot{k}(t_0-\tau)]\int_{B_{r\tilde{\sigma}}}u(\tau,x)dxd\tau +
\int_{t_0-\tilde{\sigma}\eta\vdr}^{t_0-\sigma'\eta\vdr}[-\dot{k}(t_0-\tau)]\int_{B_{r\sigma}}u(\tau,x)dxd\tau
\]
\[
\leq \int_{0}^{t_0-\tilde{\sigma}\eta\vdr}[-\dot{k}(t_0-\tau)]\int_{B_{r\tilde{\sigma}}}u(\tau,x)dxd\tau + [-\dot{k}(\sigma'\eta\vdr)]
\int_{t_0-\tilde{\sigma}\eta\vdr}^{t_0}\int_{B_{r\tilde{\sigma}}}u(\tau,x)dxd\tau.
\]
Applying the assumption (\ref{asholder}) together with Lemma \ref{fi} and Lemma \ref{ba} we find that
\[
\int_{0}^{t_0-\sigma'\eta\vdr}[-\dot{k}(t_0-\tau)]\int_{B_{r\sigma}}u(\tau,x)dxd\tau
\]
\[
\leq \int_{0}^{t_0-\tilde{\sigma}\eta\vdr}[-\dot{k}(t_0-\tau)]\int_{B_{r\tilde{\sigma}}}u(\tau,x)dxd\tau +\frac{C(\delta,\eta,c_0)}{\vdr r^{2}} \int_{t_0-\tilde{\sigma}\eta\vdr}^{t_0}\int_{B_{r\tilde{\sigma}}}u(\tau,x)dxd\tau.
\]
Hence, we obtain that for every $0<\delta\leq \sigma'<\sigma \leq \tilde{\sigma} \leq 1$
\[
\esup_{U_{\sigma'}}{u} \le \frac{1}{2}\esup_{U_{\sigma}}{u} +\frac{C(\sigma-\sigma')^{-2\tau_1}}{\vdr r}  \int_{t_0-\tilde{\sigma}\eta\vdr}^{t_0} \int_{B_{\tilde{\sigma} r}} u\,dxdt
\]
\[
+\frac{C}{(\sigma-\sigma')^{\tau_1}} \left(k(t_0-\tilde{\sigma}\eta\vdr)r \int_{B_{\tilde{\sigma} r}} u_0 \,dx+r\int_{0}^{t_0-\tilde{\sigma}\eta\vdr}[-\dot{k}(t_0-\tau)]\int_{B_{r\tilde{\sigma}}}u(\tau,x)\,dxd\tau \right).
\]
From (\ref{supltwo}) we infer that $u$ is locally bounded in $\Omega_{T}$, hence applying Lemma \ref{geometric} with $f(\sigma) = \esup_{U_{\sigma}}{u}$  we obtain the claim.

\end{proof}
\begin{bemerk1}
{\em The assumption that the space dimension is one
is again crucial for using the interpolation inequality, cf.\ Remark \ref{RemA}. For this reason the subsolution estimates cannot be extended to higher dimensions.
}
\end{bemerk1}

Theorem \ref{strongHarnack} follows from Theorem \ref{imweakHarnack} applied together with Theorem \ref{subestimate} with appropriately modified cylinders.

${}$


\end{document}